\documentclass{article}

\usepackage{fancyhdr}
\pagestyle{headings}
\fancyfoot{}{}{}
\fancyhead[LO,RE]{\leftmark}
\fancyhead[LE,RO]{\thepage}

\rhead{\leftmark\\\rightmark}

\usepackage[utf8]{inputenc}
\usepackage[top=1.15in, bottom=1.15in, left=1.17in, right=1.17in]{geometry}
\usepackage{amsthm,dsfont, amsmath, amssymb, amscd, latexsym, multicol, verbatim, enumerate, graphicx,xy, color}
\usepackage{amstext,amsfonts,amssymb,amscd,amsbsy,amsmath}
\usepackage{authblk}
\usepackage[nottoc,numbib]{tocbibind}
\usepackage[titletoc]{appendix}
\usepackage{tikz,float}
\usetikzlibrary{patterns}
\usepackage[english]{babel}
\usepackage{cite}
\usepackage{stmaryrd}
\usepackage[colorlinks=true, pdfstartview=FitV, linkcolor=blue, citecolor=blue, urlcolor=blue, breaklinks=true]{hyperref}

\usepackage{helvet}         
\usepackage{courier}        
\usepackage{type1cm}  
\usepackage{multicol}        
\usepackage[bottom]{footmisc}
\usepackage{longtable}

\usepackage{hyperref}
\usepackage{makeidx}         
\usepackage{graphicx}        

\makeindex             

\usepackage{amstext,amsfonts,amssymb,amscd,amsbsy,amsmath}
\usepackage[ruled,vlined]{algorithm2e}
\usepackage{algpseudocode}
\usepackage{tikz-cd}	
\usepackage{tikz}
\usetikzlibrary{arrows}

\usetikzlibrary{decorations.markings}
\setcounter{secnumdepth}{3}
\usepackage{youngtab}

\numberwithin{equation}{section}

\theoremstyle{definition}
\newtheorem{definition}{Definition}
\newtheorem{proposition}{Proposition}
\newtheorem{theorem}{Theorem}
\newtheorem{lemma}{Lemma}
\newtheorem{corollary}{Corollary}
\newtheorem{remark}{Remark}
\newtheorem{example}{Example}

\newtheorem{conjecture}{Conjecture}
\newtheorem*{corollary*}{Corollary}
\newtheorem*{lemma*}{Lemma}
\newtheorem*{theorem*}{Theorem}
\newtheorem*{proposition*}{Proposition}
\newtheorem*{problem*}{Problem}

\newcommand{\w}{\underline{w}}
\newcommand{\p}{\mathbf{p}}

\newcommand{\boundellipse}[3]
{(#1) ellipse (#2 and #3)
}

\newcommand{\G}{\mathcal{G}}

\newcommand{\ff}{\mathcal{F}}

\newcommand{\bm}{\mathbf{m}}
\newcommand{\bn}{\mathbf{n}}
\newcommand{\bfff}{\mathbf{f}}

\newcommand{\bw}{\mathbf{w}}

\newcommand{\bu}{\mathbf{u}}

\DeclareMathOperator{\trop}{trop}

\DeclareMathOperator{\Proj}{Proj}

\newcommand{\Spec}{\operatorname*{Spec}}

\newcommand{\gr}{{\operatorname*{gr}}}
\newcommand{\lie}{\mathfrak}
\newcommand{\Gr}{{\operatorname*{Gr}}}

\newcommand{\val}{\mathfrak{v}}

\DeclareMathOperator{\Flag}{\mathcal{F}\hspace{-1.6pt}\ell}

\DeclareMathOperator{\init}{in}
\DeclareMathOperator{\wei}{wt}
\DeclareMathOperator{\sgn}{sgn}

\DeclareMathOperator{\conv}{conv}
\DeclareMathOperator{\cone}{cone}
\DeclareMathOperator{\rank}{rank}

\usepackage[utf8]{inputenc}

\title{Full-rank Valuations and Toric Initial Ideals}
\author{Lara Bossinger\footnote{Supported by "Programa de Becas Posdoctorales en la UNAM 2018" Instituto de Matem\'aticas, UNAM, and Max Planck Institute for Mathematics in the Sciences, Leipzig.}}
\date{}

\begin{document}

\maketitle
\vspace{-.75cm}
\begin{abstract}
Let $V(I)$ be a polarized projective variety or a subvariety of a product of projective spaces and let $A$ be its (multi-)homogeneous coordinate ring. To a full-rank valuation $\val$ on $A$ we associate a weight vector $w_\val$. Our main result is that the value semi-group of $\val$ is generated by the images of the generators of $A$ if and only if the initial ideal of $I$ with respect to $w_\val$ is prime. As application we prove a conjecture by \cite{BLMM} connecting the Minkowski property of string polytopes to the tropical flag variety.
For Rietsch-Williams' valuation for Grassmannians we identify a class of plabic graphs with non-integral associated Newton--Okounkov polytope (for $\Gr_k(\mathbb C^n)$ with $n\ge 6$ and $k\ge 3$).
\end{abstract}

\section{Introduction}

In the context of toric degenerations\footnote{A \emph{toric degeneration} of a projective variety $X$ is a flat morphism $\pi:\mathcal X\to \mathbb A^m$ with generic fiber $\pi^{-1}(t)$ for $t\not=0$ isomorphic to $X$ and $\pi^{-1}(0)$ a projective toric variety.} of projective varieties the study of full-rank valuations on homogeneous coordinate rings  (see Definition~\ref{def: valuation}) is very popular.
This goes back to a result of Anderson \cite{An13}: if the semi-group, which is the image of the valuation (called \emph{value semi-group}) is finitely generated, the valuation defines a toric degeneration.
Therefore, a hard and central question is whether a given valuation has finitely generated value semi-group or not.

For example, the \emph{valuations from birational sequences} in \cite{FFL15} are of full rank, and constructed to define toric degenerations of flag and spherical varieties. 
But for a general valuation arising in this setting it remains unknown if its value semi-group is finitely generated.
In the recent paper \cite{B-birat} a new class of valuations from birational sequences for Grassmannians is constructed.
The author applies results of this paper to identify those giving toric degenerations.

To gain more control over the valuation it is moreover desirable to identify algebra generators of $A$ whose valuation images generate the value semi-group. 
Such generators are called a \emph{Khovanskii basis}, introduced by Kaveh-Manon in \cite{KM16}. 
They construct full rank valuations with finite Khovanskii bases from maximal prime cones of the tropicalization of a polarized projective variety. 
In this paper, we complement their work by taking the opposite approach: starting from a full rank valuation we give a criterion for when it comes from a maximal prime cone.
Our main tool is reformulating the problem in terms of initial ideals in Gr\"obner theory.

\medskip

Throughout the paper for $n\in \mathbb Z_{>0}$ let $[n]$ denote $\{1,\dots,n\}$.
Let $X$ be a subvariety of the product of projective spaces $\mathbb P^{k_1-1}\times \dots \times \mathbb P^{k_s-1}$. In particular, if $s=1$ then $X$ is a polarized projective variety.
Its (multi-)homogeneous coordinate ring $A$ is given by $\mathbb C[x_{ij}\vert i\in[s],j\in[k_i]]/I$.
Here $I$ is a prime ideal in $S:=\mathbb C[x_{ij}\vert i\in[s],j\in[k_i]]$, the total coordinate ring of $\mathbb P^{k_1-1}\times \dots \times \mathbb P^{k_s-1}$. 
Further, $I$ is homogeneous with respect to the $\mathbb Z_{\ge 0}^s$-grading on $S$.
By $\bar x_{ij}\in A$ we denote the cosets of variables $x_{ij}$.
Let $d$ be the Krull-dimension of $A$.

A valuation $\val:A\setminus \{0\}\to \mathbb Z^d$ has \emph{full rank} if its image (the value semi-group $S(A,\val)$) spans a sublattice of full rank in $\mathbb Z^d$.
It is \emph{homogeneous} if it respects the grading on $A$.
From now on we only consider full-rank valuations.
In Definition~\ref{def: wt matrix from valuation} we define the \emph{weighting matrix} of $\val$ as $M_\val:=(\val(\bar x_{ij}))_{ij}\in \mathbb Z^{d\times (k_1+\dots +k_s)}$.

By means of higher Gr\"obner theory (see for example, \cite[\S8]{KM16}), we consider the \emph{initial ideal} $\init_{M_\val}(I)\subset S$ of $I$ with respect to $M_\val$ (see Definition~\ref{def: init wrt M}).
Our main result is the following theorem. It is formulated in greater detail in Theorem~\ref{thm: val and quasi val with wt matrix} below.

\begin{theorem*}
Let $\val:A\setminus\{0\}\to \mathbb Z^d$ be a full-rank valuation 
with full rank weighting matrix $M_\val\in\mathbb Z^{d\times (k_1+\dots+k_s)}$ for the presentation $S/I$ of $A$. Then
\begin{center}
 $S(A,\val)$ is generated by $\{\val(\bar x_{ij})\}_{i\in[s],j\in[k_i]}\quad \quad \text{ if and only if} \quad \quad \init_{M_\val}(I)$ is prime. 
\end{center}
\end{theorem*}

The theorem has some very interesting implications in view of toric degenerations and Newton--Okounkov bodies.
Consider a valuation of form $\val:A\setminus\{0\}\to \mathbb Z_{\ge 0}^{s}\times \mathbb Z^{d-s}$ with $\val(f)=(\deg f,\cdot)$ for all $f\in A$.
Without loss of generality by \cite[Remark 2.6]{IW18} we may assume that any full-rank homogeneous valuation is of this form.
The Newton--Okounkov cone $C(A, \val)\subset \mathbb R^{s}\times \mathbb R^{d-s}$ is the cone over its image.
The \emph{Newton--Okounkov body} $\Delta(A,\val)$ is then the intersection of $C(A,\val)$ with the subspace $\{(1,\dots,1)\}\times \mathbb R^{d-s}$, for details consider Definition~\ref{def: val sg NO}.
Newton--Okounkov bodies are in general pretty wild objects, they are convex bodies but need neither be polyhedral nor finite.
However, if $S(A,\val)$ is finitely generated they are rational polytopes (this case for $s=1$ is treated by Anderson).

Recall, that if $S(A,\val)$ is generated by $\{\val(\bar x_{ij})\}_{i\in[s],j\in[k_i]}$ (i.e. $\{\bar x_{ij}\}_{i\in [s],j\in [k_i]}$ is a \emph{Khovanskii basis} for $(A,\val)$) this has a number of useful consequences.
For example, in this case the associated graded algebra of $\val$ can be presented as $S/\init_{M_\val}(I)$.
The following corollary is another such consequence and crucial for our application to flag varieties:

\begin{corollary*}
Let $\val:A\setminus\{0\}\to \mathbb Z^d$ be a full-rank valuation 
with $M_\val\in\mathbb Z^{d\times (k_1+\dots+k_s)}$ the weighting matrix of $\val$ for the presentation $S/I$ of $A$. Assume additionally $\init_{M_\val}(I)$ is prime, hence $S(A,\val)$ is generated by $\{\val(\bar x_{ij})\}_{i\in[s],j\in[k_i]}$. Then
 the Newton--Okounkov polytope is the Minkowski sum\footnote{For two polytopes $A,B\subset \mathbb R^d$ their \emph{Minkowski sum} is defined as $A+B:=\{a+b\mid  a\in A,b\in B\}\subset \mathbb R^d.$}: 
\[
\Delta(A,\val)= \conv(\val(\bar x_{1j}))_{ j\in[k_1]}+\dots+ \conv(\val(\bar x_{sj}))_{j\in[k_s]}. 
\]
\end{corollary*}

We continue our study by considering monomial maps as appear for example in \cite{MoSh}.
Let $\phi_\val:S\to \mathbb C[y_1,\dots,y_d]$ be the homomophism defined by sending a generator $x_{ij}$ to the monomial in $y_k$'s with exponent vector $\val(\bar x_{ij})$.
Its kernel $\ker(\phi_\val)\subset S$ is a toric ideal, see \eqref{eq:kernel phi and gr}.

Further analyzing our weighting matrices, we associate to each a \emph{weight vector}. 
Let $w_\val\in \mathbb Z^{k_1+\dots+k_s}$ be a weight vector associated to $M_\val$ satisfying $\init_{M_\val}(I)=\init_{w_\val}(I)$ (see Lemma~\ref{lem: wt for matrix}).
The following lemma reveals the relation between $w_\val$ and the toric ideal $\ker(\phi_\val)$, see also Lemma~\ref{lem: wt matrix val in trop}.

\begin{lemma*}
For every full-rank valuation  $\val:A\setminus\{0\}\to \mathbb Z^d$ we have  $\init_{w_\val}(I)\subset \ker(\phi_\val)$.
In particular, $\init_{w_\val}(I)$ is monomial-free and $w_\val$ is contained in the tropicalization of $I$ (in the sense of \cite{M-S}, see \eqref{eq:trop}).
\end{lemma*}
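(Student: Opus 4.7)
The plan is to reduce the inclusion $\init_{w_\val}(I)\subset\ker(\phi_\val)$ to a one-dimensional linear vanishing inside the associated graded algebra $\gr_\val(A)$ that is made available by the full-rank hypothesis.

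First I would use $\init_{w_\val}(I)=\init_{M_\val}(I)$, which follows from the construction of $w_\val$ and is recorded above, to reduce to showing $\init_{M_\val}(f)\in\ker(\phi_\val)$ for every $f\in I$. Writing $f=\sum_{\alpha}c_\alpha x^\alpha$, let $v$ denote the lex-minimum value of $M_\val\alpha$ over the support of $f$ and set $T=\{\alpha:c_\alpha\neq 0,\ M_\val\alpha=v\}$, so that $\init_{M_\val}(f)=\sum_{\alpha\in T}c_\alpha x^\alpha$. Since $\val(\bar x^\alpha)=M_\val\alpha=v$ for each $\alpha\in T$, the map $\phi_\val$ collapses every $x^\alpha$ in the support of $\init_{M_\val}(f)$ to $y^v$, reducing the problem to showing that a single scalar coefficient vanishes.

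To identify that scalar with zero, I would split $f=\init_{M_\val}(f)+g$ where $g$ collects the terms whose $M_\val$-weight strictly exceeds $v$. Subadditivity of $\val$ gives $\val(\bar g)>v$ (in lex) whenever $\bar g\neq 0$, and $\bar f=0$ yields $\overline{\init_{M_\val}(f)}=-\bar g$. Consequently the class of $\sum_{\alpha\in T}c_\alpha\bar x^\alpha$ is trivial in the grade-$v$ piece $A_{\ge v}/A_{>v}$ of $\gr_\val(A)$. The full-rank hypothesis makes this graded piece at most one-dimensional, so the classes $\{[\bar x^\alpha]\}_{\alpha\in T}$ are pairwise proportional, and their linear dependence transports, via the description of $\ker(\phi_\val)$ in \eqref{eq:kernel phi and gr}, to $\phi_\val(\init_{M_\val}(f))=0$.

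The two remaining assertions follow immediately. As the kernel of a monomial map, $\ker(\phi_\val)$ is a toric ideal, hence monomial-free, so $\init_{w_\val}(I)$ is monomial-free as well; by the Maclagan--Sturmfels criterion recalled in \eqref{eq:trop}, this monomial-freeness is equivalent to $w_\val\in\trop(I)$. I expect the main obstacle to lie in the one-dimensional identification step: transporting the vanishing $\sum_{\alpha\in T}c_\alpha[\bar x^\alpha]=0$ in $\gr_\val(A)$ to the vanishing $\phi_\val(\init_{M_\val}(f))=0$ without introducing spurious scalars requires the full-rank hypothesis together with the explicit normalization of $\phi_\val$ captured by \eqref{eq:kernel phi and gr}.
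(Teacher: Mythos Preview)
Your proof is correct and follows essentially the same route as the paper's: both reduce via $\init_{w_\val}(I)=\init_{M_\val}(I)$, observe that all monomials in $\init_{M_\val}(f)$ share a common $M_\val$-weight $a$, use $f\in I$ to place $\sum_{\alpha\in T}c_\alpha\bar x^\alpha$ in $F_{\val\succ a}$, and then invoke one-dimensional leaves together with \eqref{eq:kernel phi and gr} to land in $\ker(\phi_\val)$. Your explicit decomposition $f=\init_{M_\val}(f)+g$ spells out a step the paper leaves implicit, and the scalar-normalization issue you flag in the final transport is handled identically in the paper, by appeal to \eqref{eq:kernel phi and gr}.
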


We apply our results to two classes of valuations.

\paragraph{Grassmannians and valuations from plabic graphs.}
We consider a class of valuations on the homogeneous coordinate rings of Grassmannians defined in \cite{RW17}.
In the context of cluster algebras and cluster duality for Grassmannians, they associate a full-rank valuation $\val_\G$ to every plabic graph $\mathcal G$ with certain properties \cite{Pos06}.

For $k<n$ denote by $\binom{[n]}{k}$ the set of $k$-element subsets of $[n]$. 
Consider the Grassmannian with its Pl\"ucker embedding $\Gr_k(\mathbb C^n)\hookrightarrow \mathbb P^{\binom{n}{k}-1}$.
We obtain its homogeneous coordinate ring $A_{k,n}:=\mathbb C[p_J\vert J\in \binom{[n]}{k}]/I_{k,n}$.
In particular, $I_{k,n}$ is the Pl\"ucker ideal defining the Grassmannian.
The elements $\bar p_J\in A_{k,n}$ are Pl\"ucker coordinates.
Applying our main theorem we identify a class of plabic graphs for $\Gr_k(\mathbb C^n)$ with $k\ge 3$ and $n\ge 6$ for which the Newton--Okounkov body $\Delta(A_{k,n},\val_{\G})$ is non-integral (see Theorem~\ref{thm:hexa not prime}).

Related to \cite{RW17}, in \cite{BFFHL} the authors associate \emph{plabic weight vectors} ${\bf w}_\G$ for $\mathbb C[p_J]_J$ to the same plabic graphs $\G$ (see Definition~\ref{def: plabic deg}). 
They study these weight vector for $\Gr_2(\mathbb C^n)$ and $\Gr_3(\mathbb C^6)$ and show that, in these cases, they lie in the \emph{tropical Grassmannian} \cite{SS04}, i.e. the tropicalization of $I_{k,n}$.
With our methods, we show that for $\Gr_2({\mathbb C^n})$ their weight yields the same toric degeneration as Rietsch--Williams' Newton--Okounkov polytope (see Proposition~\ref{prop: plabic lin form} for details):

\begin{proposition*}
For every plabic weight vector $\bw_\G$ for $\Gr_{2}(\mathbb C^n)$ we have $
\init_{M_\G}(I_{2,n})=\init_{\bw_\G}(I_{2,n})$.
In particular, for the associated graded of $\val_{\G}$ we have $\gr_{\val_{\G}}(A_{2,n})=\mathbb C[p_{ij}]_{ij}/\init_{\bw_\G}(I_{2,n})$.

\end{proposition*}

For more general Grassmannians Theorem~\ref{thm:hexa not prime} below shows that this is not always the case.

\paragraph{Flag varieties and string valuations.}
We consider string valuations \cite{Kav15,FFL15} on the homogeneous coordinate ring of the full flag variety $\Flag_n$\footnote{We consider the full flag variety of type $\mathtt A$, i.e. $\Flag_n:=\{ \{0\}\subset V_1\subset \dots\subset V_{n-1}\subset \mathbb C^n\mid \dim V_i=i\}$. It can also be realized as $SL_n/B$, where $B$ are upper triangular matrices in $SL_n$.}. 
They were defined to realize string parametrizations \cite{Lit98,BZ01} of Lusztig's dual canonical basis in terms of Newton--Okounkov cones and polytopes.

Consider the algebraic group $SL_n$ and its Lie algebra $\lie{sl}_n$ over $\mathbb C$.   
Fix a Cartan decomposition and take $\Lambda\cong \mathbb Z^{n-1}$ to be the weight lattice.
It has a basis of fundamental weights $\omega_1,\dots,\omega_{n-1}$ and every dominant integral weight, i.e. $\lambda \in \mathbb Z^{n-1}_{\ge0}$ yields an irreducible highest weight representation $V(\lambda)$ of $\lie{sl}_n$.
The Weyl group of $\lie{sl}_n$ is the symmetric group $S_n$. 
By $w_0\in S_n$ we denote its longest element.

For every reduced expression $\w_0$ of $w_0\in S_n$ and every dominant integral weight $\lambda \in \mathbb Z^{n-1}_{\ge0}$, there exists a \emph{string polytope} $Q_{\w_0}(\lambda)\subset \mathbb R^{\frac{n(n-1)}{2}}$.
Its lattice points parametrize a basis for $V(\lambda)$.
The string polytope for the weight $\rho=\omega_1+\dots +\omega_{n-1}$ is the Newton--Okounkov polytope for the string valuation $\val_{\w_0}$ on the homogeneous coordinate ring of $\Flag_n$.

We embed $\Flag_n$ into a product of projective spaces as follows: first, consider the embedding into the product of Grassmannians $\Gr_1(\mathbb C^n)\times\dots\times\Gr_{n-1}(\mathbb C^n)$.
Then concatenate with the Pl\"ucker embeddings $\Gr_k(\mathbb C^n)\hookrightarrow \mathbb P^{\binom{n}{k}-1}$ for every $1\le k\le n-1$.
This yields the (multi-)homogeneous coordinate ring $A_n$ of $\Flag_n$ as $\mathbb C[p_J\vert J\subset [n]]/I_n$.
Our main result applied to string valuations yields the following, for more details see Theorem~\ref{thm: quasival for string}. 
 
\begin{theorem*}
Let $\w_0$ be a reduced expression of $w_0\in S_n$ and consider the string valuation $\val_{\w_0}$.
If $\init_{M_{\val_{\w_0}}}(I_n)$ is prime, then 
\[
Q_{\w_0}(\rho)= \conv (\val_{\w_0}(\bar p_J))_{J\in\binom{[n]}{1} } + \dots + \conv(\val_{\w_0}(\bar p_J))_{ J\in\binom{[n]}{n-1}}.
\]
\end{theorem*}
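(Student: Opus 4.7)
The plan is to deduce this theorem as a direct application of the Corollary stated above to the multi-homogeneous setting of $\Flag_n$. I take $A=A_n$, $s=n-1$, $k_i=\binom{n}{i}$, and identify the generators $\bar x_{ij}$ with the Pl\"ucker coordinates $\bar p_J$ for $J\in\binom{[n]}{i}$. First I would verify that $\val_{\w_0}$ meets the hypotheses of the Corollary: it is of full rank by construction (see \cite{Kav15,FFL15}), and it is homogeneous with respect to the multigrading of $A_n$ because it is defined by a lexicographic refinement of weight filtrations on the representations $V(\lambda)$, so it respects each summand of the $\mathbb Z_{\ge 0}^{n-1}$-grading of $A_n$.

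Given the hypothesis that $\init_{M_{\val_{\w_0}}}(I_n)$ is prime, the Corollary applies and yields that $\{\bar p_J\}_J$ is a Khovanskii basis for $(A_n,\val_{\w_0})$ together with the Minkowski decomposition
\[
\Delta(A_n,\val_{\w_0}) \;=\; \conv(\val_{\w_0}(\bar p_J))_{J\in\binom{[n]}{1}} + \cdots + \conv(\val_{\w_0}(\bar p_J))_{J\in\binom{[n]}{n-1}}.
\]
To finish the proof it therefore suffices to identify the left--hand side with the string polytope $Q_{\w_0}(\rho)$. For this I would invoke Kaveh's theorem \cite{Kav15}, which states that the Newton--Okounkov body of the string valuation on the homogeneous coordinate ring of $\Flag_n$ polarized by a dominant weight $\lambda$ is exactly $Q_{\w_0}(\lambda)$. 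Under the multi-grading used here, the Pl\"ucker coordinate $\bar p_J$ with $|J|=i$ has multi-degree equal to the $i$-th standard basis vector $e_i$, which corresponds to the fundamental weight $\omega_i$. Thus the slice at multi-degree $(1,\ldots,1)$ used in Definition~\ref{def: val sg NO} corresponds to polarization by $\rho=\omega_1+\cdots+\omega_{n-1}$, and consequently $\Delta(A_n,\val_{\w_0}) = Q_{\w_0}(\rho)$. Substituting this into the Minkowski decomposition above yields the statement.

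The main subtlety I anticipate is the careful bookkeeping between the multigrading on $A_n$ and the weight lattice of $\lie{sl}_n$: one must check that the $\mathbb Z_{\ge 0}^{n-1}$-degree on $A_n$ really identifies with the lattice of fundamental weights via $e_i\leftrightarrow \omega_i$, and that the slice of the Newton--Okounkov cone at level $(1,\ldots,1)$ genuinely coincides with the string polytope at $\rho$ rather than some rescaling. Once this dictionary is in place the argument is a clean two-step reduction: apply the Corollary to obtain the Minkowski decomposition, then swap $\Delta(A_n,\val_{\w_0})$ for $Q_{\w_0}(\rho)$ via the standard Newton--Okounkov/string polytope identification.
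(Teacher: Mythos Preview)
Your proposal is correct and follows essentially the same route as the paper: apply the main Theorem/Corollary to obtain the Minkowski decomposition of $\Delta(A_n,\val_{\w_0})$, then identify this Newton--Okounkov body with $Q_{\w_0}(\rho)$ via the known string-polytope realization (the paper cites \cite[\S11]{FFL15} rather than \cite{Kav15}, but this is cosmetic). The only hypothesis you do not explicitly check is that $I_n$ is generated by elements of degree $>\epsilon_i$ for all $i$, which the paper verifies by citing the quadratic Pl\"ucker relations \cite[Theorem~14.6]{MS05}; also note that homogeneity of $\val_{\w_0}$ is a \emph{conclusion} of the Corollary rather than a hypothesis you need to supply.
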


A central question concerning string polytopes is the following: fix a reduced decomposition $\w_0$ and let $\lambda=a_1\omega_1+\dots a_{n-1}\omega_{n-1}$ with $a_i\in\mathbb Z_{\ge 0}$. 
\emph{Is the string polytope $Q_{\w_0}(\lambda)$ equal to the Minkowski sum $a_1Q_{\w_0}(\omega_1)+\dots +a_{n-1} Q_{\w_0}(\omega_{n-1})$ of fundamental string polytopes?}

If equality holds for all $\lambda$, we say $\w_0$ has the \emph{Minkowski property}.
In \cite{BLMM} the authors define weight vectors ${\bf w}_{\w_0}$ for every $\w_0$.
They conjecture a relation between the Minkowski property of $\w_0$ and the weight vector ${\bf w}_{\w_0}$ lying in a maximal prime cone of the \emph{tropical flag variety}, i.e. the tropicalization of $I_n$.
A corollary of our main theorem proves an even stronger version of their conjecture. 
It can be summarized as follows (for details see Corollary~\ref{cor: prime implies MP}):

\begin{corollary*}
Let $\w_0$ be a reduced expression of $w_0\in S_n$ and consider the weight vector ${\bf w}_{\w_0}$. 
Then
$\init_{{\bf w}_{{\w_0}}}(I_n)$ is prime if and only if $\w_0$  has the Minkowski property.
\end{corollary*}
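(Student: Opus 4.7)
The plan is to combine the main theorem with the preceding lemma to rephrase primality of $\init_{{\bf w}_{\w_0}}(I_n)$ as a Khovanskii basis statement for $\val_{\w_0}$, and then to identify this Khovanskii basis property with the Minkowski property through the lattice-point structure of string polytopes.

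For the direction ``prime $\Rightarrow$ Minkowski property'' I would first note that ${\bf w}_{\w_0}=w_{\val_{\w_0}}$, so $\init_{{\bf w}_{\w_0}}(I_n)=\init_{M_{\val_{\w_0}}}(I_n)$. By the lemma, $\init_{{\bf w}_{\w_0}}(I_n)\subseteq\ker(\phi_{\val_{\w_0}})$, and the latter is toric. The strategy is to upgrade primality to toricity via a Krull dimension comparison: both $S/I_n$ and $S/\ker(\phi_{\val_{\w_0}})$ have Krull dimension $d$ (the former because $\dim A_n=d$, the latter because $\val_{\w_0}$ is full-rank and so the image of $\phi_{\val_{\w_0}}$ is the semigroup algebra of a rank-$d$ sublattice of $\mathbb Z^d$), and initial ideals preserve Krull dimension, so $\dim S/\init_{{\bf w}_{\w_0}}(I_n)=d$ as well. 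A containment of primes with equal Krull dimensions must be an equality, giving $\init_{{\bf w}_{\w_0}}(I_n)=\ker(\phi_{\val_{\w_0}})$; this ideal is toric, and the main theorem then produces that the Pl\"ucker coordinates form a Khovanskii basis for $\val_{\w_0}$.

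The reverse reduction is immediate: if $\{\bar p_J\}_J$ is a Khovanskii basis, the main theorem produces toricity of $\init_{M_{\val_{\w_0}}}(I_n)$ and hence primality. Thus the corollary reduces to the equivalence of ``Pl\"ucker coordinates form a Khovanskii basis for $\val_{\w_0}$'' with the Minkowski property of $\w_0$. For this, the key input is that the $\lambda$-graded piece of $S(A_n,\val_{\w_0})$ coincides with the lattice points of $Q_{\w_0}(\lambda)$, and that for fundamental $\lambda=\omega_k$ these lattice points are exactly $\{\val_{\w_0}(\bar p_J):J\in\binom{[n]}{k}\}$, a set of cardinality $\binom{n}{k}=\dim V(\omega_k)$. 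Khovanskii basis $\Rightarrow$ Minkowski property then follows by interpreting generation as decomposability of each lattice point into fundamental Pl\"ucker valuations and passing to convex hulls. For the converse I would invoke the embedding of $V(\lambda)$ as the Cartan component of $\bigotimes_k V(\omega_k)^{\otimes a_k}$: a lattice point of $Q_{\w_0}(\lambda)$ corresponds to a basis vector of $V(\lambda)$, which via this tensor embedding factors into fundamental basis vectors and thereby yields the required decomposition into a sum of Pl\"ucker valuations.

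I expect the main obstacle to be the direction Minkowski property $\Rightarrow$ Khovanskii basis, because polytopal equality does not automatically descend to equality of lattice points; the argument must carefully exploit the representation-theoretic origin of the string-polytope lattice points rather than a purely combinatorial normality statement. The Krull-dimension step in the first reduction is standard but relies on $\init_{{\bf w}_{\w_0}}(I_n)$ being monomial-free, which is already guaranteed by the lemma.
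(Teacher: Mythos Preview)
Your overall strategy matches the paper's: reduce via the main theorem (together with $\init_{{\bf w}_{\w_0}}(I_n)=\init_{M_{\val_{\w_0}}}(I_n)$) to the equivalence ``Pl\"uckers are a Khovanskii basis for $\val_{\w_0}$'' $\Leftrightarrow$ ``$\w_0$ has the Minkowski property''. The Krull-dimension argument upgrading prime to toric is correct but unnecessary---the paper simply cites its Theorem~3, and in the main theorem the $\Leftarrow$ direction uses only primality while $\Rightarrow$ returns a toric ideal, so the two notions coincide here. The direction Khovanskii $\Rightarrow$ Minkowski (semi-group generated at fundamental levels, hence every cone slice decomposes as the Minkowski sum of fundamental slices) is also the same in both.

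There is, however, a genuine gap in your direction Minkowski $\Rightarrow$ Khovanskii. Your Cartan-component argument never invokes the Minkowski hypothesis: if ``a lattice point of $Q_{\w_0}(\lambda)$ \dots\ factors into fundamental basis vectors'' worked as stated, it would show that Pl\"uckers are \emph{always} a Khovanskii basis for $\val_{\w_0}$, contradicting the paper's String~4 example. The problem is that under $V(\lambda)\hookrightarrow\bigotimes_k V(\omega_k)^{\otimes a_k}$ (dually, writing $f\in A_{n,\lambda}$ as a polynomial in Pl\"uckers) a basis vector becomes a \emph{linear combination} of pure tensors; when several monomials share the minimal valuation, cancellation can push $\val_{\w_0}(f)$ strictly above every sum of Pl\"ucker valuations---precisely what happens for $\bar p_{2}\bar p_{134}+\bar p_{1}\bar p_{234}$ in String~4. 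The paper handles this direction instead by a one-line ``reverse argument from above'' through the cone description $Q_{\w_0}(\lambda)=P_{\val_{\w_0}}(\lambda)$, not through the tensor embedding. You have correctly identified that passing from polytope equality to semi-group generation is the obstacle, but the representation-theoretic route you sketch does not close it.
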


The paper is structured as follows.
We recall preliminaries on valuations, toric degenerations and Newton--Okounkov bodies in \S\ref{sec:notation}.
We then turn to quasi-valuations and weighting matrices in \S\ref{sec:val and quasival} and prove our main result Theorem~\ref{thm: val and quasi val with wt matrix}.
We make the connection to weight vectors and tropicalization and prove the above mentioned result Lemma~\ref{lem: wt matrix val in trop}.
In \S\ref{sec:exp trop flag} we apply our results to string valuations for flag varieties and in \S\ref{sec: exp plabic} to valuations from plabic graphs for Grassmannians. 
The Appendix contains background information for \S\ref{sec: exp plabic}.

\medskip

{\bf Acknowledgements.} The results of this paper were mostly obtained during my PhD at the University of Cologne under Peter Littelmann's supervision. 
I am deeply grateful for his support and guidance throughout my PhD.
I would like to further thank Xin Fang, Kiumars Kaveh, Fatemeh Mohammadi and Bea Schumann for inspiring discussions.
Moreover, I am grateful to Alfredo N\'ajera Ch\'avez for helpful comments during the preparation of this manuscript and to two anonymous referees who helped improve and clarify the statements.

\section{Notation}\label{sec:notation}

We recall basic notions on valuations and Newton--Okounkov polytopes as presented in \cite{KK12}.
Let $A$ be the homogeneous coordinate ring of a projective variety or more generally the (multi-)homogeneous coordinate ring of a subvariety of a product of projective spaces, which can be described as below.

The total coordinate ring of $\mathbb P^{k_1-1}\times \dots\times\mathbb P^{k_s-1}$ for $k_1,\dots,k_s\ge 1$ is $S:=\mathbb C[x_{ij}\vert i\in[s],j\in[k_i]]$. 
It is graded by $\mathbb Z^s_{\ge 0}$ as follows.
Let $\{\epsilon_i\}_{i\in[s]}$ denote the standard basis of $\mathbb Z^s$.
The degree of coordinates is given by $\deg x_{ij}:= \epsilon_i\in\mathbb Z^s$ (see e.g. \cite[Example~5.2.2]{CLS11}) for all $i\in[s],j\in[k_i]$.
For $u\in\mathbb Z^{k_1+\dots+k_s}$ let $x^u$ denote the monomial $\prod_{i\in[s],j\in[k_i]} x_{ij}^{u_{ij}}\in S$.
We fix the lexicographic order on $\mathbb Z^s$ and consider $f=\sum a_{u}x^{u} \in S$.
Then 
\[
\deg f:=\max{}_{\text{lex}}\{\deg x^{u}\mid a_{u}\not=0\}.
\]
Let $X$ be subvariety of $\mathbb P^{k_1-1}\times \dots\times\mathbb P^{k_s-1}$ and $A$ its homogeneous coordinate ring of Krull-dimension $d$.
Then $A=S/I$ for some prime ideal $I\subset S$ that is homogeneous with respect to the $\mathbb Z^s_{\ge 0}$-grading.
The $\mathbb Z_{\ge 0}^s$-grading on $S$ induces a $\mathbb Z^s_{\ge 0}$-grading on $A$, which we denote $A=\bigoplus_{m\in\mathbb Z^s_{\ge 0}}A_m$. 
We call a grading of this form a \emph{positive (multi-)grading}.

To define a valuation on $A$ we fix a linear order $\prec$ on the additive abelian group $\mathbb Z^d$. 

\begin{definition}\label{def: valuation}
A map $\val: A\setminus\{0\}\to (\mathbb Z^d,\prec)$ is a \emph{valuation}\footnote{We assume the image of $\val$ lies in $\mathbb Z^d$ for simplicity. Without any more effort, we could replace it by $\mathbb Q^d$. The same is true for all (quasi-)valuations considered in the rest of the paper.}, if it satisfies for $f,g\in A\setminus\{0\},c\in \mathbb C^*$ 

(i) \  $\val(f+g)\succeq \min \{\val(f),\val(g)\}$,

(ii) $\val(fg)=\val(f)+\val(g)$, and  

(iii) $\val(cf)=\val(f)$.
\end{definition}
If we replace (ii) by $\val(fg)\succeq \val(f)+\val(g)$ then $\val$ is called a \emph{quasi-valuation} (also called \emph{loose valuation} in \cite{T03}).
Let $\val:A\setminus\{0\}\to(\mathbb Z^d,\prec)$ be a valuation. 
The image $\{\val(f)\mid f\in A\setminus\{0\}\}\subset \mathbb Z^d$ forms an additive semi-group. 
We denote it by $S(A,\val)$ and refer to it as the \emph{value semi-group}. 
The \emph{rank} of the valuation is the rank of the sublattice generated by $S(A,\val)$ in $\mathbb Z^d$.
We are interested in valuations of \emph{full rank}, i.e. $\rank(\val)=d$.

One naturally defines a $\mathbb Z^d$-filtration on $A$ by $F_{\val \succeq a}:=\{f\in A\setminus\{0\}\vert \val(f)\succeq a\}\cup \{0\}$ (and similarly $F_{\val\succ a}$). 
The \emph{associated graded algebra} of the filtration $\{F_{\val\succeq a}\}_{a\in \mathbb Z^d}$ is 
\begin{eqnarray}\label{eq:def ass graded}
\gr_\val(A):=\bigoplus_{a\in \mathbb Z^d}F_{\val\succeq a}/F_{\val \succ a}.
\end{eqnarray}
If the filtered components $F_{\val \succeq a}/F_{\val \succ a}$ are at most one-dimensional for all $a\in\mathbb Z^d$, we say $\val$ has \emph{one-dimensional leaves}.
If $\val$ has full rank by \cite[Theorem~2.3]{KM16} it also has one-dimensional leaves.
Moreover, in this case there exists an isomorphism $\mathbb C[S(A,\val)]\cong \gr_\val(A)$  by \cite[Remark~4.13]{BG09}.
To define a $\mathbb Z_{\ge 0}$-filtration on $A$ induced by $\val$ we make use of the following standard trick that con be found in \cite[Proposition 1.8]{Ba82} or \cite[Lemma 3.2]{Cal02}.

\begin{lemma}\label{lem:caldero}
Let $F$ be a finite subset of $\mathbb Z^d$. Then there exists a linear form $e:\mathbb Z^d\to \mathbb Z_{\ge 0}$ such that for all $\bm,\bn\in F$ we have $\bm \prec \bn \Rightarrow e(\bm)>e(\bn)$ (note the switch!). 
\end{lemma}

Assume $S(A,\val)$ is finitely generated, more precisely assume it is generated by $\{\val(\bar x_{ij})\}_{i\in[s],j\in [k_i]}$.
In this case $\{\bar x_{ij}\}_{i\in[s],j\in [k_i]}$ is called a \emph{Khovanskii basis}\footnote{This term was introduced in \cite{KM16} generalizing the notion of SAGBI basis.} for $(A,\val)$.
Now choose a linear form as in the lemma for $F=\{\val(\bar x_{ij})\}_{i\in[s],j\in[k_i]}\subset \mathbb Z^d$.
We construct a $\mathbb Z_{\ge 0}$-filtration on $A$ by $F_{\le m}:=\{f\in A\setminus\{0\}\mid e(\val(f))\le m\}\cup\{0\}$ for $m\in \mathbb Z_{\ge 0}$.
Define similarly $F_{<m}$.
The associated graded algebra satisfies
\begin{eqnarray}
\gr_\val(A)\cong \bigoplus_{m\ge0}F_{\le m}/F_{<m}.
\end{eqnarray}
For $f\in A\setminus\{0\}$ denote by $\overline f$ its image in the quotient $F_{\le  e(\val(f))}/F_{<e(\val(f))}$, hence $\overline f\in\gr_\val(A)$. 
We obtain a family of $\mathbb C$-algebras containing $A$ and $\gr_\val(A)$ as fibers (see e.g. \cite[Proposition~5.1]{An13}) that can be defined as follows:

\begin{definition}\label{def: Rees algebra}
The \emph{Rees algebra} associated with the valuation $\val$ and the filtration $\{F_{\le m}\}_m$ is the flat $\mathbb C[t]$-subalgebra of $A[t]$ defined as
\begin{eqnarray}\label{eq: def Rees}
R_{\val,e}:=\bigoplus_{m\ge 0} (F_{\le m})t^m.
\end{eqnarray}
It has the properties that $R_{\val,e}/tR_{\val,e}\cong \gr_\val(A)$ and $R_{\val,e}/(1-t)R_{\val,e}\cong A$. 
In particular, if $A$ is $\mathbb Z_{\ge 0}$-graded, it defines a flat family over $\mathbb A^1$ (the coordinate on $\mathbb A^1$ given by $t$). 
The generic fiber is isomorphic to $\Proj(A)$ and the special fiber is the toric variety $\Proj(\gr_\val(A))$.
\end{definition}

It is desirable to have a valuation that encodes the grading of $A$: a valuation $\val$ on $A=\bigoplus_{m\in \mathbb Z^s_{\ge 0}} A_m$ is called \emph{homogeneous}, if for $f\in A$ with homogeneous decomposition $f=\sum_{m\in \mathbb Z^s_{\ge 0}} a_{m}f_m$ we have
\[
\val(f)=\min{}_\prec\{\val(f_m)\mid a_m\not=0\}.
\]
The valuation is called \emph{fully homogeneous}, if $\val(f)=(\deg f,\cdot)\in \mathbb Z^s\times \mathbb Z^{d-s}$ for all $f\in A$.
Given a full rank homogeneous valuations by \cite[Remark 2.6]{IW18} without loss of generality one may assume it is fully homogeneous.

Introduced by Lazarsfeld-Musta\c{t}\u{a} \cite{LM09} and Kaveh-Khovanskii \cite{KK12} we recall the definition of Newton--Okounkov body. 

\begin{definition}\label{def: val sg NO}
Let $\val:A\setminus \{0\}\to (\mathbb Z^s\times\mathbb Z^{d-s},\prec)$ be a fully homogeneous valuation. The \emph{Newton--Okounkov cone} is
\begin{eqnarray}\label{eq: def NO cone}
C(A,\val):=\overline{\cone(S(A,\val)\cup\{0\})}\subset \mathbb R^d. 
\end{eqnarray}
%
The \emph{Newton--Okounkov body} of $(A,\val)$ is now defined as
\begin{eqnarray}\label{eq: def NO body}
\Delta(A,\val):=C(A,\val) \cap \{(1,\dots,1)\}\times \mathbb R^{d-s}.
\end{eqnarray}
\end{definition}

For $\mathbb Z_{\ge 0}$-graded $A$, Anderson showed in \cite{An13} that if $\gr_{\val}(A)$ is finitely generated, $\Delta(A,\val)$ is a rational polytope. 
Moreover, it is the polytope associated to the normalization of the toric variety $\Proj(\gr_{\val}(A))$.

Dealing with polytopes throughout the paper we need the notion of \emph{Minkowski sum}. For two polytopes $A,B\subset \mathbb R^d$ it is defined as
\begin{eqnarray}\label{eq:Mink sum}
A+B:=\{a+b\mid  a\in A,b\in B\}\subset \mathbb R^d.
\end{eqnarray}
Consider polytpes of form $P_\val(\lambda):=C(A,\val)\cap (\{\lambda\}\times\mathbb R^{d-s})$ for $\lambda\in\mathbb R_{\ge 0}^s$.
It is a central question whether
$
P_\val(\epsilon_1)+\dots +P_\val(\epsilon_s)=\Delta(A,\val).
$
Our main result (Theorem~\ref{thm: val and quasi val with wt matrix} below) treats this question.

\section{Quasi-valuations with weighting matrices}\label{sec:val and quasival}

We briefly recall some background on higher-dimensional Gr\"obner theory  and quasi-valuations with weighting matrices as in \cite[\S3.1\&8.1]{KM16}. 
For classical results in Gr\"obner theory we rely on \cite[\S15]{Eis13}.
Then we define for a given valuation an associated quasi-valuation with weighting matrix.
The central result of this paper is Theorem~\ref{thm: val and quasi val with wt matrix}.
It is proved in full generality below, and applied to specific classes of valuations in \S\ref{sec:exp trop flag} and \S\ref{sec: exp plabic}.

As above let $A$ be a finitely generated algebra and domain of Krull-dimension $d$.
We assume $A$ has a presentation of form $\pi:S\to A$, such that $A=S/\ker(\pi)$. 
Here $S$ denotes the total coordinates ring of $\mathbb P^{k_1-1}\times\dots \times  \mathbb P^{k_s-1}$ with $\mathbb Z_{\ge 0}^s$-grading as defined above.
Let $I:=\ker(\pi)$ be homogeneous with respect to the $\mathbb Z_{\ge 0}^s$-grading.
To simplify notation, let $\pi(x_{ij})=:\bar x_{ij}$ for $i\in[s],j\in [k_i]$ and $n:=k_1+\dots +k_s$.
For simplicity we always consider (quasi-)valuations with image in $\mathbb Z^d$, but without much more effort all results extend to the case of $\mathbb Q^d$.

\begin{definition}\label{def: init wrt M}
Let $f=\sum a_{ u} x^{u} \in S$ with ${u}\in\mathbb Z^{n}$, where $x^{u}=x_1^{u_1}\cdots x_n^{u_n}$. For $M\in\mathbb Z^{d\times n}$ and a linear order $\prec$ on $\mathbb Z^d$ we define the \emph{initial form} of $f$ with respect to $M$ as
\begin{eqnarray}\label{eq: def init form M }
\init_M(f):=\sum_{\begin{smallmatrix}
Mm =\min_{\prec}\{M{u}\mid a_{u}\not=0\}
\end{smallmatrix}} a_{m} x^{m}.
\end{eqnarray}
We extend this definition to ideals $I\subset S$ by defining the \emph{initial ideal} of $I$ with respect to $M$ as $\init_M(I):=\langle \init_M(f) \mid f\in I \rangle\subset S$.
\end{definition}

Note that, by definition, if the ideal $I$ is homogeneous with respect to the $\mathbb Z^s_{\ge 0}$-grading, then so is every initial ideal of $I$.
Using the \emph{fundamental theorem of tropical geometry}  \cite[Theorem~3.2.3]{M-S} we recall the related notion of \emph{tropicalization} of an ideal $I\subset S$:
\begin{eqnarray}\label{eq:trop}
\trop(I)=
\left\{
     w\in\mathbb R^{n} 
        \mid     \init_{ w}(I) \text{  is monomial-free} 
\right\}.
\end{eqnarray}
By the \emph{Structure Theorem} \cite[Theorem 3.3.5]{M-S} we can choose a fan structure on $\trop(I)$ in such a way that it becomes a subfan of the Gr\"obner fan of $I$: 
$w$ and $v$ lie in the relative interior of a cone $C\subset \trop(I)$ (denoted $w,v\in C^\circ$) if $\init_w(I)=\init_v(I)$.
For this reason we adopt the notation $\init_C(I)$, which is defined as $\init_w(I)$ for arbitray $w\in C^\circ$.
Further, we call a cone $C\subset \trop(I)$ \emph{prime}, if $\init_C(I)$ is a prime ideal.

Coming back to the more general case of weighting matrices, we say that $M\in\mathbb Z^{d\times n}$ lies in the \emph{Gr\"obner region} GR$^d(I)$ of an ideal $I\subset S$, if there exists a monomial order\footnote{The initial form of an element $f=\sum a_ux^u\in S$ with respect to a monomial order $<$ is the monomial $a_mx^m$ of $f$ which satisfies $a_m\not =0$ and $x^m$ is minimal respect to $<$ among all monomials of $f$ with non-zero coefficient.} $<$ on $S$ such that 
\[
\init_{<}(\init_{M}(I))=\init_<(I).
\]
Such a monomial order is called \emph{compatible with $M$}.
If the ideal $I\subset S$ is (multi-)homogeneous with respect to the $\mathbb Z_{\ge 0}^s$-grading, then
by \cite[Lemma~8.7]{KM16} we have $\mathbb Q^{d\times n}\subset \text{GR}^d(I)$. 
To a given matrix $M\in\mathbb Z^{d\times n}$ one associates a quasi-valuation as follows. 
As above, fix a linear order $\prec$ on $\mathbb Z^d$.

\begin{definition}\label{def: quasi val from wt matrix}
Let $\tilde f=\sum a_{u}x^{u}\in S$ and define $\tilde\val_M:S\setminus\{0\}\to (\mathbb Z^d,\prec) $ by
$
\tilde \val_M(\tilde f):=\min{}_{\prec}\{M\bu\mid a_{\bu}\not=0\}.
$
By \cite[Lemma~3.2]{KM16}, there exists a quasi-valuation $\val_M:A\setminus\{0\}\to (\mathbb Z^d,\prec)$ given for $f\in A$ by
\[
\val_M(f):=\max{}_{\prec}\{\tilde\val_M(\tilde f)\mid \tilde f\in S, \pi(\tilde f)=f\}.
\]
It is called the \emph{quasi-valuation with weighting matrix} $M$. 
\end{definition}

We denote the associated graded algebra (defined analogously as in \eqref{eq:def ass graded} for valuations) of the quasi-valuation $\val_M$ by $\gr_M(A)$. It has the property
\begin{eqnarray}\label{eq: ass graded wt matrix val}
\gr_M(A)\cong  S/\init_M(I).
\end{eqnarray}
In particular, $\gr_M(A)$ inherits the $\mathbb Z^s_{\ge 0}$-grading of $S$, as $I$ is homogeneous with respect to this grading.



From the definition, it is usually hard to explicitly compute the values of a quasi-valuation $\val_M$.
The following proposition makes it more computable, given that $M$ lies in the Gr\"obner region of $I$.
Recall, that if $C_<\subset \text{GR}^d(I)$ is a maximal cone with associated monomial ideal $\init_<(I)$, then $\mathbb B:=\{\bar x^\alpha \mid x^\alpha\not \in \init_<(I)\}$ is a vector space basis for $A$, called \emph{standard monomial basis}.
The monomials $x^\alpha\not\in\init_<(I)$ are called \emph{standard monomials}.
In general, a vector space basis $\mathbb B\subset A$ is called \emph{adapted} to a valuation $\val:A\setminus\{0\}\to (\mathbb Z^d,\prec)$, if $F_{\val\succeq a}\cap\mathbb B$ is a vector space basis for $F_{\val\succeq a}$ for every $a\in\mathbb Z^d$.

\begin{proposition*}(\cite[Proposition~3.3]{KM16})
Let $M\in\text{GR}^d(I)$ and $\mathbb B\subset A$ be a standard monomial basis for the monomial order $<$ on $S$ compatible with $M$. 
Then $\mathbb B$ is adapted to $\val_M$. 
Moreover, for every element $f\in A$ written as $f=\sum \bar x^\alpha a_\alpha$ with $\bar x^\alpha\in \mathbb B$ and $a_\alpha\in \mathbb C$ we have
\begin{equation}\label{eq:compute valM}
\val_M(f)=\min{}_{\prec} \{ M\alpha\mid a_\alpha\not=0\}.
\end{equation}
\end{proposition*}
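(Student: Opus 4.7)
The plan is to first establish the formula $\val_M(\bar x^\alpha) = M\alpha$ for every standard monomial $\bar x^\alpha \in \mathbb B$, and then bootstrap this to the general formula; adaptedness will then follow formally.

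For the standard-monomial case, I would prove the inequality $\val_M(\bar x^\alpha) \succeq M\alpha$ immediately by taking $x^\alpha \in S$ itself as a lift, which has $\tilde\val_M(x^\alpha) = M\alpha$. For the reverse inequality, I would argue by contradiction: if some lift $\tilde f = \sum a_u x^u$ of $\bar x^\alpha$ satisfied $\tilde\val_M(\tilde f) \succ M\alpha$, then necessarily $a_\alpha = 0$ in $\tilde f$, and so $g := \tilde f - x^\alpha \in I$ has $-x^\alpha$ as a term whose $M$-weight $M\alpha$ is strictly smaller than the $M$-weight of every other term of $g$. Hence $\init_M(g) = -x^\alpha$, showing $x^\alpha \in \init_M(I)$. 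Taking $\init_<$ (which fixes a monomial) and invoking compatibility $\init_<(\init_M(I)) = \init_<(I)$ gives $x^\alpha \in \init_<(I)$, contradicting that $x^\alpha$ is a standard monomial.

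For the general formula, I would take $f = \sum a_\alpha \bar x^\alpha$ in the standard basis. The inequality $\val_M(f) \succeq \min_\prec\{M\alpha \mid a_\alpha \neq 0\}$ again follows by lifting to $\tilde f = \sum a_\alpha x^\alpha \in S$ and using $\tilde\val_M(\tilde f) = \min_\prec\{M\alpha \mid a_\alpha \neq 0\}$. The reverse is where the main subtlety lies: assuming some lift $\tilde h$ achieves a strictly larger value, I would form $g := \tilde h - \sum a_\alpha x^\alpha \in I$ and compare $M$-weights term by term. All monomials of $\tilde h$ have $M$-weight strictly above the claimed minimum $m := \min_\prec\{M\alpha \mid a_\alpha \neq 0\}$, while among the standard monomials $x^\alpha$ appearing, precisely those with $M\alpha = m$ contribute to $\init_M(g)$. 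Thus $\init_M(g) = -\sum_{M\alpha = m} a_\alpha x^\alpha$ is a nonzero element of $\init_M(I)$ supported entirely on standard monomials, and its $<$-initial form is a single standard monomial lying in $\init_<(\init_M(I)) = \init_<(I)$, the desired contradiction.

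Finally, for adaptedness, I would note that the explicit formula just proved gives, for any $a\in\mathbb Z^d$, the description
\[
F_{\val_M \succeq a}\cap \mathbb B = \{\bar x^\alpha \in \mathbb B \mid M\alpha \succeq a\},
\]
and if $f = \sum a_\alpha \bar x^\alpha \in F_{\val_M \succeq a}$, then $\min_\prec\{M\alpha \mid a_\alpha \neq 0\}\succeq a$ forces every standard monomial in its expansion to already lie in $F_{\val_M \succeq a}$. Linear independence of this subset is inherited from $\mathbb B$, so it is a basis of $F_{\val_M \succeq a}$.

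I expect the main obstacle to be the upper-bound argument in the general case: it is the step where the compatibility hypothesis between $<$ and $M$ is genuinely used, and one has to track carefully that the term that ``survives'' both the $\init_M$ operation and the subsequent $\init_<$ operation is indeed a standard monomial, so that the contradiction with $\init_<(\init_M(I))=\init_<(I)$ bites. Everything else is essentially bookkeeping with the definitions of $\tilde\val_M$ and $\val_M$.
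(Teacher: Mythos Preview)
The paper does not give its own proof of this proposition: it is quoted verbatim from \cite[Proposition~4.3]{KM16} and used as a black box throughout \S\ref{sec:val and quasival}. So there is nothing in the paper to compare your argument against.

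That said, your proof is correct and is essentially the standard argument (and the one Kaveh--Manon give). The crucial mechanism---that a lift improving on the obvious bound would produce an element of $I$ whose $M$-initial form is supported on standard monomials, whence passing to $\init_<$ and invoking compatibility forces a standard monomial into $\init_<(I)$---is exactly right, and you have tracked the cancellations carefully (in particular, that no monomial of $\tilde h$ can sit at $M$-level $m$, so the $m$-level of $g$ is untouched by $\tilde h$). One minor redundancy: your step~(1) for a single standard monomial is already a special case of step~(2), so you could have gone straight to the general formula; but treating the single-monomial case first does make the exposition cleaner.
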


\begin{remark}\label{rmk:quasival homog}
The proposition implies that $\val_M$ is homogeneous: for $\alpha \in \mathbb Z_{\ge 0}^{k_1+\dots +k_s}$ define $m_\alpha:=(\sum_{j=1}^{k_1} \alpha_{1j},\dots,\sum_{j=1}^{k_s} \alpha_{sj})\in \mathbb Z_{\ge 0}^s$. 
Then $\bar x^\alpha \in A_{m_\alpha}$ for all $\bar x^\alpha\in \mathbb B$.
For $f\in A$ the unique expression $f=\sum \bar x^\alpha a_\alpha$ is also its homogeneous decomposition and by the proposition 
\[
\val_M(f)=\min{}_\prec\{\val_M(\bar x^\alpha)\mid a_\alpha\not =0\}.
\]
\end{remark}

From our point of view, quasi-valuations with weighting matrices are not the primary object of interest. In most cases we are given a valuation $\val:A\setminus\{0\}\to (\mathbb Z^d,\prec)$ whose properties we would like to know.
In particular, we are interested in the generators of the value semi-group and if there are only finitely many. 
The next definition establishes a connection between a given valuation and weighting matrices. It allows us to apply techniques from Kaveh-Manon for quasi-valuations with weighting matrices to other valuations of our interest.

\begin{definition}\label{def: wt matrix from valuation}
Given a valuation $\val:A\setminus\{0\}\to (\mathbb Z^d,\prec)$. 
We define the \emph{weighting matrix of $\val$} associated with the presentation $S/I$ of $A$ by
\[
M_\val:=(\val(\bar x_{ij}))_{i\in[s],j\in [k_i]} \in \mathbb Z^{d\times n}.
\]
That is, the columns of $M_\val$ are given by the images $\val(\bar x_{ij})$ for $i\in[s],j\in [k_i]$.
\end{definition}

Note we slightly abuse notation and write $M_\val$ instead of $M_\val(S/I)$ as we fix the presentation of $A$ from the start.
The following corollary is obtained by an argument very similar to the proof of \cite[Proposition~4.2]{KM16}. We therefore leave its proof to the reader. 

\begin{corollary}\label{cor: NO body val_M}
Let $M\in\mathbb Z^{d\times n}$ be of full rank with $d$ the Krull-dimension of $A$.
If $\init_{M}(I)$ is prime, then $\val_M$ is a valuation whose value semi-group $S(A,\val_M)$ is generated by $\{\val_M(\bar x_{ij})\}_{i\in[s],j\in[k_i]}$. 
In particular, the associated Newton--Okounkov body is given by
\[
\Delta(A,\val_M)=\sum_{i=1}^s\conv(\val_M(\bar x_{ij})\mid j\in [k_i]).
\]
\end{corollary}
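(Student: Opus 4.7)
The plan is to deduce the three claims from the two quoted results of Kaveh--Manon: the identification $\gr_M(A)\cong S/\init_M(I)$ of Lemma~4.4, and the formula for $\val_M$ on the standard monomial basis of Proposition~4.3. The full-rank hypothesis on $M$ together with the Krull-dimension equality $d=\dim A$ guarantees that the columns of $M$ generate a full-rank sublattice of $\mathbb Z^d$, which will be needed at the end to match the dimensions of the Newton--Okounkov body.

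First I would show that $\val_M$ is a genuine valuation, not merely a quasi-valuation. Because $\init_M(I)$ is prime by assumption, $\gr_M(A)\cong S/\init_M(I)$ is an integral domain. For any $f,g\in A\setminus\{0\}$, pick lifts and apply $\tilde\val_M$; the image of $fg$ in the graded component indexed by $\val_M(f)+\val_M(g)$ is the product of the images of $f$ and $g$ in their respective graded components. Since $\gr_M(A)$ is a domain this product is nonzero, forcing $\val_M(fg)=\val_M(f)+\val_M(g)$ rather than the weak inequality. Hence $\val_M$ satisfies (ii) in Definition~\ref{def: valuation}.

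Next I would identify the value semi-group. Let $<$ be a monomial order on $S$ compatible with $M$ (which exists because $M\in\text{GR}^d(I)$ by \cite[Lemma~3.7]{KM16}) and let $\mathbb B\subset A$ be the associated standard monomial basis. By the quoted Proposition~4.3, every $f\in A\setminus\{0\}$ has a unique expansion $f=\sum a_\alpha \bar x^\alpha$ with $\bar x^\alpha\in\mathbb B$, and
\[
\val_M(f)=\min{}_\prec\{M\alpha\mid a_\alpha\neq 0\}.
\]
Each $M\alpha$ is a $\mathbb Z_{\ge 0}$-linear combination of the columns of $M$, i.e.\ of the vectors $\val_M(\bar x_{ij})$, so $S(A,\val_M)$ is contained in the semi-group generated by $\{\val_M(\bar x_{ij})\}$; the reverse containment is trivial. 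This proves that $\{\bar x_{ij}\}$ is a Khovanskii basis.

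Finally, for the Newton--Okounkov body I would invoke homogeneity. By Remark~\ref{rmk:quasival homog}, $\val_M$ is homogeneous, and after the change of coordinates permitted by \cite[Remark 2.6]{IW18} we may assume $\val_M$ is fully homogeneous, so $\val_M(\bar x_{ij})\in\{\epsilon_i\}\times\mathbb Z^{d-s}$. Combined with the semi-group description just proved,
\[
C(A,\val_M)=\cone\bigl(\val_M(\bar x_{ij})\mid i\in[s],\,j\in[k_i]\bigr).
\]
Intersecting with the affine subspace $\{(1,\dots,1)\}\times\mathbb R^{d-s}$ and using that the columns $\val_M(\bar x_{ij})$ with fixed $i$ all lie in the hyperplane with $i$-th coordinate $1$ (and $0$ elsewhere in the first $s$ coordinates) yields the Minkowski decomposition
\[
\Delta(A,\val_M)=\sum_{i=1}^s\conv\bigl(\val_M(\bar x_{ij})\mid j\in[k_i]\bigr).
\]
The main obstacle I anticipate is the first step: showing that the quasi-valuation is a valuation precisely requires passing through the isomorphism $\gr_M(A)\cong S/\init_M(I)$ and tracking how products of leading terms behave, and this is exactly where the primality hypothesis on $\init_M(I)$ is used. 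Once this is in hand, the other two assertions are a direct consequence of Proposition~4.3 and the standard construction of the Newton--Okounkov body from a finitely generated, full-rank value semi-group.
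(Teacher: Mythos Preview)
Your proposal is correct and follows essentially the same route as the paper. The only notable difference is in establishing that $S(A,\val_M)$ is generated by $\{\val_M(\bar x_{ij})\}$: the paper argues via one-dimensional leaves (\cite[Theorem~2.3]{KM16}) and the resulting isomorphism $\gr_M(A)\cong\mathbb C[S(A,\val_M)]$ (\cite[Proposition~2.4]{KM16}), whereas you extract the same conclusion directly from the formula $\val_M(f)=\min_\prec\{M\alpha\}$ of \cite[Proposition~4.3]{KM16}. Your shortcut is slightly more elementary; the paper's detour makes the link to $\gr_M(A)$ explicit. One small point worth tightening: when you identify the columns of $M$ with the values $\val_M(\bar x_{ij})$, you are using that each $x_{ij}$ is a standard monomial, which holds because $I$ (being multi-homogeneous with no linear part) contributes nothing in degree $\epsilon_i$ to $\init_<(I)$.
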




To a full rank valuation $\val:A\setminus\{0\}\to (\mathbb Z^d,\prec)$ we associate a homomorphism of polynomial rings, called \emph{monomial map of $\val$} (see e.g. \cite{MoSh}):
\[
\phi_\val: S \to \mathbb C[y_1,\dots,y_d] \quad \text{ by } \quad \phi_\val(x_{ij}):= y^{\val(\bar x_{ij})}.
\] 
The image $\text{im}(\phi_\val)$ is naturally isomorphic to the semi-group algebra $\mathbb C[S(\val(\bar x_{ij})_{ij})]$, where $S(\val(\bar x_{ij})_{ij})$ is the semi-group generated by $\{\val(\bar x_{ij})\vert i\in[s],j\in [k_i]\}\subset \mathbb Z^d$.
We have
\begin{eqnarray}\label{eq:kernel phi and gr}
S/\ker(\phi_\val)\cong \mathbb C[S(\val(\bar x_{ij})_{ij})]\subset \mathbb C[S(A,\val)]\cong \gr_\val(A).
\end{eqnarray}
Therefore, $\ker(\phi_\val)$ is a binomial prime ideal. The height of $\ker(\phi_\val)$ is $n-\text{rank}(S(\val(\bar x_{ij})_{ij}))=n-\text{rank}(M_\val)$. 

\begin{lemma}\label{lem: wt matrix val in trop}
For every  full rank valuation  $\val:A\setminus\{0\}\to (\mathbb Z^d,\prec)$ we have $\init_{M_\val}(I)\subset \ker(\phi_\val)$.
\end{lemma}

\begin{proof}
Let $f\in I$ and consider $\init_{M_\val}(f)=\sum_{i=1}^s c_ix^{u_i}$ with $c_i\in \mathbb C$ and $u_i\in \mathbb Z_{\ge 0}^n$ for all $i$.
So $Mu_i=Mu_j=:a$, i.e. $\val(\bar x^{u_i})=\val(\bar x^{u_j})$ and $\bar x^{u_i},\bar x^{u_j}\in F_{\val \succeq a}$ for all $1\le i,j\le s$.
As $\val$ has one-dimensional leaves, we have $\sum_{i=1}^s c_i\bar x^{u_i}\in F_{\val \succ a}$, which
implies it is zero in $\gr_\val(A)$.
Hence, by \eqref{eq:kernel phi and gr} we have $\init_{M_\val}(f)\in \ker(\phi_\val)$. 
\end{proof}


Before stating the main theorem relating a given valuation $\val$ with the (quasi-)valuation with weighting matrix $M_\val$ we prove the following proposition that is used in the proof. 

\begin{proposition}\label{prop:same val}
Assume $I$ is homogeneous with respect to the $\mathbb Z_{\ge 0}^s$-grading and Krull dimension of $A=S/I$ is $d$. 
Let $\val:A\setminus\{0\}\to (\mathbb Z^d,\prec)$ be a full rank valuation
with $M_\val\in\mathbb Z^{d\times n}$ the weighting matrix of $\val$. 
Then
\[
\val=\val_{M_\val} \quad \quad \Leftrightarrow \quad \quad \init_{M_\val}(I) \text{ is prime} \text{ and }  \text{rank}(M_\val)=d.
\]
\end{proposition}
\begin{proof}
\begin{itemize}
    \item[``$\Rightarrow$"] Assume $\val=\val_M$, then $\mathbb C[S(A,\val)]=S/\init_{M_{\val}}(I)$ by \eqref{eq: ass graded wt matrix val}. In particular, $\init_{M_\val}(I)$ is binomial and prime. Further, $\text{rank}(M_\val)=\text{rank}(S(A,\val))=\text{Krull dimension of }A$.
    
    \item[``$\Leftarrow$"] Assume $\init_{M_\val}(I)$ is prime and $M_\val$ has rank equal to the Krull dimension of $A$.
    By \cite[Theorem 2.17]{KM16} we obtain $\val=\val_{M_{\val}}$ if and only if $\init_{M_\val}(I)=\ker \phi_\val$. By Lemma~\ref{lem: wt matrix val in trop} we have $\init_{M_\val}(I)\subset \ker \phi_\val$. As $\text{rank}(M_\val)=\text{rank}(S(A,\val))$ both ideals have the same height and are therefore equal.
\end{itemize}
\vspace{-.5cm}
\end{proof}

\begin{theorem}\label{thm: val and quasi val with wt matrix}
Assume $I$ is homogeneous with respect to the $\mathbb Z_{\ge 0}^s$-grading and Krull dimension of $A=S/I$ is $d$. 
Let $\val:A\setminus\{0\}\to (\mathbb Z^d,\prec)$ be a full rank valuation
with associated weighting matrix $M_\val\in\mathbb Z^{d\times n}$.
Then, 
\[
S(A,\val)\quad \text{is generated by} \quad \{\val(\bar x_{ij})\}_{i\in[s],j\in[k_i]} \quad \quad \Leftrightarrow \quad \quad \init_{M_\val}(I) \quad \text{is prime and }\text{rank}(M_\val)=d. 
\]
\end{theorem}

Both directions of the proof rely on the equality $\val=\val_{M_\val}$. 
The equality is false in general, a counterexample is given in \S\ref{sec:exp trop flag} Example~\ref{exp:val vs valM}.
For ``$\Leftarrow$" the equality follows from Proposition~\ref{prop:same val} while for ``$\Rightarrow$" it follows from a direct computation.

\begin{proof}
As $I$ is homogeneous with respect to a positive grading, $M_\val$ lies in the Gr\"obner region of $I$. 
Let $\mathbb B\subset A$ be the standard monomial basis adapted to $\val_{M_\val}$.

\begin{itemize}
\item[``$\Leftarrow$"] Assume $\init_{M_\val}(I)$ is prime and $M_\val$ has full rank. Then we obtain by Proposition~\ref{prop:same val} $S(A,\val)=S(A,\val_{M_\val})\cong S/\init_{M_\val}(I)$.
Hence, by Corollary~\ref{cor: NO body val_M} $S(A,\val)$ is generated by $\{\val(\bar x_{ij})\}_{ij}$.

\item[``$\Rightarrow$"] 
Assume $S(A,\val)$ is generated by $\{\val(\bar x_{ij})\}_{i\in[s],j\in[k_i]}$.
We have 
$\val_{M_\val}(\bar x^\alpha)=M_\val \alpha=\val(\bar x^\alpha)$ for $\bar x^\alpha \in \mathbb B$ by \eqref{eq:compute valM}.
We need to prove $\val=\val_{M_{\val}}$ then $\init_{M_\val}(I)$ is prime and $\text{rank}(M_\val)=$ Krull dimension of $A$ by Proposition~\ref{prop:same val}. 
This follows from:

\smallskip
\emph{Claim:} $\val_{M_\val}(\bar x^{\alpha})\not =\val_{M_\val}(\bar x^{\beta})$ for $\bar x^{\alpha},\bar x^{\beta}\in \mathbb B$ with $\alpha\not=\beta$.

\smallskip
\noindent
\emph{Proof of claim:} Assume there exist $\bar x^{\alpha},\bar x^{\beta}\in \mathbb B$ with $\alpha\not=\beta$ and $\val_{M_\val}(\bar x^{\alpha})=\val_{M_\val}(\bar x^{\beta})$.
Then
\[
\val(\bar x^{\alpha})=M_\val\alpha =\val_{M_\val}(\bar x^{\alpha})=\val_{M_\val}(\bar x^{\beta})=M_\val\beta=\val(\bar x^{\beta}).
\]
This implies that $\val(\bar x^{\alpha}+\bar x^{\beta})\succ \val(\bar x^{\alpha})=\val(\bar x^{\beta})$.
In particular, $\val(\bar x^{\alpha}+\bar x^{\beta})$ does not lie in the semi-group span $\langle \val(\bar x_{ij})\mid i\in [s],j\in [k_i]\rangle=S(A,\val)$, a contradiction by assumption.
\end{itemize}
\vspace{-.65cm}
\end{proof}

Knowing the generators of $S(A,\val)$ \emph{explicitly} has a number of crucial consequences for applications. Given the theorem, they now follow directly from $\init_{M_\val}(I)$ being prime:

\begin{corollary}
Assume $I$ is homogeneous with respect to the $\mathbb Z_{\ge 0}^s$-grading and generated by elements $f\in I$ with $\deg f>\epsilon_i$ for all $i\in[s]$.
Let $\val:A\setminus\{0\}\to (\mathbb Z^d,\prec)$ be a full rank valuation
with $M_\val\in\mathbb Z^{d\times n}$ of full rank. Assume additionally that $\init_{M_\val}(I)$ is prime. 
Then $\val$ is homogeneous and
\begin{itemize}
    \item[(i)] $\gr_\val(A)\cong S/\init_{M_\val}(I)$, 
    \item[(ii)] $\{\bar x_{ij}\}_{i\in[s],j\in[k_i]}$ is a Khovanskii basis for $(A,\val)$, and
    \item[(iii)] $\Delta(A,\val)=\sum_{i=1}^s\conv(\val(\bar x_{ij})\mid j\in[k_i])$.
\end{itemize}
\end{corollary}


\subsection{From weighting matrix to weight vector and tropicalization}\label{sec:wt matrix wt vector}
In this section we summarize how to pass from a weighting matrix to a weight vector which is desirable for applications.
We assume the ideal $I\subset S$ is homogeneous with respect to the $\mathbb Z^d$-grading on $S$ and that $A=S/I$ has Krull dimension $d$.
We consider weighting matrices that lie in the Gr\"obner region of $I$ and for simplicity we assume they have integer entries.

\begin{definition}
Let $M\in \mathbb Z^{d\times n}$ and denote by $M_1,\dots,M_n$ its columns.
A linear map $e:\mathbb Z^d \to \mathbb Z$ is called an \emph{order preserving projection} with respect to $M$ and $I$, if for $eM:=(e(M_1),\dots,e(M_n))$ we have $\init_M(I)=\init_{eM}(I)$.
\end{definition}

The following lemma a reformulation of classical results in Gr\"obner theory (see e.g. \cite{Ba82,Cal02,KM16}). We include it for completeness in view of applications.

\begin{lemma}\label{lem: wt for matrix}
For every $M\in\mathbb Z^{d\times n}$ the order preserving projections with respect to $M$ and $I$ are organized in a polyhedral cone $\sigma^\circ_M(I)\subset \mathbb R^d$.
Moreover, there exist $w\in \mathbb R^n$ with $\init_w(I)=\init_M(I)$.
\end{lemma}

\begin{proof}
Let $\{R_1,\dots,R_s\}$ be a reduced Gr\"obner basis for $\init_M(I)$. Assume the initial form of $R_l$ is of weight $Mm_l$ for $l\in [s]$.
So we have $R_l=\init_M(R_l) + \sum_{j=1}^{q^l}a^l_jx^{u^l_j}$ with $Mm_l\prec Mu_j^l$ for all $j\in [q^l]$.
We define the (open) polyhedral cone of order preserving projections
\begin{eqnarray}\label{eq:cone of order pres proj}
\sigma_M^\circ:=\{e\in \mathbb R^d\mid \langle e, M(m_l-u_j^l) \rangle <0 \text{ for all } l\in[s], j\in [q^l]\}. 
\end{eqnarray}
Applying the linear map defined by $M$ we obtain a set $E_M^\circ:=\{eM\mid e\in \sigma_M^\circ\}$ and
\begin{eqnarray}\label{eq:def cone for M}
E_M \subset \{w\in \mathbb R^n\mid \langle w,m_l-u_j^l\rangle <0 \text{ for all } l\in [s],j\in [q^l]\}=:C_M^\circ. 
\end{eqnarray}
By Lemma~\ref{lem:caldero}, $\sigma_M^\circ$ is nonempty, hence $C_M^\circ$ is nonempty and by definition an open cone in the Gr\"obner fan of $I$.  
\end{proof}

\begin{corollary}\label{cor:wt matrix in trop}
Consider $\val:A\setminus \{0\}\to (\mathbb Z^d,\prec)$ a full rank valuation with associated weighting matrix $M_\val\in \mathbb Z^{d\times n}$.
Then $\init_{M_\val}(I)$ is monomial-free and $C_{M_\val}$ is a cone in $\trop(I)$.
\end{corollary}
\begin{proof}
By Lemma~\ref{lem: wt matrix val in trop} we have $\init_{M_\val}(I)$ is contained in a binomial prime ideal. 
\end{proof}

\section{Application: (tropical) flag varieties and string cones}\label{sec:exp trop flag}

In this subsection we focus on a particular valuation on the \text{Cox} ring of the full flag variety. 
The valuation was defined in the context of representation theory by \cite{FFL15} (see also \cite{Kav15}).
It is of particular interest, as it realizes Littelmann's string polytopes \cite{Lit98,BZ01} as Newton--Okounkov polytopes.
Applying Theorem~\ref{thm: val and quasi val with wt matrix} to this valuation, we solve a conjecture by \cite{BLMM} relating the Minkowski property of string cones (see Definition~\ref{def:MP}) to the tropical flag variety.

\medskip
Consider $\Flag_n$, the full flag variety of flags of vector subspaces of $\mathbb C^n$. 
Its dimension as a projective variety is $N:=\frac{n(n-1)}{2}$.
We embed it into the product of Grassmannians $\Gr_1(\mathbb C^n)\times\dots\times \Gr_{n-1}(\mathbb C^n)$.
Further embedding each Grassmannian via its Pl\"ucker embedding into projective space we obtain
$\Flag_n\hookrightarrow \mathbb P^{\binom{n}{1}-1}\times\dots \times \mathbb P^{\binom{n}{n-1}-1}$.
Set $K:=\binom{n}{1}+\dots + \binom{n}{n-1}$.
In this way, we get the (multi-)homogeneous coordinate ring $A_n$ as a quotient of the total coordinate ring of the product of projective spaces $S:=\mathbb C[p_J\mid 0\not =J\subsetneq [n]]$.
Namely $A_n\cong S/I_n$.
In this case $S$ is multigraded by $\mathbb Z_{\ge 0}^{n-1}$: $\deg(p_J):=\epsilon_{k}$ for $\vert J\vert=k$ and $\{\epsilon_k\}_{k=1}^{n-1}$ standard basis of $\mathbb Z^{n-1}$.
The variables $p_J$ are called \emph{Pl\"ucker variables} and their cosets $\bar p_J\in A_n$ are \emph{Pl\"ucker coordinates}.
The ideal $I_n\subset S$ is generated by the quadratic \emph{Pl\"ucker relations}, see \cite[Theorem~14.6]{MS05}.
It is homogeneous with respect to the $\mathbb Z_{\ge 0}^{n-1}$-grading induced from $S$.
The \emph{tropical flag variety}, denoted $\trop(\Flag_n)$ (as defined in \cite{BLMM}) is the tropicalization of the ideal $I_n\subset S$.
\medskip

Realizing $\Flag_n=SL_n/B$, where $B\subset SL_n$ are upper triangular matrices, one can make use of the representation theory of $SL_n$ to define valuations on $A_n$.
We summarize the necessary representation-theoretic background below.

Let $S_n$ denote the symmetric group and $w_0\in S_n$ its longest element.
By $\w_0$ we denote a reduced expression $s_{i_1}\ldots s_{i_N}$ of $w_0$ in terms of simple transpositions $s_i:=(i,i+1)$. 
Let $\lie{sl}_n$ be the Lie algebra of $SL_n$ and fix a Cartan decomposition $\lie{sl}_n=\lie{n^-}\oplus\lie{h}\oplus\lie{n}^+$ into lower, diagonal and upper triangular traceless matrices.
Let $R=\{\varepsilon_i-\varepsilon_j\}_{i\neq j}$ be the root system of type $\mathtt A_{n-1}$, where $\{\varepsilon_i\}_{i=1}^n$ is the standard basis of $\mathbb R^n$.
Every $\varepsilon_i-\varepsilon_{i+1}$ defines an element\footnote{The element $f_{i}\in \lie{n}^-$ is the elementary matrix with only non-zero entry $1$ in the $(i+1,i)$-position.} $f_{i}\in \lie{n}^-$.

Denote the weight lattice by $\Lambda\cong \mathbb Z^{n-1}$. 
It is spanned by the fundamental weights $\omega_1,\dots,\omega_{n-1}$ and we set $\Lambda^+:=\mathbb Z_{\ge 0}^{n-1}$ with respect to the basis of fundamental weights.
For every $\lambda\in \Lambda^+$ we have an irreducible highest weight representation $V(\lambda)$ of $\lie{sl}_n$.
It is cyclically generated by a highest weight vector $v_\lambda$ (unique up to scaling) over the universal enveloping algebra $U(\lie{n}^-)$.
In particular, for every reduced expression $\w_0=s_{i_1}\cdots s_{i_{N}}$ the set $\mathcal S_{\w_0}:=\{ f_{i_1}^{m_{i_1}} \cdots f_{i_{N}}^{m_{i_{N}}}(v_\lambda) \in V(\lambda) \mid m_{i_j} \geq 0 \}$
is a spanning set for the vector space $V(\lambda)$.

\begin{example}\label{exp:fund rep}
Given a fundamental weight $\omega_k$, we have $V(\omega_k)=\bigwedge^k\mathbb C^n$. 
We can choose the highest weight vector as $v_{\omega_k}:=e_1\wedge\dots\wedge e_k$, where $\{e_i\}_{i=1}^n$ is a basis of $\mathbb C^n$.
In this context Pl\"ucker coordinates $\bar p_{\{j_1,\dots,j_k\}}\in A_n$ are identified with dual basis elements $(e_{j_1}\wedge\dots\wedge e_{j_k})^*\in V(\omega_k)^*$.
\end{example}

Littelmann \cite{Lit98} introduced in the context of quantum groups and crystal bases the so called (weighted) string cones and string polytopes $Q_{\w_0}(\lambda)$.
The motivation is to find monomial bases for the irreducible representations $V(\lambda)$.
Littelmann identifies a linearly independent subset of the spanning set $\mathcal S_{\w_0}$ by introducing the notion of \emph{adapted string} (see \cite[p.~4]{Lit98}) referring to a tuple $(a_1,\dots,a_{N})\in\mathbb Z_{\ge 0}^{N}$.
His basis for $V(\lambda)$ consists of those elements $f_{i_1}^{a_1}\cdots f_{i_{N}}^{a_{N}}(v_\lambda)$ for which $(a_1,\dots,a_{N})$ is adapted.

For a fixed reduced expression $\w_0$ of $w_0\in S_n$ and $\lambda\in\Lambda^+$ he gives a recursive definition of the the \emph{string polytope} $Q_{\w_0}(\lambda)\subset\mathbb R^{N}$ (\cite[p.~5]{Lit98}, see also \cite{BZ01}). 
The lattice points $Q_{\w_0}(\lambda)\cap \mathbb Z^{N}$ are the adapted strings for $\w_0$ and $\lambda$.
The \emph{string cone} $Q_{\w_0}\subset \mathbb R^{N}$ is the convex hull of all $Q_{\w_0}(\lambda)$ for $\lambda\in\Lambda^+$. The \emph{weighted string cone} is defined as
\[
\mathbf Q_{\w_0} := \conv \big( \bigcup{}_{\lambda\in\Lambda^+} \{\lambda\} \times Q_{\w_0}(\lambda)\big)\subset \mathbb R^{n-1+N}.
\]
By definition, one obtains the string polytope from the weighted string cone by intersecting it with the hyperplanes $ \{\lambda\} \times \mathbb R^N$.
In this context, a central question is the following: 
\begin{center}
\emph{Given $\w_0$, does $Q_{\w_0}(\lambda)=a_1Q_{\w_0}(\omega_1)+\dots+a_{n-1}Q_{\w_0}(\omega_{n-1})$ hold for all $\lambda=\sum_{k=1}^{n-1}a_{k}\omega_k\in \Lambda^+$?}
\end{center}

\noindent
\begin{definition}\label{def:MP}
A reduced expression $\w_0$ of $w_0\in S_n$ has the \emph{(strong) Minkowski property}, if $Q_{\w_0}(\lambda)=a_1Q_{\w_0}(\omega_1)+\dots+a_{n-1}Q_{\w_0}(\omega_{n-1})$ holds for all $\lambda=\sum_{k=1}^{n-1}a_{k}\omega_k\in \Lambda^+$.

If $Q_{\w_0}(\rho)=Q_{\w_0}(\omega_1)+\dots+Q_{\w_0}(\omega_{n-1})$ for $\rho:=\omega_1+\dots+\omega_{n-1}$, we say ${\w_0}$ \emph{satisfies MP}. 
\end{definition}

String polytopes can be realized as Newton--Okounkov polytopes, as was done by \cite{Kav15,FFL15}.
The corresponding valuations are defined on the coordinate ring of $G //
U:=\Spec(\mathbb C[G]^U)$, where $U\subset B$ are upper triangular matrices with $1$'s on the diagonal.
By \cite{PV89} $\mathbb C[G// U]$ is isomorphic to the Cox ring $\text{Cox}(\Flag_n)$ of the flag variety.
As a basis for the Picard group we fix the homogeneous line bundle associated to the fundamental weights $L_{\omega_1},\dots,L_{\omega_{n-1}}$ (see e.g. \cite[p.15]{Bri04}).
This way we obtain 
\[
\text{Cox}(\Flag_n)=\bigoplus_{\lambda\in\Lambda^+} H^0(\Flag_n,L_\lambda)\cong \bigoplus_{\lambda \in \Lambda^+} V(\lambda)^*,
\]
where the isomorphism is due to the Borel-Weil theorem.
For every $\w_0$ there exists a fully homogeneous valuation $\val_{\w_0}:\text{Cox}(\Flag_n) \setminus \{0\}\to \Lambda^+\times\mathbb Z^{N}$ called \emph{string valuation}.
By \cite[\S11]{FFL15} it has the properties
\begin{eqnarray}\label{eq:string and val}
C(\text{Cox}(\Flag_n), \val_{\w_0})=\mathbf Q_{\w_0} \quad  \text{ and } \quad P_{\val_{\w_0}}(\lambda)=Q_{\w_0}(\lambda) \text{ for all } \lambda\in \Lambda^+.
\end{eqnarray}
%

As described above, we fixed $\Flag_n\hookrightarrow \mathbb P^{\binom{n}{1}-1}\times \dots \times \mathbb P^{\binom{n}{n-1}-1}$ and want to consider the string valuations on the homogeneous coordinate ring $A_n$.
The following proposition is therefore very useful. 
As a consequence of the standard monomial theory for flag varieties it follows from \cite[Theorem 12.8.3]{LB09}. 
We give the proof for completion.
To simplify notation let $Z:=\mathbb P^{\binom{n}{1}-1}\times \dots \times \mathbb P^{\binom{n}{n-1}-1}$.
Recall that Schubert varieties are of form $X(w):=\overline{BwB}/B\subset SL_n/B$ for $w\in S_n$ and that for $w=w_0s_k$ they are divisors in $\Flag_n$.

\begin{proposition}\label{prop:Cox}
We have $A_n \cong \text{Cox}(\Flag_n)$.
\end{proposition}

\begin{proof}
Let $L_k$ be the ample generator of $\text{Pic}(\mathbb P^{\binom{n}{k}-1})$. 
By \cite[Theorem 11.2.1]{LB09} it pulls back to the line bundle $L_{\omega_k}$ of the Schubert divisor $X(w_0s_{k})\subset\Flag_n\hookrightarrow Z$ and $\text{Pic}(\Flag_n)$ is generated by $\{L_{\omega_k}\}_{k=1}^{n-1}$. 
For every $\lambda=\sum_{k=1}^{n-1}a_k\omega_k\in \mathbb Z^{n-1}_{\ge 0}$ by \cite[Theorem 12.8.3]{LB09} we have a surjection
\[
H^0(Z,L_1^{a_1}\otimes\dots \otimes L_{n-1}^{a_{n-1}})\to H^0(\Flag_n,L_\lambda).
\]
Note that $S=\bigoplus_{a\in \mathbb Z^{n-1}_{\ge 0}} H^0(Z,L_1^{a_1}\otimes\dots \otimes L_{n-1}^{a_{n-1}})$, hence we have a surjection $\pi: S\to \text{Cox}(\Flag_n)$.
Its kernel is the multi-homogeneous ideal generated by elements $f\in H^0(Z,L_1^{a_1}\otimes\dots \otimes L_{n-1}^{a_{n-1}})$ vanishing on $\Flag_n$ for varying $(a_1,\dots,a_{n-1})\in \mathbb Z^{n-1}_{\ge 0}$.
In particular, $\ker (\pi)=I_n$ and $\text{Cox}(\Flag_n)\cong S/I_n=A_n$.
\end{proof}

From now on we consider for a fixed reduced expression $\underline w_0=s_{i_1}\ldots s_{i_N}$ the valuation $\val_{\w_0}$ on $A_n$. 
We explain how to compute $\val_{\w_0}$ on Pl\"ucker coordinates, following \cite{FFL15}.
We use the total order $\prec$ on $\mathbb Z^N$ defined by
$m\prec n$ if and only if, $\sum_{i=1}^N m_i< \sum_{i=1}^N n_i$ or $\sum_{i=1}^N m_i = \sum_{i=1}^N n_i$  and  ${ m}>_{\text{lex}}{n}$.
Now $\val_{\w_0}$ can be computed explicitly\footnote{The action of $\lie{n^-}$ on $\bigwedge^k\mathbb C^n$ necessary to make explicit computations is given by $f_{i,j}(e_{i_1}\wedge\dots\wedge e_{i_k})= \sum_{s=1}^k e_{i_1}\wedge\dots\wedge (f_{i,j}e_{i_s})\wedge \dots \wedge e_{i_k}$, where $f_{i,j}e_{i_s}=\delta_{i,i_s}e_{j+1}$ and $f_i=f_{i,i+1}$.} on Pl\"ucker coordinates by \cite[Proposition~2]{FFL15}: 
\[
\val_{\w_0}(\bar p_{j_1,\dots,j_k})=\min{}_{\prec}\{\bm \in\mathbb Z^{N}_{\ge 0}\mid {\bf f^m}(e_{1}\wedge\dots\wedge e_{k})=e_{j_1}\wedge\dots\wedge e_{j_k}\},
\]
for $\{j_1,\dots,j_k\}\subset [n]$, where ${\bf f^m}=f_{{i_1}}^{m_1}\cdots f_{{i_N}}^{m_N}\in U(\mathfrak{n}^-)$.

We want to apply the results from \S\ref{sec:val and quasival} to the given valuations of form $\val_{\w_0}:A_n\setminus\{0\}\to (\mathbb Z^N,\prec)$.
Let therefore $M_{\w_0}:=(\val_{\w_0}(\bar p_J))_{0\not=J\subsetneq [n]}\in\mathbb Z^{N\times K}$ be the matrix whose columns are given by the images of Pl\"ucker coordinates under $\val_{\w_0}$.
Consider as in \cite{BLMM} the linear form $e:\mathbb Z^N\to\mathbb Z$ given by
\[
-e({\bf m}):=2^{N-1}m_1+2^{N-2}m_2+\ldots +2m_{N-1}+m_N.
\]

\begin{definition}\label{str.wt.vec}
For a fixed reduced expression ${\underline w_0}$ the \emph{weight} of the Pl\"ucker variable $p_J$ is $e(\val_{\w_0}(\bar p_J))$. We define the \emph{weight vector} ${\bf w}_{\underline w_0}:=(e(\val_{\w_0}(\bar p_J)))_{0\not =J\subsetneq [n]}\in \mathbb R^K$.
\end{definition}

The weight vectors ${\bf w}_{\w_0}$ are computed in \cite{BLMM} for $n=4$ and $n=5$. The authors verify that they lie in the tropical flag variety and conjecture this is true in general. 

\begin{lemma}\label{lem:wt in trop flag}
For every reduced expression $\w_0$ we have $\init_{M_{\w_0}}(I_n)=\init_{{\bf w}_{\w_0}}(I_n)$. In particular, ${\bf w}_{\w_0}\in \trop(\Flag_n)$.
\end{lemma}

\begin{proof}
The linear map $e:\mathbb Z^N\to \mathbb Z$ is constructed using the recursive recipe given in \cite[Proof of Lemma 3.2]{Cal02}.
Caldero uses it to define the filtration $\{\mathcal F_m\}_{m\in \mathbb Z}$ on $A_n$.
In Proposition~3.1 and Corollary~3.2 of \cite{Cal02} he shows that this $\mathbb Z$-filtration and the $\mathbb Z^{N+(n-1)}$-filtration $\{F_{\val_{\w_0}}\}$ have the same associated graded algebra, namely $\mathbb C[S(A_n,\val_{\w_0})]$.
To be precise, given a relation in $A_n$ coming from a lifting of a minimal relation in $S(A_n,\val_{\w_0})$ Caldero shows that in the associated graded algebras with respect to $\{\mathcal F_m\}_{m\in \mathbb Z}$ and  $\{F_{\val_{\w_0}}\}$ the same initial relation holds.
The generators of $I_n$ are naturally liftings of minimal relations in $S(A_n,\val_{\w_0})$ as they are of minimal total degree $2$. 
So in particular, $\init_{M_{\w_0}}(I_n)=\init_{{\bf w}_{\w_0}}(I_n)$. The rest follows from Corollary~\ref{cor:wt matrix in trop}.
\end{proof}

\begin{center}
\begin{table}
\centering
\begin{tabular}{| c | c | c | c |}
\hline
$\underline w_0$ & MP & weight vector $-{\bf w}_{\underline w_0}$& $\init_{{\bf w}_{\w_0}}(I_4)$ prime \\
\hline
\begin{tabular}{ l}
String 1: \\
$s_1s_2s_1s_3s_2s_1$ \\
$s_2s_1s_2s_3s_2s_1$ \\
$s_2s_3s_2s_1s_2s_3$ \\
$s_3s_2s_3s_1s_2s_3$
\end{tabular} &
\begin{tabular}{ l}
 \\
yes \\
yes \\
yes \\
yes
\end{tabular}
&
\begin{tabular}{ l}
 \\
$(0, 32, 24, 7, 0, 16, 6, 48, 38, 30, 0, 4, 20, 52)$ \\
$(0, 16, 48, 7, 0, 32, 6, 24, 22, 54, 0, 4, 36, 28)$ \\
$(0, 4, 36, 28, 0, 32, 24, 6, 22, 54, 0, 16, 48, 7)$ \\
$(0, 4, 20, 52, 0, 16, 48, 6, 38, 30, 0, 32, 24, 7)$
\end{tabular}
&
\begin{tabular}{ l}
 \\
yes \\
yes \\
yes \\
yes
\end{tabular} \\
\hline
\begin{tabular}{ l}
String 2: \\
$s_1s_2s_3s_2s_1s_2$ \\
$s_3s_2s_1s_2s_3s_2$
\end{tabular} &
\begin{tabular}{ l}
  \\
yes \\
yes
\end{tabular}&
\begin{tabular}{ l}
  \\
$(0, 32, 18, 14, 0, 16, 12, 48, 44, 27, 0, 8, 24, 56)$ \\
$(0, 8, 24, 56, 0, 16, 48, 12, 44, 27, 0, 32, 18, 14)$
\end{tabular}
&
\begin{tabular}{ l}
  \\
yes \\
yes
\end{tabular} \\
\hline
\begin{tabular}{ l}
String 3: \\
$s_2s_1s_3s_2s_3s_1$
\end{tabular}
&
\begin{tabular}{ l}
  \\
yes
\end{tabular}
&
\begin{tabular}{ l}
  \\
$(0, 16, 48, 13, 0, 32, 12, 20, 28, 60, 0, 8, 40, 22)$
\end{tabular}&
\begin{tabular}{ l}
  \\
yes
\end{tabular}\\
\hline

\begin{tabular}{ l}
String 4: \\
$s_1s_3s_2s_1s_3s_2$ 
\end{tabular}&
\begin{tabular}{ l}
  \\
no
\end{tabular}
&
\begin{tabular}{ l}
  \\
$(0, 16, 12, 44, 0, 8, 40, 24, 56, 15, 0, 32, 10, 26)$
\end{tabular}&
\begin{tabular}{ l}
  \\
no
\end{tabular}\\
\hline
\end{tabular}
\caption{Isomorphism classes of string polytopes (up to unimodular equivalence) for $n=4$ and $\rho$ depending on $\underline w_0$, the property MP, the weight vectors ${\bf w}_{\underline w_0}$ as in Definition~\ref{str.wt.vec}, and primeness of the initial ideals $\init_{{\bf w}_{\underline w_0}}(I_4)$. For details see \cite[\S5]{BLMM}}  
\label{tab:string4}
\end{table}
\end{center}
\vspace{-1cm}
\begin{example}
Consider the reduced expression $\hat\w_0=s_1s_2s_1s_3s_2s_1$ for $w_0\in S_4$.
We compute $\val_{\hat\w_0}(\bar p_{13})=(0,1,0,0,0,0)$ and the weight of $\bar p_{13}$: $e(0,1,0,0,0,0)=-(1\cdot 2^4)=-16$. 
Similarly, we obtain weights for all Pl\"ucker variables and (ordered lexicographically by their indexing sets)
\[
-{\bf w}_{\underline w_0}=(0,32,24,7,0,16,6,48,38,30,0,4,20,52).
\]
Table~\ref{tab:string4} contains all weight vectors (up to sign) for $\Flag_4$ constructed this way.\end{example}

\begin{lemma}\label{lem:full rank}
For every reduced decomposition $\w_0$ of $w_0\in S_n$ we have ${\rm rank}(M_{\w_0})=N+n-1$, i.e $M_{\w_0}$ is of full rank.
\end{lemma}
\begin{proof}
We first restrict our attention to a submatrix of $M_{\w_0}$ having only those columns corresponding to $\val_{\w_0}(\bar p_I)$ for $I\in \binom{[n]}{k}$, denote it by $M_{\w_0}\vert_{|I|=k}$ for any $k\in[n-1]$. 
This matrix corresponds to a full-rank valuation on the homogeneous coordinate ring of the Grassmannian under the Pl\"ucker embedding.
So, ${\rm rank}(M_{\w_0}\vert_{|I|=k})=k(n-k)+1$.
In particular, we deduce for $n\ge 4$ 
\[
{\rm rank}(M_{\w_0})=\min \left\{\sum_{k=1}^{n-1}(k(n-k)+1),N+n-1\right\}=N+n-1.
\]
Direct computations reveals the statement is also true for $n\le 3$.
\end{proof}

Based on computational evidence for $n\le 5$ the authors of \cite{BLMM} state the following conjecture relating the Minkowski property with the tropical flag variety.

\begin{conjecture}
For $n\ge 3$ and $\w_0$ a reduced expression of $w_0\in S_n$ we have ${\bf w}_{\w_0}\in \trop(\Flag_n)$.
Moreover, if ${\bf w}_{\w_0}$ lies inside the relative interior of a maximal prime cone of $\trop(\Flag_n)$, then ${\w_0}$ satisfies MP.
\end{conjecture}

We already proved the first part of the conjecture in Lemma~\ref{lem:wt in trop flag}.
The second part is a consequence of Theorem~\ref{thm: val and quasi val with wt matrix} as we explain below.

\begin{theorem}\label{thm: quasival for string}
Let $\w_0$ be a reduced expression of $w_0\in S_n$ and consider $\bw_{\w_0}\in\mathbb R^{K}$. 
If $\init_{\bw_{\w_0}}(I_n)$ is prime, then $S(A_n,\val_{\w_0})$ is generated by $\{\val_{\w_0}(\bar p_J)\mid 0\not=J\subsetneq [n]\}$, i.e. the Pl\"ucker coordinates form a Khovanskii basis for $(A_n,\val_{\w_0})$.
Moreover, we have
\[
Q_{\w_0}(\rho)= \conv (\val_{\w_0}(\bar p_J))_{  J\in\binom{[n]}{1}} + \dots +  \conv (\val_{\w_0}(\bar p_J))_{  J\in\binom{[n]}{n-1}}.
\]
\end{theorem}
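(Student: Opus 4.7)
The plan is to reduce the theorem to a direct application of the corollary following Theorem~\ref{thm: val and quasi val with wt matrix}, after identifying $\bw_{\w_0}$ with the weighting matrix $M_{\w_0}$ at the level of initial ideals.

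First I would verify the hypotheses of Theorem~\ref{thm: val and quasi val with wt matrix} in this setting. The Pl\"ucker ideal $I_n$ is homogeneous with respect to the $\mathbb Z_{\ge 0}^{n-1}$-grading and is generated by quadratic Pl\"ucker relations; every such generator has multidegree $\epsilon_i + \epsilon_j$, which is strictly greater than each $\epsilon_k$ in the partial order \eqref{eq:partial order Z^s}. The string valuation $\val_{\w_0}\colon A_n\setminus\{0\}\to \Lambda^+\times\mathbb Z^N$ is fully homogeneous by construction and of full rank $d = n-1+N = \dim A_n$, because by \eqref{eq:string and val} its image generates the weighted string cone $\mathbf{Q}_{\w_0}$, which is full-dimensional in $\mathbb R^{n-1+N}$.

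Second, by Definition~\ref{str.wt.vec}, $\bw_{\w_0} = e(M_{\w_0})$ for the explicit linear form $e$ specified just before that definition, and that form satisfies \cite[Lemma~3.2]{Cal02} relative to the columns of $M_{\w_0}$ for our fixed order $\prec$. Hence Lemma~\ref{lem: init wM vs init M} delivers
\[
\init_{\bw_{\w_0}}(I_n) = \init_{M_{\w_0}}(I_n),
\]
so primeness of $\init_{\bw_{\w_0}}(I_n)$ transfers verbatim to $\init_{M_{\w_0}}(I_n)$. Then the corollary following Theorem~\ref{thm: val and quasi val with wt matrix} applies and yields both that $\{\bar p_J\mid 0\neq J\subsetneq[n]\}$ is a Khovanskii basis for $(A_n,\val_{\w_0})$ (so $S(A_n,\val_{\w_0})$ is generated by $\{\val_{\w_0}(\bar p_J)\}_J$), and that
\[
\Delta(A_n,\val_{\w_0}) \;=\; \sum_{k=1}^{n-1}\conv\bigl(\val_{\w_0}(\bar p_J) \,\bigm|\, J\in\binom{[n]}{k}\bigr).
\]

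Finally, since $\val_{\w_0}$ is fully homogeneous, Definition~\ref{def: val sg NO} gives $\Delta(A_n,\val_{\w_0}) = P_{\val_{\w_0}}(1,\dots,1) = P_{\val_{\w_0}}(\rho)$, and the second identity in \eqref{eq:string and val} identifies this with $Q_{\w_0}(\rho)$, producing the claimed Minkowski decomposition. The only delicate point in the plan is confirming $\init_{\bw_{\w_0}}(I_n) = \init_{M_{\w_0}}(I_n)$, i.e.\ that the particular $e$ used in Definition~\ref{str.wt.vec} really does separate the column weights of $M_{\w_0}$ in the manner required by \cite[Lemma~3.2]{Cal02} for the refinement of \eqref{eq:partial order Z^s} by reverse-lexicographic order; once this is granted, everything else is a clean invocation of the machinery of \S\ref{sec:val and quasival}.
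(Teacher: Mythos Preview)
Your proposal is correct and follows essentially the same route as the paper: verify the degree hypothesis on the generators of $I_n$, invoke Lemma~\ref{lem: init wM vs init M} to pass from $\bw_{\w_0}$ to $M_{\w_0}$, apply Theorem~\ref{thm: val and quasi val with wt matrix} (via its corollary), and then identify $\Delta(A_n,\val_{\w_0})$ with $Q_{\w_0}(\rho)$ through \eqref{eq:string and val}. Your closing worry about the linear form $e$ is already handled in the paragraph preceding Definition~\ref{str.wt.vec}, so there is no gap.
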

\begin{proof}
We have $M_{\w_0}$ is of full rank and $\init_{\bw_{\w_0}}(I_n)=\init_{M_{\w_0}}(I_n)$. So, $\init_{M_{\w_0}}(I_n)$ is prime by assumption.
We apply Theorem~\ref{thm: val and quasi val with wt matrix} and deduce that $S(A_n,\val_{\w_0})$ is generated by $\{\val_{\w_0}(\bar p_J)\mid 0\not=J\subsetneq [n]\}$. Then
\[
\Delta(A_n,\val_{\w_0})=\conv (\val_{\w_0}(\bar p_J))_{  J\in\binom{[n]}{1}} + \dots +  \conv (\val_{\w_0}(\bar p_J))_{  J\in\binom{[n]}{n-1}}.
\]
By \cite[\S11]{FFL15} $Q_{\w_0}(\rho)=\Delta(A_n,\val_{\w_0})$, so the claim follows. 
\end{proof}

As a corollary of the above theorem, we prove an even stronger version of the \cite{BLMM}-conjecture.

\begin{corollary}\label{cor: prime implies MP}
Let $\w_0$ be a reduced expression of $w_0\in S_n$ and consider $\bw_{\w_0}\in\mathbb R^{K}$. Then
\[
\init_{\bw_{\w_0}}(I_n)\ \text{ is prime } \Leftrightarrow \ {\w_0} \ \text{ has the (strong) Minkowski property}.
\]
\end{corollary}
\begin{proof}
''$\Rightarrow$" 
By Theorem~\ref{thm: quasival for string}, $S(A_n,\val_{\w_0})$ is generated by $\{\val_{\w_0}(\bar p_{J})\}_{J\subsetneq [n]}$, which are ray generators of $C(A_n, \val_{\w_0})$.
Further, the generators are of form  $\val_{\w_0}(\bar p_{J})=(\omega_k,\cdot)$ for all $k\in[n-1]$ and $\vert J\vert=k$.
In particular, if $\lambda=\sum_{k=1}^{n-1} a_k\omega_k\in\Lambda^+$ we obtain by \eqref{eq:string and val}
\[
Q_{\w_0}(\lambda)= \textstyle \sum_{k=1}^{n-1} a_k  Q_{\w_0}(\omega_k).
\]

"$\Leftarrow$" Assume ${\w_0}$ has the strong Minkowski property. 
Then, by the reverse argument from above, $S(A_n,\val_{\w_0})$ is generated by $\{\val_{\w_0}(\bar p_J)\}_{J\subsetneq [n]}$.
Applying Theorem~\ref{thm: val and quasi val with wt matrix}, which we can do by the proof of Theorem~\ref{thm: quasival for string}, it follows that $\init_{{\bf w}_{\w_0}}(I_n)$ is prime.
\end{proof}

\begin{example}\label{exp:val vs valM}
Corollary~\ref{cor: prime implies MP} implies what we have seen from computations already, namely that $\w_0=s_1s_3s_2s_3s_1s_2\in$ String~4 does not satisfy MP.
The reason is that the element 
\[
\val_{\w_0}(\bar p_{2}\bar p_{134}+\bar p_{1}\bar p_{234})\succeq \min{}_{\prec}\{\val_{\w_0}(\bar p_2\bar p_{134}),\val_{\w_0}(\bar p_1\bar p_{234})\}
\]
is missing as a generator for $S(A_4,\val_{\w_0})$. As $\val_{\w_0}(\bar p_2\bar p_{134})=\val_{\w_0}(\bar p_1\bar p_{234})=(1,0,1,1,0,0)$, we deduce $\val_{\w_0}(\bar p_{2}\bar p_{134}+\bar p_{1}\bar p_{234})\succ (1,0,1,1,0)$.
Hence, this element can not be obtained from the images of Pl\"ucker coordinates under $\val_{\w_0}$ and therefore $\val_{\w_0}(\bar p_{2}\bar p_{134}+\bar p_{1}\bar p_{234})$ has to be added as a generator for $S(A_4,\val_{\w_0})$.
In this example, $\val$ and $\val_{M}$ are in fact different:
\[
\val_M(\bar p_{2}\bar p_{134}+\bar p_{1}\bar p_{234})=(1,0,1,1,0,0), \ \ \ \ \text{while} \ \ \ \ \val(\bar p_{2}\bar p_{134}+\bar p_{1}\bar p_{234})\succ (1,0,1,1,0,0)=:a.
\]
In particular, $\val_M$ does not have $1$-dimensional leaves as the quotient $F_{\val_M\succeq a}/F_{\val_M\succ a}$ is two-dimensional.
For $\val$ this is not the case, as $\overline{p_{2}p_{134}}=\overline{p_{1}p_{234}}$ in $\gr_\val(A_4)$.
\end{example}

\section{Application: Rietsch-Williams valuation from plabic graphs}\label{sec: exp plabic}

In this section we apply Theorem~\ref{thm: val and quasi val with wt matrix} from \S\ref{sec:val and quasival} to the valuation $\val_\G$ defined by Rietsch-Williams for Grassmannians using the cluster structure and Postnikov's plabic graphs in \cite{RW17}.
We identify a class of plabic graphs with non-integral associated Newton--Okounkov polytope.
The same combinatorial objects are used in \cite{BFFHL} to define weight vectors.
For $\Gr_2(\mathbb C^n)$ we show that these weight vectors yield the same toric degeneration of $\Gr_2(\mathbb C^n)$ as the corresponding Newton--Okounkov polytope.

Consider the Grassmannian with its Pl\"ucker embedding $\Gr_k(\mathbb C^n)\hookrightarrow \mathbb P^{\binom{n}{k}-1}$.
In this setting, its homogeneous coordinate ring $A_{k,n}$ is a quotient of the polynomial ring in Pl\"ucker variables $p_J$ with $J\subset [n]$ of cardinality $k$, denoted $J\in \binom{[n]}{k}$.
We quotient by the \emph{Pl\"ucker ideal} $I_{k,n}\subset \mathbb C[p_J\vert J\in \binom{[n]}{k}]$, a homogenous prime ideal generated by all \emph{Pl\"ucker relations}.
More precisely, for $K\in\binom{[n]}{k-1}$ and $L\in\binom{[n]}{k+1}$ let $\sgn(j;K,L):=(-1)^{\#\{l\in L\mid j<l \}+\#\{k\in K\mid k>j\}}$. 
Then the associated Pl\"ucker relation (see e.g. \cite[\S4.3]{M-S}) is of form
\begin{eqnarray}\label{eq: def plucker rel}
R_{K,L}:= \textstyle\sum_{j\in L} \sgn(j;K,L)p_{K\cup \{j\}}p_{L\setminus\{j\}}.
\end{eqnarray}
Then $A_{k,n}=\mathbb C[p_J]_J/I_{k,n}$. 
By \cite{Sco06}, $A_{k,n}$ has the structure of a \emph{cluster algebra} \cite{FZ02}: \emph{clusters} are sets of algebraically independent algebra generators of $A_{k,n}$ over an ambient field of rational functions. 
Together with certain combinatorial data (e.g. a quiver) a cluster forms a \emph{seed}.
They are related by \emph{mutation}, a procedure creating new seeds from a given one, that recovers the whole algebra after possibly infinitely many recursions.
In particular, certain subsets of \emph{Pl\"ucker coordinates} $\bar p_J\in A_{k,n}$ are clusters.
These special clusters are encoded by combinatorial objects called \emph{plabic graphs}\footnote{To be precise, we consider only reduced plabic graphs of trip permutation $\pi_{n-k,n}$, for details see Appendix~\ref{sec:plabic}.} \cite{Pos06}.
We recall plabic graphs in the Appendix~\ref{sec:plabic} below.

For every plabic graph $\mathcal G$ (or more generally every seed) for $\Gr_k(\mathbb C^n)$ in \cite{RW17} they define a valuation $\val_{\mathcal G}:A_{k,n}\setminus\{0\} \to \mathbb Z^{d}$ where $d:=k(n-k)=\dim \Gr_k(\mathbb C^n)$.
The images of Pl\"ucker coordinates $\val_\G(\bar p_J)$ can be computed using the combinatorics of the plabic graph $\mathcal G$.
Please consider Appendix~\ref{sec:appendix val} for the precise definition of the valuation and how to compute it. 
In what follows we use the terminology summarized there without further explanation. 

\begin{center}
\begin{figure}[h]
\centering
\begin{tikzpicture}[scale=.6]

        \begin{scope}[xshift=5cm,scale=.75]
       
\tikzset{->-/.style={decoration={
  markings,
  mark=at position #1 with {\arrow{>}}},postaction={decorate}}}
  \tikzset{-<-/.style={decoration={
  markings,
  mark=at position #1 with {\arrow{<}}},postaction={decorate}}}
    
\draw[->-=.5] (10,2) -- (9,2);    
\draw[->-=.5] (9.25,4) -- (6,4);
\draw[->-=.5] (6,4) -- (9,2);
\draw[->-=.5] (6,4) -- (7,2);
\draw[->-=.5] (6,4) -- (5,2);
\draw[->-=.5] (9,2) -- (8,1);
\draw[->-=.5] (8,1) -- (7,2);
\draw[->-=.5] (7,2) -- (6,1);
\draw[->-=.5] (6,1) -- (5,2);
\draw[->-=.5] (5,2) -- (5,-.25); 
\draw[->-=.5] (8,1) -- (8,-.85);
\draw[->-=.5] (6,1) -- (6,-.85);

\node[right] at (9.25,4) {1};
\node[right] at (10,2) {2};
\node[below] at (8,-.85) {3};
\node[below] at (6,-.85) {4};
\node[below] at (5,-.25) {5};

\draw (7,2) circle [radius=3];

\draw[fill, white] (6,4) circle [radius=.175];  
\draw (6,4) circle [radius=.175];  
\draw[fill, white] (8,1) circle [radius=.175];  
\draw (8,1) circle [radius=.175];  
\draw[fill, white] (6,1) circle [radius=.175];  
\draw (6,1) circle [radius=.175];  

\draw[fill] (9,2) circle [radius=.175]; 
\draw[fill] (7,2) circle [radius=.175]; 
\draw[fill] (5,2) circle [radius=.175];

\node at (8,2) {\scalebox{.35}{$\yng(2)$}};
\node at (6,2) {\scalebox{.35}{$\yng(1)$}};
\node at (9,3) {\scalebox{.35}{$\yng(3)$}};
\node at (9,1) {\scalebox{.35}{$\yng(3,3)$}};
\node at (7,.5) {\scalebox{.35}{$\yng(2,2)$}};
\node at (5.5,.5) {\scalebox{.35}{$\yng(1,1)$}};
\node at (5,3.5) {$\varnothing$};

        \end{scope}
\end{tikzpicture}
\caption{The plabic graph $\G^{\rm rec}$ of type $\pi_{3,5}$ with perfect orientation and source set $\{1,2\}$. Faces are labelled by Young tableaux as described in \S\ref{sec:plabic}.}
\label{fig:ExamplePerfectOrientation}
\end{figure}
\end{center}
\vspace{-1cm}

Let $\G$ be a reduced plabic graph for $\Gr_k(\mathbb C^n)$ with perfect orientation chosen such that $[k]$ is the source set, see e.g. Figure~\ref{fig:ExamplePerfectOrientation}.
Consider the weighting matrix $M_{\mathcal G}:=M_{\val_{\mathcal G}}$ of $\val_{\mathcal G}$ as defined in Appendix~\ref{sec:appendix val}.
The columns of $M_{\mathcal G}$ are $\val_\G(\bar p_{J})$ for $J\in\binom{[n]}{k}$ and the rows $M_1,\dots,M_{d+1}$ are indexed by the faces of the plabic graph $\mathcal G$.
Denote the boundary faces of $\G$ by $F_1,\dots,F_n$, where $F_i$ is adjacent to the boundary vertices $i$ and $i+1$.
Hence, $F_n=F_{\varnothing}$ is the face that does not contribute to the image of $\val_\G$ as $M_n=(0,\dots,0)$ and we omit this row of $M$.
Order the rows of $M_\G$ such that $M_i$ is the row corresponding to the face $F_i$ in $\G$.
In the following lemma we compute the columns of $M$ corresponding to boundary faces of $\G$ explicitly.

\begin{lemma}\label{lem: col corresp to bdy face}
Let $r\in[n-1]$ and $J=\{j_1,\dots,j_k\}\in\binom{[n]}{k}$ with $j_1<\dots j_s\le k<j_{s+1}<\dots <j_k$. Set $[k]\setminus\{j_1,\dots,j_s\}=\{i_1,\dots,i_{k-s}\}$ with $i_1<\dots<i_{k-s}$.
Then the $J^{\text{th}}$ entry of the column $M_r$ is
\[
(M_r)_J=\#\{l\mid r\in[i_l,j_{k-l+1}-1]\},
\]
where $[i_l,j_{k-l+1}]$ is the cyclic interval in $\mathbb Z/n\mathbb Z$.
\end{lemma}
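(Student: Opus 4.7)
My plan follows the combinatorial recipe for $\val_\G$ summarized in Appendix~\ref{sec:appendix val}: the entries of $\val_\G(\bar p_J)$ are read off the distinguished $\prec$-minimal flow from the source set $[k]$ to the sink set $J$ in the perfectly oriented plabic graph $\G$, and the coordinate of this vector indexed by the boundary face $F_r$ records the number of paths of the flow that enclose $F_r$. So the proof reduces to (a) identifying the minimal flow and (b) counting its enclosures of $F_r$ path by path.

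First I would pin down the minimal flow. The indices in $J \cap [k] = \{j_1, \ldots, j_s\}$ are simultaneously sources and sinks and are matched by the trivial (length-zero) constant paths at $j_l$ for $l \le s$. The remaining sources $\{i_1 < \cdots < i_{k-s}\} = [k] \setminus J$ lie on the boundary arc $1, \ldots, k$, while the remaining sinks $\{j_{s+1} < \cdots < j_k\} = J \setminus [k]$ lie on the complementary arc $k+1, \ldots, n$. Planar non-crossingness then forces the nested matching between them: for each $l \in \{s+1, \ldots, k\}$ the source $i_{k-l+1}$ is paired with the sink $j_l$, determining a unique nontrivial path $P_l$ of the flow.

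Second I would translate each path into the claimed cyclic interval. The path $P_l$ from $i_{k-l+1}$ to $j_l$, closed up by the boundary arc running cyclically from $j_l$ through $j_l+1, j_l+2, \ldots$ up to $i_{k-l+1}$, bounds a disk region containing exactly those boundary faces $F_r$ for which $r \in [j_l, i_{k-l+1}]$ viewed cyclically in $\mathbb{Z}/n\mathbb{Z}$. Hence $P_l$ contributes $1$ to $(M_r)_J$ precisely when $r$ lies in this interval; trivial paths (indices $l \le s$, for which $i_{k-l+1}$ does not occur and the interval is interpreted as empty) contribute $0$. Summing over $l$ yields the claimed formula.

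The principal technical subtlety I expect to deal with is aligning the orientation conventions of Appendix~\ref{sec:appendix val} with the chosen direction of the cyclic interval: which side of the oriented path $P_l$ is to be counted, and whether the endpoints of $[j_l, i_{k-l+1}]$ should be included or excluded. Once this is settled, the argument becomes a direct planar-topology count on top of the explicit description of the minimal flow.
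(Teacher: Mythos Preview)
Your proposal is correct and follows essentially the same approach as the paper's proof: both identify the lazy paths for $l\le s$, observe that the non-crossing condition forces the nested matching $i_{k-l+1}\to j_l$ for $l>s$, and then note that the boundary faces to the left of such a path are exactly the $F_r$ with $r$ in the cyclic interval $[j_l,i_{k-l+1}]$. The paper's write-up is terser (it simply asserts the left-side description and remarks that $k\notin[j_l,i_{k-l+1}]$ since $i_{k-l+1}<k<j_l$), while you spell out the planar closure argument and flag the orientation convention explicitly, but there is no substantive difference.
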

\begin{proof}
Let $\bfff=\{\p_{j_1},\dots,\p_{j_k}\}$ be a flow (see Definition~\ref{def:flow}) from $[k]$ to $J$, where $\p_{j_i}$ denotes the path with sink $j_i$. 
The paths $\p_{j_r}$ for $r\le s$ are ``lazy paths", starting and ending at $j_r$ without moving.
Let $[k]\setminus \{j_1,\dots,j_s\}=\{i_1,\dots,i_{k-s}\}$ with $i_1<\dots<i_{k-s}$.
Hence, for $k-l+1>s$ we have non-trivial paths $\p_{j_{k-l+1}}$ with source $i_{l}$ and sink $j_{k-l+1}$.
To its left are all boundary faces $F_r$ with $r$ in the cyclic interval $[i_l,j_{k-l+1}-1]$. 
\end{proof}

Recall, the tropicalization of an ideal defined in \eqref{eq:trop}. The \emph{tropical Grassmannain}\cite{SS04} refers to the tropicalization of the Pl\"ucker ideal $I_{k,n}\subset \mathbb C[p_J]_J$, we denote it by $\trop(\Gr_k(\mathbb C^n))\subset \mathbb R^{\binom{n}{k}}$.
It contains an $n$-dimensional linear subspace $L_{I_{k,n}}:=\{w\in \mathbb R^{\binom{n}{k}}\mid \init_{w}(I_{k,n})=I_{k,n}\}$, called \emph{lineality space}, see \cite[page 393]{SS04}.
In the following corollary we treat columns of $M$ as weight vectors for Pl\"ucker variables. 

\begin{corollary}\label{cor: col in lin sp}
For all $r\in[n-1]$ we have $M_r\in L_{I_{k,n}}$.
\end{corollary}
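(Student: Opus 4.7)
The plan is to show that each column $M_r$ defines a weight under which every quadratic Pl\"ucker relation is homogeneous; since the $R_{K,L}$ generate $I_{k,n}$, this forces $\init_{M_r}(I_{k,n})=I_{k,n}$, i.e.\ $M_r\in L_{I_{k,n}}$. Concretely, I will exhibit $(M_r)_J$ as an affine function of the indicator of $J$, namely $(M_r)_J = c + \sum_{j\in J} a_j$ for some integers $c, a_1,\ldots,a_n$ depending on $r$. Granted this, the weight of every monomial $p_{K\cup\{j\}}p_{L\setminus\{j\}}$ appearing in $R_{K,L}$ equals $2c + \sum_{i\in K\sqcup L} a_i$ (multiset sum), which is independent of $j\in L$, so $\init_{M_r}(R_{K,L})=R_{K,L}$ for every Pl\"ucker relation.

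To produce this form I will invoke Lemma~\ref{lem: col corresp to bdy face} and split into the cases $r>k$ and $r<k$, using that $i_{k-l+1}<k<j_l$ forces the cyclic interval $[j_l,i_{k-l+1}]$ to wrap through $n$ (in particular $k\notin[j_l,i_{k-l+1}]$). For $r\in\{k+1,\ldots,n\}$, membership $r\in[j_l,i_{k-l+1}]$ reduces to $j_l\le r$; as $l$ ranges over $\{s+1,\ldots,k\}$, $j_l$ runs through $J\cap\{k+1,\ldots,n\}$, so Lemma~\ref{lem: col corresp to bdy face} gives $(M_r)_J = |J\cap[k+1,r]|$, visibly linear in the indicator of $J$. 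For $r\in\{1,\ldots,k-1\}$ the condition becomes $i_{k-l+1}\ge r$; since $i_{k-l+1}$ runs through $[k]\setminus J$, one obtains $(M_r)_J = \bigl|([k]\setminus J)\cap[r,k]\bigr| = (k-r+1) - |J\cap[r,k]|$, which is affine of the required form.

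The main delicate point is the parsing of the cyclic interval $[j_l,i_{k-l+1}]$: realizing that the inequalities $i_{k-l+1}<k<j_l$ pin down its orientation (wrapping through $n$, not through $k$) is what makes the two case analyses clean. The additive constant $k-r+1$ appearing in the second case is harmless, since it contributes equally to every monomial of any fixed Pl\"ucker relation and therefore does not affect whether the relation is preserved by $\init_{M_r}$; if preferred, one may absorb it by replacing each $a_j$ with $a_j + (k-r+1)/k$, using that $|J|=k$ is fixed.
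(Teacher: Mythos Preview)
Your proposal is correct, and the overall strategy matches the paper's: show that every quadratic Pl\"ucker relation $R_{K,L}$ is $M_r$-homogeneous, whence $\init_{M_r}(I_{k,n})=I_{k,n}$. The execution, however, is genuinely different. The paper works directly with the sum $(M_r)_J+(M_r)_{J'}$ for a monomial $p_Jp_{J'}$ of $R_{K,L}$ and argues, via a bookkeeping of the pairs $(l_q,i_q)$ assembled from both $J$ and $J'$, that this sum depends only on the multiset $J\sqcup J'=K\sqcup L$. You instead compute $(M_r)_J$ itself in closed form, obtaining $|J\cap[k+1,r]|$ for $r>k$ and $(k-r+1)-|J\cap[r,k]|$ for $r<k$, both affine-linear in the indicator vector of $J$. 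This is a bit sharper: it exhibits $M_r$ explicitly as an element of the standard description of the lineality space $L_{I_{k,n}}=\{(\sum_{j\in J}a_j)_J : a\in\mathbb R^n\}$ (the additive constant being absorbed since $|J|=k$, equivalently since $I_{k,n}$ is homogeneous). The paper's route avoids the case split $r\lessgtr k$ but pays for it with heavier multiset notation; your route trades a short case analysis for a more transparent structural identification.
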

\begin{proof}
Consider a Pl\"ucker relation $R_{K,L}$ with $K\in\binom{[n]}{k-1}$ and $L\in\binom{[n]}{k+1}$ of form \eqref{eq: def plucker rel}. 
Every term in $R_{K,L}$ equals $\pm p_Jp_{J'}$ for some $J,J'\in\binom{[n]}{k}$.
We can rewrite the formula for $(M_r)_J$ as follows
\[
(M_r)_J=\left\{ \begin{matrix} 
    \vert J\setminus [r-1]\vert, & \text{if } r<k,\\
    \vert J\cap [r+1,n]\vert , & \text{if } r\ge k.
\end{matrix}\right.
\]
Let $J=\{j_1,\dots,j_k\}$, $J'=\{j'_1,\dots,j'_k\}$ and define the sequence $S:=(j_1,\dots,j_k,j'_1,\dots,j'_k)$.
For any set $N$ we denote by $S\cap N$ the sequence obtained from $S$ when deleting all entries that are not elements of $N$.
Similarly, let $S\setminus N$ be the sequence obtained from $S$ when deleting all entries that do belong to $N$.
Further, for any sequence $S'$ let $\vert S'\vert$ denote its length.
Then 
\[
(M_r)_J+(M_r)_{J'}=\left\{
    \begin{matrix}
    \vert S\setminus [r-1]\vert, & \text{if } r<k,\\
    \vert S\cap [r+1,n]\vert , & \text{if } r\ge k.
    \end{matrix}
\right.
\]
Note that the right hand side only depends on the entries of $S$ regardless of the ordering.
Further, for all pairs $J,J'$ corresponding to monomials in $R_{K,L}$ the sequence $S$ is the same up to reordering.
Hence, $\init_{M_r}(R_{K,L})=R_{K,L}$ for all $r\in[n-1]$.
\end{proof}

Recall the plabic weight vector $\bw_\G$ from Definition~\ref{def: plabic deg}. The following proposition establishes the connection to what we have seen in \S\ref{sec:val and quasival}.
In terms of the weighting matrix $M_\G$, we observe
\[
\bw_\G= \textstyle \sum_{E_j \text{ interior face of }\G} M_j,
\]
where the sum contains exactly those $M_j$ corresponding to interior faces of $\G$.

\begin{table}[h]
    \centering
    \begin{tabular}{cc}
            \begin{tabular}{c|ccccccc}
        $\val_{\G^{\rm rec}}$ & $e_{35}$ & $e_{25}$ & $e_{45}$ & $e_{15}$ & $e_{12}$ & $e_{23}$ & $e_{34}$ \\ \hline
        $\bar p_{12}$  & 0 & 0 & 0 & 0 & 0 & 0 & 0 \\
        $\bar p_{13}$  & 0 & 0 & 0 & 0 & 1 & 0 & 0 \\
        $\bar p_{14}$  & 0 & 0 & 0 & 0 & 1 & 1 & 0 \\
        $\bar p_{15}$  & 0 & 0 & 0 & 0 & 1 & 1 & 1 \\ 
        $\bar p_{23}$  & 0 & 0 & 0 & 1 & 1 & 0 & 0 \\
        $\bar p_{24}$  & 0 & 0 & 0 & 1 & 1 & 1 & 0 \\
        $\bar p_{25}$  & 0 & 0 & 0 & 1 & 1 & 1 & 1 \\
        $\bar p_{34}$  & 0 & 1 & 0 & 1 & 2 & 1 & 0 \\
        $\bar p_{35}$  & 0 & 1 & 0 & 1 & 2 & 1 & 1 \\
        $\bar p_{45}$  & 1 & 1 & 0 & 1 & 2 & 2 & 1 
    \end{tabular} &  
    \begin{tabular}{c|ccccccc}
        ${\bf g}_{\G^{\rm rec}}$ & $f_{35}$ & $f_{25}$ & $f_{45}$ & $f_{15}$ & $f_{12}$ & $f_{23}$ & $f_{34}$ \\ \hline
        $\bar p_{12}$  & 0 & 0 & 0 & 0 & 1 & 0 & 0 \\
        $\bar p_{13}$  & 0 & -1 & 0 & 1 & 0 & 1 & 0 \\
        $\bar p_{14}$  & -1 & 0 & 0 & 1 & 0 & 0 & 1 \\
        $\bar p_{15}$  & 0 & 0 & 0 & 1 & 0 & 0 & 0 \\ 
        $\bar p_{23}$  & 0 & 0 & 0 & 0 & 0 & 1 & 0 \\
        $\bar p_{24}$  & -1 & 1 & 0 & 0 & 0 & 0 & 1 \\
        $\bar p_{25}$  & 0 & 1 & 0 & 0 & 0 & 0 & 0 \\
        $\bar p_{34}$  & 0 & 0 & 0 & 0 & 0 & 0 & 1 \\
        $\bar p_{35}$  & 1 & 0 & 0 & 0 & 0 & 0 & 0 \\
        $\bar p_{45}$  & 0 & 0 & 1 & 0 & 0 & 0 & 0 
    \end{tabular} 
    \end{tabular}

    \caption{The images of Pl\"ucker coordinates under the valuation $\val_{\G^{\rm rec}}$ for $\G^{\rm rec}$ as in Figure~\ref{fig:ExamplePerfectOrientation} on the left and under the valuation ${\bf g}_{\G^{\rm rec}}$ on the right. See \S\ref{sec:appendix val}\&\ref{sec:RW and GHKK} for details.}
    \label{tab:val 2-5}
\end{table}

\begin{example}\label{exp:plabic val}
Consider the plabic graph $\G=\mathcal{G}^{\rm rec}$ with perfect orientation from Figure \ref{fig:ExamplePerfectOrientation} and source set $[2]$. 
We compute $\deg_\G(p_J)$ and $\val_\G(\bar p_J)$ for all $J\in\binom{[5]}{2}$.
Order the faces of $\G$ by
\[
F_{35}=\scalebox{.35}{$\yng(1)$},\quad F_{25}=\scalebox{.35}{$\yng(2)$}, \quad F_{45}=\varnothing, \quad F_{15}=\scalebox{.35}{$\yng(3)$}, \quad F_{12}=\scalebox{.35}{$\yng(3,3)$}, \quad F_{23}=\scalebox{.35}{$\yng(2,2)$}\quad \text{and} \quad F_{34}=\scalebox{.35}{$\yng(1,1)$}.
\]
For example, consider $J=\{2,4\}$.
There are two flows, $\bfff_1$ and $\bfff_2$ from $[2]$ to $J=\{2,4\}$. Both consist of only one path from $1$ to $4$. 
One of them, say $\bfff_1$, has faces labelled by $\scalebox{.35}{$\yng(3)$},\scalebox{.35}{$\yng(3,3)$},\scalebox{.35}{$\yng(2,2)$}, \scalebox{.35}{$\yng(1,1)$}$ to its left while the other $\bfff_2$ has faces $\scalebox{.35}{$\yng(3)$},\scalebox{.35}{$\yng(3,3)$},\scalebox{.35}{$\yng(2,2)$},\scalebox{.35}{$\yng(1,1)$}$ and $\scalebox{.35}{\yng(2)}$ to its left. 
Then with respect to the above order of coordinates (corresponding to faces of $\G$) on $\mathbb Z^7$ we have
\[
\wei(\bfff_1)=(0,0,0,1,1,1,0) \ \text{ and } \ \wei(\bfff_2)=(0,1,0,1,1,1,0). 
\]
As $\deg_\G(\bfff_1)=0$ and $\deg_\G(\bfff_2)=1$, we have $\val_\G(\bar p_{24})=(1,0,1,1,0,0)$ and $\deg_\G(p_{24})=0$.
All other $\val_\G(\bar p_J)$ and $\deg_\G(p_J)$ can be recovered from Table~\ref{tab:val 2-5}.
\end{example}


\begin{figure}
    \centering

\begin{tikzpicture}[scale=.5]
\tikzset{->-/.style={decoration={
  markings,
  mark=at position #1 with {\arrow{>}}},postaction={decorate}}}
  \tikzset{-<-/.style={decoration={
  markings,
  mark=at position #1 with {\arrow{<}}},postaction={decorate}}}
    
\draw[->-=.5] (11,2) -- (9,2);    
\draw[->-=.5] (11,4) -- (6,4);
\draw[->-=.5] (6,4) -- (0,2);
\draw[->-=.5] (6,4) -- (9,2);
\draw[->-=.5] (6,4) -- (7,2);
\draw[->-=.5] (6,4) -- (5,2);
\draw[->-=.5] (6,4) -- (3,2);
\draw[->-=.5] (6,4) to [out=185,in=45] (-2,2);
\draw[->-=.5] (9,2) -- (8,1);
\draw[->-=.5] (8,1) -- (7,2);
\draw[->-=.5] (7,2) -- (6,1);
\draw[->-=.5] (6,1) -- (5,2);
\draw[->-=.5] (5,2) -- (4,1); 
\draw[->-=.5] (4,1) -- (3,2);
\draw[->-=.5] (8,1) -- (8,0);
\draw[->-=.5] (6,1) -- (6,0);
\draw[->-=.5] (4,1) -- (4,0);
\draw[->-=.5] (-1,1) -- (-1,0);
\draw[->-=.5] (0,2) -- (-1,1);
\draw[->-=.5] (-1,1) -- (-2,2);
\draw[->-=.5] (-2,2) -- (-3,0);
\draw[->-=.7] (3,2) -- (2.5,1.5);
\draw[->-=.5] (.5,1.5) -- (0,2);

\node at (1.5,1.5) {$\dots$};
\node[right] at (11,4) {1};
\node[right] at (11,2) {2};
\node[below] at (8,0) {3};
\node[below] at (6,0) {4};
\node[below] at (4,0) {5};
\node[below] at (-1,0) {$n-1$};
\node[below] at (-3,0) {$n$};
\node[below] at (1.5,0) {$\dots$};

\draw[fill, white] (6,4) circle [radius=.175];  
\draw (6,4) circle [radius=.175];  
\draw[fill, white] (8,1) circle [radius=.175];  
\draw (8,1) circle [radius=.175];  
\draw[fill, white] (6,1) circle [radius=.175];  
\draw (6,1) circle [radius=.175];  
\draw[fill, white] (4,1) circle [radius=.175];  
\draw (4,1) circle [radius=.175];  
\draw[fill, white] (-1,1) circle [radius=.175];  
\draw (-1,1) circle [radius=.175];  

\draw[fill] (9,2) circle [radius=.175]; 
\draw[fill] (7,2) circle [radius=.175]; 
\draw[fill] (5,2) circle [radius=.175]; 
\draw[fill] (3,2) circle [radius=.175]; 
\draw[fill] (0,2) circle [radius=.175]; 
\draw[fill] (-2,2) circle [radius=.175]; 

\node at (8,2) {\small $E_3$};
\node at (6,2) {\small $E_4$};
\node at (4,2) {\small $E_5$};
\node at (1,3) {\small $E_{n-1}$};
\node at (10,3) {\small $F_1$};
\node at (10,1) {\small $F_2$};
\node at (7,.5) {\small $F_3$};
\node at (5,.5) {\small $F_4$};
\node at (-1.875,.5) {\small $F_{n-1}$};
\end{tikzpicture}

    \caption{The plabic graph $\mathcal G^{\rm rec}$ for $\Gr_{2}(\mathbb C^n)$.}
    \label{fig:Grec2n}
\end{figure}

\begin{proposition}\label{prop: plabic lin form}
For every plabic graph $\G$ for $\Gr_2(\mathbb C^n)$ we have $\init_{M_\G}(I_{2,n})=\init_{\bw_G}(I_{2,n})$.
\end{proposition}

In general, for $k\ge 3$ and $n\ge 6$ the statement of the proposition is false, see Example~\ref{exp:3-6} and Theorem~\ref{thm:hexa not prime} below.

\begin{proof}
Fix the plabic graph $\mathcal G=\mathcal G^{\rm rec}$.
We show that for every Pl\"ucker relation $R_{i,j,k,l}:=p_{ij}p_{kl}-p_{ik}p_{jl}+p_{il}p_{jk}$ with $1\le i<j<k<l\le n$ we have $\init_{M_{\mathcal G}}(R_{ijkl})=\init_{{\bf w}_{\mathcal G}}(R_{ijkl})$.
As the $R_{ijkl}$ form a Gr\"obner basis for both initial ideals it follows $\init_{M_{\mathcal G}}(I_{2,n})=\init_{{\bf w}_{\mathcal G}}(I_{2,n})$.
For arbitrary plabic graphs the claim follows as both initial ideals change in the same way under the mutation move (M1). 
To see this compare \cite[\S13]{RW17} with \cite[\S6]{BFFHL} (or for more details \cite[\S3.3.4]{Thesis}). So without loss of generality we can focus on $\G^{\rm rec}$.

Consider $\mathcal G^{\rm rec}$ as in Figure~\ref{fig:Grec2n} and order the faces by $E_3,\dots,E_{n-1},F_1,\dots,F_n$.
By Corollary~\ref{cor: col in lin sp} we may focus only on the columns of $M_{\mathcal G^{\rm rec}}$ corresponding to the interior faces $E_3,\dots, E_{n-1}$.
We compute the leading monomials of the flow polynomials and the entries of ${\bf w}_{\mathcal G^{\rm rec}}$:
\begin{eqnarray*}
\begin{matrix}
\val_{\mathcal G^{\rm rec}}(\bar p_{1i}) &=& (0,\dots\dots\dots,0), &\quad {\bf w}_{\mathcal G^{\rm rec}}(\bar p_{1i}) &=& 0,\\ 
\val_{\mathcal G^{\rm rec}}(\bar p_{2i}) &=& (0,\dots\dots\dots,0), &\quad  {\bf w}_{\mathcal G^{\rm rec}}(\bar p_{2i}) &=& 0,\\
\val_{\mathcal G^{\rm rec}}(\bar p_{ij}) &=& (1,\dots,1,0,\dots,0), &\quad {\bf w}_{\mathcal G^{\rm rec}}(\bar p_{ij}) &=& i-2, 
\end{matrix}
\end{eqnarray*}
where $i<j$ and the $1$'s in $\val_{G^{\rm rec}}(\bar p_{ij})$ correspond to the faces $E_3,\dots,E_i$.
In particular, for $R_{ijkl}$ with $1\le i<j<k<l\le n$ we deduce $\init_{M_{\mathcal G^{\rm rec}}}(R_{ijkl})= -p_{ik}p_{jl} + p_{il}p_{jk} = \init_{{\bf w}_{\mathcal G^{\rm rec}}}(R_{ijkl})$.
\end{proof}

\begin{figure}[h]
    \centering
\begin{tikzpicture}[scale=.4]
\tikzset{->-/.style={decoration={
  markings,
  mark=at position #1 with {\arrow{>}}},postaction={decorate}}}
  \tikzset{-<-/.style={decoration={
  markings,
  mark=at position #1 with {\arrow{<}}},postaction={decorate}}}
    
\draw (0,0) circle [radius=5];  
  
\draw[-<-=.5] (-1,2) -- (1,2);
\draw[->-=.5] (1,2) -- (2.25,0);
\draw[->-=.5] (2.25,0) -- (1,-2);
\draw[->-=.5] (1,-2)-- (-1,-2);
\draw[-<-=.5] (-1,-2) -- (-2.25,0);
\draw[-<-=.5] (-2.25,0)-- (-1,2);
\draw[-<-=.5] (-1,2)  -- (-1,3.5);
\draw[->-=.5] (-1,3.5)-- (1,3.5);
\draw[->-=.5] (1,3.5) -- (1,2);
\draw[-<-=.5] (2.25,0) -- (3.5,-1);
\draw[->-=.7] (3.5,-1) -- (2.375,-3);
\draw[-<-=.5] (2.375,-3) -- (1,-2);
\draw[->-=.5] (-2.25,0) -- (-3.5,-1);
\draw[->-=.5] (-3.5,-1) -- (-2.375,-3);
\draw[-<-=.5] (-2.375,-3) -- (-1,-2);
\draw[->-=.5] (-1,4.9) -- (-1,3.5);
\draw[->-=.5] (1,4.9) -- (1,3.5);
\draw[->-=.5] (4.7,-1.75) -- (3.5,-1);
\draw[->-=.5] (2.375,-3) -- (3.35,-3.75);
\draw[->-=.5] (-3.5,-1) -- (-4.7,-1.75);
\draw[->-=.5] (-2.375,-3) -- (-3.35,-3.75);

\draw[fill] (1,3.5) circle [radius=.175];  
\draw[fill] (-1,2) circle [radius=.175];  
\draw[fill] (2.25,0) circle [radius=.175];  
\draw[fill] (2.375,-3) circle [radius=.175];  
\draw[fill] (-1,-2) circle [radius=.175];  
\draw[fill] (-3.5,-1) circle [radius=.175];  

\draw[fill, white] (-1,3.5) circle [radius=.175];  
\draw (-1,3.5) circle [radius=.175];  
\draw[fill, white] (1,2) circle [radius=.175];  
\draw (1,2) circle [radius=.175];  
\draw[fill, white] (3.5,-1) circle [radius=.175];  
\draw (3.5,-1) circle [radius=.175];
\draw[fill, white] (1,-2) circle [radius=.175];  
\draw (1,-2) circle [radius=.175];  
\draw[fill, white] (-2.25,0) circle [radius=.175];  
\draw (-2.25,0) circle [radius=.175];  
\draw[fill, white] (-2.375,-3) circle [radius=.175];  
\draw (-2.375,-3) circle [radius=.175];  

\node[above] at (-1,4.9) {1};
\node[above] at (1,4.9) {2};
\node[right] at (4.7,-1.75) {3};
\node[right] at (3.35,-3.75) {4};
\node[left] at (-4.7,-1.75) {6};
\node[left] at (-3.35,-3.75) {5};

\node at (0,0) {\scalebox{.3}{ $\yng(2,1)$}};
\node at (0,2.75) {\scalebox{.3}{$\yng(2)$}};
\node at (0,4.25) {\scalebox{.3}{$\yng(3)$}};
\node at (3,1.75) {\scalebox{.3}{$\yng(3,3)$}};
\node at (2.3,-1.5) {\scalebox{.3}{$\yng(3,3,1)$}};
\node at (3.7,-2.25) {\scalebox{.3}{$\yng(3,3,3)$}};
\node at (0,-3.5) {\scalebox{.3}{$\yng(2,2,2)$}};
\node at (-2.3,-1.5) {\scalebox{.3}{$\yng(1,1)$}};
\node at (-3.7,-2.25) {\scalebox{.3}{$\yng(1,1,1)$}};
\node at (-3,1.75) {{$\varnothing$}};

\begin{scope}[xshift=15cm]
\draw (0,0) circle [radius=5];  
  
\draw[-<-=.5] (-1,2) -- (1,2);
\draw[-<-=.5] (1,2) -- (2.25,0);
\draw[-<-=.5] (2.25,0) -- (1,-2);
\draw[->-=.5] (1,-2)-- (-1,-2);
\draw[-<-=.5] (-1,-2) -- (-2.25,0);
\draw[-<-=.5] (-2.25,0)-- (-1,2);
\draw[-<-=.5] (-1,2)  -- (-1,3.5);
\draw[-<-=.5] (-1,3.5)-- (1,3.5);
\draw[-<-=.5] (1,3.5) -- (1,2);
\draw[-<-=.5] (2.25,0) -- (3.5,-1);
\draw[->-=.7] (3.5,-1) -- (2.375,-3);
\draw[->-=.5] (2.375,-3) -- (1,-2);
\draw[->-=.5] (-2.25,0) -- (-3.5,-1);
\draw[-<-=.5] (-3.5,-1) -- (-2.375,-3);
\draw[-<-=.5] (-2.375,-3) -- (-1,-2);

\draw[-<-=.5] (-1,4.9) -- (-1,3.5);
\draw[->-=.5] (1,4.9) -- (1,3.5);
\draw[->-=.5] (4.7,-1.75) -- (3.5,-1);
\draw[-<-=.5] (2.375,-3) -- (3.35,-3.75);
\draw[->-=.5] (-3.5,-1) -- (-4.7,-1.75);
\draw[->-=.5] (-2.375,-3) -- (-3.35,-3.75);

\draw[fill] (1,3.5) circle [radius=.175];  
\draw[fill] (-1,2) circle [radius=.175];  
\draw[fill] (2.25,0) circle [radius=.175];  
\draw[fill] (2.375,-3) circle [radius=.175];  
\draw[fill] (-1,-2) circle [radius=.175];  
\draw[fill] (-3.5,-1) circle [radius=.175];  

\draw[fill, white] (-1,3.5) circle [radius=.175];  
\draw (-1,3.5) circle [radius=.175];  
\draw[fill, white] (1,2) circle [radius=.175];  
\draw (1,2) circle [radius=.175];  
\draw[fill, white] (3.5,-1) circle [radius=.175];  
\draw (3.5,-1) circle [radius=.175];
\draw[fill, white] (1,-2) circle [radius=.175];  
\draw (1,-2) circle [radius=.175];  
\draw[fill, white] (-2.25,0) circle [radius=.175];  
\draw (-2.25,0) circle [radius=.175];  
\draw[fill, white] (-2.375,-3) circle [radius=.175];  
\draw (-2.375,-3) circle [radius=.175];  

\node[above] at (-1,4.9) {6};
\node[above] at (1,4.9) {1};
\node[right] at (4.7,-1.75) {2};
\node[right] at (3.35,-3.75) {3};
\node[left] at (-4.7,-1.75) {5};
\node[left] at (-3.35,-3.75) {4};

\node at (0,0) {\scalebox{.3}{ $\yng(3,2,1)$}};
\node at (0,2.75) {\scalebox{.3}{$\yng(3,1,1)$}};
\node at (0,4.25) {{$\varnothing$}};
\node at (3,1.75) {\scalebox{.3}{$\yng(3)$}};
\node at (2.3,-1.5) {\scalebox{.3}{$\yng(3,2)$}};
\node at (3.7,-2.25) {\scalebox{.3}{$\yng(3,3)$}};
\node at (0,-3.5) {\scalebox{.3}{$\yng(3,3,3)$}};
\node at (-2.3,-1.5) {\scalebox{.3}{$\yng(2,2,1)$}};
\node at (-3.7,-2.25) {\scalebox{.3}{$\yng(2,2,2)$}};
\node at (-3,1.75) {\scalebox{.3}{$\yng(1,1,1)$}};

\end{scope}

\end{tikzpicture}
    \caption{Two plabic graphs for $\Gr_3(\mathbb C^6)$ for which $\init_{M_{\mathcal G}}(I_{3,6})$ is not prime (see \cite[\S9]{RW17}).}
    \label{fig:3-6}
\end{figure}

\begin{example}\label{exp:3-6}
For $\Gr_3(\mathbb C^6)$ we consider the plabic graph $\mathcal G$ on the left in Figure~\ref{fig:3-6} and compute the images of Pl\"ucker coordinates under $\val_{\mathcal G}:A_{3,6}\setminus \{0\}\to \mathbb Z^{9}$ (see Table~\ref{tab:3-6} below).
With respect to the order on variables as indicated in Table~\ref{tab:3-6} we fix the lexicographic order on $\mathbb Z^9$. 

Take the following four-term Pl\"ucker relation written as a sum of two binomials:
\[
(p_{123}p_{456} - p_{124}p_{356}) + (p_{125}p_{346} - p_{126}p_{345}) =: f_1+f_2 
\]
We compute the flow polynomials for $\bar f_1$ and $\bar f_2$ and obtain:
$\val_{\mathcal G}(\bar f_1)=\val_{\mathcal G}(\bar f_2) = (1,2,3,2,1,1,1,3,1)$.
Comparing with the valuations of Pl\"ucker coordinates we deduce $\val_{\mathcal G}(\bar f_i)$ does not lie in the semigroup-span of $\{\val_{\mathcal G}(\bar p_{ijk})\mid 1\le i<j<k\le 6\}$. Hence, by Theorem~\ref{thm: val and quasi val with wt matrix} the associated initial ideal $\init_{M_{\mathcal G}}(I_{3,6})$ is not prime.
Further, computing initial forms of $f_1+f_2$ we see $\init_{{\bf w}_{\mathcal G}}(f_1+f_2)=f_1+f_2$, but $\init_{M_{\mathcal G}}(f_1+f_2) = f_1$.
So, $\init_{M_{\mathcal G}}(I_{3,6})\not = \init_{{\bf w}_{\mathcal G}}(I_{3,6})$.
For the plabic graph on the right side of Figure~\ref{fig:3-6} a similar argument works considering the Pl\"ucker relation $(p_{123}p_{456}-p_{145}p_{236})+(p_{146}p_{235}-p_{156}p_{234})$.
\end{example}

\begin{definition}\label{def:hexagonal}
We say a plabic graph $\mathcal G$ is \emph{hexagonal}, if locally around  six consecutive boundary vertices it is equivalent up to moves (M2) and (M3) to the arrangement depicted on the left in Figure~\ref{fig:hexagon orient}.
\end{definition}

\begin{remark}
We call such plabic graphs hexagonal in alignment with \cite{MoSh} where a criterion for non-prime cones in $\trop(\Gr_3(\mathbb C^n))$ is given in terms of \emph{hexagonal} tropical line arrangement. We believe that a combinatorial algorithm transforming hexagonal plabic graphs for $\Gr_3(\mathbb C^n)$ into hexagonal tropical line arrangements should exist.
\end{remark}

\begin{figure}[h]
 \centering
\begin{tikzpicture}[scale=.35]
\tikzset{->-/.style={decoration={
  markings,
  mark=at position #1 with {\arrow{>}}},postaction={decorate}}}
  \tikzset{-<-/.style={decoration={
  markings,
  mark=at position #1 with {\arrow{<}}},postaction={decorate}}}

\draw (-1,2) -- (1,2);
\draw (1,2) -- (2.25,0);
\draw (2.25,0) -- (1,-2);
\draw (1,-2)-- (-1,-2);
\draw (-1,-2) -- (-2.25,0);
\draw (-2.25,0)-- (-1,2);
\draw (-1,2)  -- (-1,3.5);
\draw (-1,3.5)-- (1,3.5);
\draw (1,3.5) -- (1,2);
\draw (2.25,0) -- (3.5,-1);
\draw (3.5,-1) -- (2.375,-3);
\draw (2.375,-3) -- (1,-2);
\draw (-2.25,0) -- (-3.5,-1);
\draw (-3.5,-1) -- (-2.375,-3);
\draw (-2.375,-3) -- (-1,-2);
\draw (-1.5,5) -- (-1,3.5);
\draw (1,5) -- (1,3.5);
\draw (4.7,-1.75) -- (3.5,-1);
\draw (2.375,-3) -- (3.35,-3.75);
\draw (-3.5,-1) -- (-6.7,-3.75);
\draw (-2.375,-3) -- (-3.35,-3.9);

\draw (-1,3.5) -- (-2,4.25);
\draw (-1,3.5) -- (-2.5,3.5);
\draw (-1,3.5) -- (-2,2.75);
\node at (-4,2.5) {$\dots$};
\draw (-3.5,-1) -- (-4,1);
\draw (-3.5,-1) -- (-4.5,0);
\draw (-3.5,-1) -- (-4.5,-1);

\draw (-5,5) -- (1,5) to  [out=0,in=80]  (4.7,-1.75) to [out=260,in=40] (3.35,-3.75) to [out=190,in=0] (-7.25,-3.75);

\draw[fill] (1,3.5) circle [radius=.175];  
\draw[fill] (-1,2) circle [radius=.175];  
\draw[fill] (2.25,0) circle [radius=.175];  
\draw[fill] (2.375,-3) circle [radius=.175];  
\draw[fill] (-1,-2) circle [radius=.175];  
\draw[fill] (-3.5,-1) circle [radius=.175];  

\draw[fill, white] (-1,3.5) circle [radius=.175];  
\draw (-1,3.5) circle [radius=.175];  
\draw[fill, white] (1,2) circle [radius=.175];  
\draw (1,2) circle [radius=.175];  
\draw[fill, white] (3.5,-1) circle [radius=.175];  
\draw (3.5,-1) circle [radius=.175];
\draw[fill, white] (1,-2) circle [radius=.175];  
\draw (1,-2) circle [radius=.175];  
\draw[fill, white] (-2.25,0) circle [radius=.175];  
\draw (-2.25,0) circle [radius=.175];  
\draw[fill, white] (-2.375,-3) circle [radius=.175];  
\draw (-2.375,-3) circle [radius=.175];  

\begin{scope}[xshift=20cm]
\draw[-<-=.5] (-1,2) -- (1,2);
\draw[->-=.5] (1,2) -- (2.25,0);
\draw[->-=.5] (2.25,0) -- (1,-2);
\draw[->-=.5] (1,-2)-- (-1,-2);
\draw[-<-=.5] (-1,-2) -- (-2.25,0);
\draw[-<-=.5] (-2.25,0)-- (-1,2);
\draw[-<-=.5] (-1,2)  -- (-1,3.5);
\draw[->-=.5] (-1,3.5)-- (1,3.5);
\draw[->-=.5] (1,3.5) -- (1,2);
\draw[-<-=.5] (2.25,0) -- (3.5,-1);
\draw[->-=.7] (3.5,-1) -- (2.375,-3);
\draw[-<-=.5] (2.375,-3) -- (1,-2);
\draw[->-=.5] (-2.25,0) -- (-3.5,-1);
\draw[-<-=.5] (-3.5,-1) -- (-2.375,-3);
\draw[-<-=.5] (-2.375,-3) -- (-1,-2);
\draw[->-=.5] (-1.5,5) -- (-1,3.5);
\draw[->-=.5] (1,5) -- (1,3.5);
\draw[->-=.5] (4.7,-1.75) -- (3.5,-1);
\draw[->-=.5] (2.375,-3) -- (3.35,-3.75);
\draw[->-=.5] (-3.5,-1) -- (-6.7,-3.75);
\draw[->-=.5] (-2.375,-3) -- (-3.35,-3.9);

\draw (-1,3.5) -- (-2,4.25);
\draw (-1,3.5) -- (-2.5,3.5);
\draw (-1,3.5) -- (-2,2.75);
\node at (-4,2.5) {$\dots$};
\draw (-3.5,-1) -- (-4,1);
\draw (-3.5,-1) -- (-4.5,0);
\draw (-3.5,-1) -- (-4.5,-1);

\draw (-5,5) -- (1,5) to  [out=0,in=80]  (4.7,-1.75) to [out=260,in=40] (3.35,-3.75) to [out=190,in=0] (-7.25,-3.75);         

\draw[fill] (1,3.5) circle [radius=.175];  
\draw[fill] (-1,2) circle [radius=.175];  
\draw[fill] (2.25,0) circle [radius=.175];  
\draw[fill] (2.375,-3) circle [radius=.175];  
\draw[fill] (-1,-2) circle [radius=.175];  
\draw[fill] (-3.5,-1) circle [radius=.175];  

\draw[fill, white] (-1,3.5) circle [radius=.175];  
\draw (-1,3.5) circle [radius=.175];  
\draw[fill, white] (1,2) circle [radius=.175];  
\draw (1,2) circle [radius=.175];  
\draw[fill, white] (3.5,-1) circle [radius=.175];  
\draw (3.5,-1) circle [radius=.175];
\draw[fill, white] (1,-2) circle [radius=.175];  
\draw (1,-2) circle [radius=.175];  
\draw[fill, white] (-2.25,0) circle [radius=.175];  
\draw (-2.25,0) circle [radius=.175];  
\draw[fill, white] (-2.375,-3) circle [radius=.175];  
\draw (-2.375,-3) circle [radius=.175];  

\node at (0,0) {\footnotesize{$B$}};
\node at (0,4.25) {\footnotesize{$F_{k-2}$}};
\node at (0,2.75) {\footnotesize{A}};
\node at (3,1.75) {\footnotesize{$F_{k-1}$}};
\node at (2.3,-1.5) {\footnotesize{$C$}};
\node at (3.7,-2.25) {\footnotesize{$F_{k}$}};
\node at (0,-3.5) {\footnotesize{$F_{k+1}$}};
\node at (-2.3,-1.5) {\footnotesize{$D$}};
\node at (-4.25,-3) {\footnotesize{$F_{k+2}$}};

\node[above] at (-1.5,5) {\footnotesize{$k-2$}};    
\node[above] at (1.15,5) {\footnotesize{$k-1$}};
\node[right] at (4.7,-1.75) {\footnotesize{$k$}};
\node[below] at (3.35,-3.75) {\footnotesize{$k+1$}};
\node[below] at (-3.35,-3.85) {\footnotesize{$k+2$}};
\node[below] at (-6.7,-3.75) {\footnotesize{$k+3$}};

\end{scope}
\end{tikzpicture}
    \caption{On the left: the local structure of a hexagonal plabic graph. On the right: a possible labelling with perfect orientation of a hexagonal plabic graph.}
    \label{fig:hexagon orient}
\end{figure}

\begin{theorem}\label{thm:hexa not prime}
Let $\mathcal G$ be a hexagonal plabic graph. Then $\init_{M_{\mathcal G}}(I_{k,n})$ is not prime. Moreover, $\init_{M_{\mathcal G}}(I_{k,n})\not =\init_{{\bf w}_{\mathcal G}}(I_{k,n})$ and $\Delta(A_{k,n},\val_{\mathcal G})$ is not integral.
\end{theorem}

\begin{proof}
We will show that the value semigroup $S(A_{k,n},\val_{\mathcal G})$ associated with the valuation of a plabic graph as on the right side of Figure~\ref{fig:hexagon orient} is not generated by $\{\val_{\mathcal G}(\bar p_J)\mid J\in \binom{[n]}{k}\}$ and how this is enough to deduce the claims of the Theorem.
Assuming this statement is true by the isomorphism of \cite{BCMN} (see \S\ref{sec:RW and GHKK}) the same is true for the valuation ${\bf g}_{\mathcal G}:A_{k,n}\setminus \{0\}\to \mathbb Z^{d}$ sending every Pl\"ucker coordinate to its ${\bf g}$-vector (see \S\ref{sec:RW and GHKK}).
While $\val_{\mathcal G}$ depends on the perfect orientation given to $\mathcal G$, the valuation ${\bf g}_{\mathcal G}$ purely depends on the combinatorial type of $\mathcal G$ (on the associated quiver to be precise). 
If $\mathcal G'$ is a hexagonal plabic graph obtained from $\mathcal G$ by permuting the labelling of the boundary vertices this induces a natural isomorphism between $S(A_{k,n},{\bf g}_{\mathcal G'})\cong S(A_{k,n},{\bf g}_{\mathcal G})$. Combining with \cite{BCMN} we obtain
\[
S(A_{k,n}, \val_{\mathcal G'})\cong S(A_{k,n},{\bf g}_{\mathcal G'})\cong S(A_{k,n},{\bf g}_{\mathcal G})\cong S(A_{k,n}, \val_{\mathcal G}).
\]

So without loss of generality that $\mathcal G$ is perfectly oriented and of form as on the right in Figure~\ref{fig:hexagon orient}. 
Consider the Pl\"ucker relation
\begin{eqnarray*}
f_1+f_2 &:=& (p_{[k]}p_{[k-3]\cup \{k+1,k+2,k+3\}} - p_{[k-1]\cup \{k+1\}}p_{[k-3]\cup \{k,k+2,k+3\}}) \\
&+& (p_{[k-1]\cup\{k+2\}]}p_{[k-3]\cup\{k,k+1,k+3\}} - p_{[k-1]\cup\{k+3\}}p_{[k-3]\cup \{k,k+1,k+2\}}).
\end{eqnarray*}

\underline{\emph{Claim:}} The image of $\bar f_1$ under $\val_{\mathcal G}$ does not lie in the semigroup span of $\{\val_{\mathcal G}(\bar p_J)\mid J\in\binom{[n]}{k}\}$.

\smallskip 
\noindent 
We compute (the lowest degree terms of) the flow polynomials
\begin{eqnarray*}
\bar p_{[k]} &=& 1, \\
\bar p_{[k-1]\cup \{k+1\}} &=& x_{F_k}(1+x_C), \\
\bar p_{[k-1]\cup \{k+2\}} &=& x_C x_{F_k}x_{F_{k+1}}, \\
\bar p_{[k-1]\cup \{k+3\}} &=& x_Cx_{F_k}x_{F_{k+1}}x_{F_{k+2}},\\
\bar p_{[k-3]\cup \{k,k+1,k+2\}} &=& x_Ax_Bx_C^2x_{F_{k-2}}x_{F_{k-1}}^2x_{F_k}^2x_{F_{k+1}},\\
\bar p_{[k-3]\cup \{k,k+1,k+3\}} &=& x_Ax_Bx_C^2x_{F_{k-2}}x_{F_{k-1}}^2x_{F_k}^2x_{F_{k+1}}x_{F_{k+2}}(1+x_D+ {\rm h.o.t.}), \\
\bar p_{[k-3]\cup \{k,k+2,k+3\}} &=& x_Ax_Bx_C^2x_D x_{F_{k-2}}x_{F_{k-1}}^2x_{F_k}^2x_{F_{k+1}}^2x_{F_{k+2}} (1+ {\rm h.o.t.}), \\
\bar p_{[k-3]\cup \{k+1,k+2,k+3\}} &=&  x_Ax_Bx_C^2x_Dx_{F_{k-2}}x_{F_{k-1}}^2x_{F_k}^3x_{F_{k+1}}^2x_{F_{k+2}}.
\end{eqnarray*}

Ordering the variables corresponding to faces of $\mathcal G$ by $A,B,C,D,F_{k-2},F_{k-1},F_{k},F_{k+1},F_{k+2}$ followed by the ones not displayed in Figure~\ref{fig:hexagon orient} we obtain
\begin{eqnarray*}
\val_{\mathcal G}(\bar p_{[k]}\bar p_{[k-3]\cup \{k+1,k+2,k+3\}}) &=& (1,1,2,1,1,2,3,2,1,0,\dots,0) ,\\
\val_{\mathcal G}(\bar p_{[k-1]\cup \{k+1\}}\bar p_{[k-3]\cup \{k,k+2,k+3\}}) &=& (1,1,2,1,1,2,3,2,1,0,\dots,0) ,\\
\val_{\mathcal G}(\bar p_{[k-1]\cup\{k+2\}]}\bar p_{[k-3]\cup\{k,k+1,k+3\}}) &=& (1,1,3,0,1,2,3,2,1,0,\dots,0),\\
\val_{\mathcal G}(\bar p_{[k-1]\cup\{k+3\}}\bar p_{[k-3]\cup \{k,k+1,k+2\}}) &=& (1,1,3,1,1,2,3,2,1,0,\dots,0 ),\\
\val_{\mathcal G}(\bar f_1)=\val_{\mathcal G}(\bar f_2) &=& (1,1,3,1,1,2,3,2,1,0,\dots,0).
\end{eqnarray*}

If $\val_{\mathcal G}(\bar f_1)$ would lie in the semigroup span of the values of Pl\"ucker coordinates,
by Corollary~\ref{cor: col in lin sp} it has to be of the form
\[
\val_{\mathcal G}(\bar f_1)= \val_{\mathcal G}(\bar p_{I_1})+\val_{\mathcal G}(\bar p_{I_2}),
\]
for $I_1,I_2\in \binom{[n]}{k}$ such that $I_1\cap I_2=[k-3]$ and $I_1\cup I_2=[k+3]$.
The values of such Pl\"ucker coordinates resemble the ones in the case of $\Gr_3(\mathbb C^6)$ from Table~\ref{tab:3-6}. This can be seem by making the following identifications:
identify faces of the plabic graph $\mathcal G$ and the one on the left in Figure~\ref{fig:3-6}: 
\[
\scalebox{.25}{\yng(2)}\longleftrightarrow A, \ \ 
\scalebox{.25}{\yng(2,1)} \longleftrightarrow B, \ \ 
\scalebox{.25}{\yng(3,3,1)}\longleftrightarrow C, \ \
\scalebox{.25}{\yng(1,1)} \longleftrightarrow D, \ \  \scalebox{.25}{\yng(3)} \longleftrightarrow F_{k-2},
\dots, \scalebox{.25}{\yng(1,1,1)}\longleftrightarrow F_{k+2},
\]
and identify Pl\"ucker coordinates $\bar p_{i,j,l} \longleftrightarrow \bar p_{[k-3]\cup \{i+k-3,j+k-3,l+k-3\}}$ for $1\le i<j<l\le 6$. As in Example~\ref{exp:3-6} we see that the value $(1,1,3,1,1,2,3,2,1,\dots)$ does not arise as a sum of any two corresponding values in Table~\ref{tab:3-6}, so the claim follows.

Lemma~\ref{lem:gv identity} and Theorem(\cite{BCMN}) stated in \S\ref{sec:RW and GHKK} imply that $M_{\mathcal G}$ is of full rank.
Then by Theorem~\ref{thm: val and quasi val with wt matrix} it follows that $\init_{M_{\mathcal G}}(I_{k,n})$ is not prime. 
Further, we have $\init_{M_{\mathcal G}}(f_1+f_2)=f_1$ and $\init_{{\bf w}_{\mathcal G}}(f_1+f_2)=f_1+f_2$.
Hence, $\init_{M_{\mathcal G}}(I_{k,n})\not =\init_{{\bf w}_{\mathcal G}}(I_{k,n})$.
Lastly, the ray spanned by $\val_{\mathcal G}(\bar f_1)$ gives a vertex of $\Delta(A_{k,n},\val_{\mathcal G})$ that is of form
\[
\left(\frac{1}{2},\frac{1}{2},\frac{3}{2},\frac{1}{2},\frac{1}{2},1,\frac{3}{2},1,\frac{1}{2},0,\dots, 0\right),
\]
which makes $\Delta(A_{k,n},\val_{\mathcal G})$ non-integral.
\end{proof}

\begin{table}[]
    \centering
\begin{tabular}{ll}    
    \begin{tabular}{c|c c c c c c c c c c}
    & \scalebox{.2}{$\yng(3)$} & \scalebox{.2}{$\yng(3,3)$} & \scalebox{.2}{$\yng(3,3,3)$} & \scalebox{.2}{$\yng(2,2,2)$} & \scalebox{.2}{$\yng(1,1,1)$} &  & \scalebox{.2}{$\yng(2)$} & \scalebox{.2}{$\yng(2,1)$} & \scalebox{.2}{$\yng(3,3,1)$} & \scalebox{.2}{$\yng(1,1)$} \\ \hline
    $\bar p_{123} $  & 0 & 0 & 0 & 0 & 0 & & 0 & 0 & 0 & 0 \\
    $\bar p_{124} $  & 0 & 0 & 1 & 0 & 0 & & 0 & 0 & 0 & 0 \\
    $\bar p_{125} $  & 0 & 0 & 1 & 1 & 0 & & 0 & 0 & 1 & 0 \\
    $\bar p_{126} $  & 0 & 0 & 1 & 1 & 1 & & 0 & 0 & 1 & 0 \\
    $\bar p_{134} $  & 0 & 1 & 1 & 0 & 0 & & 0 & 0 & 1 & 0 \\
    $\bar p_{135} $  & 0 & 1 & 1 & 1 & 0 & & 0 & 0 & 1 & 0 \\
    $\bar p_{136} $  & 0 & 1 & 1 & 1 & 1 & & 0 & 0 & 1 & 0 \\
    $\bar p_{145} $  & 0 & 1 & 2 & 1 & 0 & & 0 & 0 & 1 & 0 \\
    $\bar p_{146} $  & 0 & 1 & 2 & 1 & 1 & & 0 & 0 & 1 & 0 \\
    $\bar p_{156} $  & 0 & 1 & 2 & 2 & 0 & & 0 & 1 & 2 & 1 \\
    \end{tabular}
&
    \begin{tabular}{c|c c c c c c c c c c}
     & \scalebox{.2}{$\yng(3)$} & \scalebox{.2}{$\yng(3,3)$} & \scalebox{.2}{$\yng(3,3,3)$} & \scalebox{.2}{$\yng(2,2,2)$} & \scalebox{.2}{$\yng(1,1,1)$} &  & \scalebox{.2}{$\yng(2)$} & \scalebox{.2}{$\yng(2,1)$} & \scalebox{.2}{$\yng(3,3,1)$} & \scalebox{.2}{$\yng(1,1)$} \\ \hline    
    $\bar p_{234} $  & 1 & 1 & 1 & 0 & 0 & & 0 & 0 & 1 & 0 \\
    $\bar p_{235} $  & 1 & 1 & 1 & 1 & 0 & & 0 & 0 & 1 & 0 \\
    $\bar p_{236} $  & 1 & 1 & 1 & 1 & 1 & & 0 & 0 & 1 & 0 \\
    $\bar p_{245} $  & 1 & 1 & 2 & 1 & 0 & & 0 & 0 & 1 & 0 \\
    $\bar p_{246} $  & 1 & 1 & 2 & 1 & 1 & & 0 & 0 & 1 & 0 \\
    $\bar p_{256} $  & 1 & 1 & 2 & 2 & 1 & & 0 & 1 & 2 & 1 \\
    $\bar p_{345} $  & 1 & 2 & 2 & 1 & 0 & & 1 & 1 & 2 & 0 \\
    $\bar p_{346} $  & 1 & 2 & 2 & 1 & 1 & & 1 & 1 & 2 & 0 \\
    $\bar p_{356} $  & 1 & 2 & 2 & 2 & 1 & & 1 & 1 & 2 & 1 \\
    $\bar p_{456} $  & 1 & 2 & 3 & 2 & 1 & & 1 & 1 & 2 & 1 \\
    \end{tabular}
\end{tabular}    
    \caption{The images of Pl\"ucker coordinates under the valuation $\val_{\mathcal G}$ for the plabic graph $\mathcal G$ as on the left in Figure~\ref{fig:3-6}.}
    \label{tab:3-6}
\end{table}

\begin{appendices}
\section{Appendix for \S\ref{sec: exp plabic}}
\subsection{Plabic graphs}\label{sec:plabic}

We review the definition of plabic graphs due to Postnikov \cite{Pos06}. This section is closely oriented towards \cite{RW17} and \cite{BFFHL}.

\begin{definition}\label{def: plabic graph}
A \textit{plabic graph} $\mathcal{G}$ is a planar bicolored graph embedded in a disk (up to homotopy). It has $n$ boundary vertices numbered $1,\dots, n$ in a clockwise order. Boundary vertices lie on the boundary of the disk and are not coloured. Additionally, there are internal vertices coloured black or white. Each boundary vertex is adjacent to a single internal vertex.
\end{definition}

For our purposes we assume that plabic graphs are connected and that every leaf of a plabic graph is a boundary vertex. We first recall the four local moves on plabic graphs.

\begin{itemize}
\item[(M1)] If a plabic graph contains a square of four internal vertices with alternating colours, each of which is trivalent, then the colours can be swapped. So every black vertex in the square becomes white and every white vertex becomes black (see Figure~\ref{fig: plabic move M12}).

\item[(M2)] If two internal vertices of the same colour are connected by an edge, the edge can be contracted and the two vertices can be merged. Conversely, any internal black or white vertex can be split into two adjacent vertices of the same colour (see Figure~\ref{fig: plabic move M12}).

\begin{center}
    \begin{figure}[h]
    \centering
    \begin{tikzpicture}[scale=0.5]
    \draw (0,0) -- (1,1) -- (2,1) -- (3,0);
    \draw (0,3) -- (1,2) -- (2,2) -- (3,3);
    \draw (1,2) -- (1,1);
    \draw (2,2) -- (2,1);

    \draw[->] (3.5,1.5) -- (4.5,1.5);
    \draw[->] (4.5,1.5) -- (3.5,1.5);

    \draw (5,0) -- (6,1) -- (7,1) -- (8,0);
    \draw (5,3) -- (6,2) -- (7,2) -- (8,3);
    \draw (6,2) -- (6,1);
    \draw (7,2) -- (7,1);

    \draw[fill] (1,1) circle [radius=0.1];
    \draw[fill] (2,2) circle [radius=0.1];
    \draw[fill, white] (1,2) circle [radius=0.1];
    \draw[fill, white] (2,1) circle [radius=0.1];
    \draw (1,2) circle [radius=0.1];
    \draw (2,1) circle [radius=0.1];

    \draw[fill] (6,2) circle [radius=0.1];
    \draw[fill] (7,1) circle [radius=0.1];
    \draw[fill, white] (6,1) circle [radius=0.1];
    \draw[fill, white] (7,2) circle [radius=0.1];
    \draw (6,1) circle [radius=0.1];
    \draw (7,2) circle [radius=0.1];
    
    \begin{scope}[xshift=12cm]
     \draw (0,0.5) -- (1,1) -- (0,1.5);
    \draw (1,1) -- (2,1) -- (3,2);
    \draw (3,1) -- (2,1) -- (3,0);

    \draw[->] (3.5,1) -- (4.5,1);
    \draw[->] (4.5,1) -- (3.5,1);

    \draw (5,0.5) -- (6,1) -- (5,1.5);
    \draw (7,2) -- (6,1) -- (7,1);
    \draw (6,1) -- (7,0);

    \draw[fill] (6,1) circle [radius=0.1];
    \draw[fill] (1,1) circle [radius=0.1];
    \draw[fill] (2,1) circle [radius=0.1];

    \end{scope}
    \end{tikzpicture}
    \caption{Square move (M1), and merging vertices of same colour (M2)}
     \label{fig: plabic move M12}
    \end{figure}
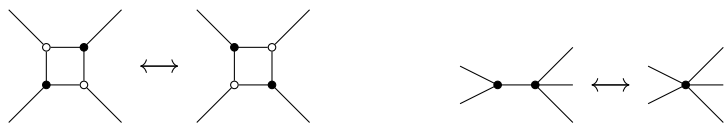
    \end{center}

\vspace{-1cm}
\item[(M3)] If a plabic graph contains an internal vertex of degree $2$, it can be removed. Equivalently, an internal black or white vertex can be inserted in the middle of any edge (see Figure~\ref{fig: plabic move M3R}).

\item[(R)] If two internal vertices of opposite colour are connected by two parallel edges, they can be reduced to only one edge. This can not be done conversely (see Figure~\ref{fig: plabic move M3R}).
\end{itemize}
    \begin{center}
    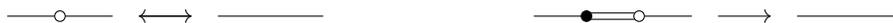
\begin{figure}[h]
    \centering
    \begin{tikzpicture}[scale=0.7]
    \draw (0,0) -- (1,0) -- (2,0);

    \draw[->] (2.5,0) -- (3.5,0);
    \draw[->] (3.5,0) -- (2.5,0);

    \draw (4,0) -- (6,0);

    \draw[fill, white] (1,0) circle [radius=0.1];
    \draw (1,0) circle [radius=0.1];
    
    \begin{scope}[xshift=10cm]
       \draw (0,0) -- (1,0);
    \draw (2,0) -- (3,0);

    \draw (1,0.06) -- (2,0.06);
    \draw (1,-.06) -- (2,-.06);

    \draw[fill] (1,0) circle [radius=0.1];
    \draw[fill, white] (2,0) circle [radius=0.1];
    \draw (2,0) circle [radius=0.1];

    \draw[->] (3.5,0) -- (4.5,0);

    \draw (5,0) -- (7,0);

    \end{scope}
    
    \end{tikzpicture}
    \caption{Insert/remove degree two vertex (M3), and reducing parallel edges (R)}
    \label{fig: plabic move M3R}
    \end{figure}
    \end{center}
\vspace{-.75cm}
The \textit{equivalence class} of a plabic graph $\mathcal G$ is defined as the set of all plabic graphs that can be obtained from $\mathcal G$ by applying (M1)-(M3). 
If in the equivalence class there is no graph to which (R) can be applied, we say $\mathcal G$ is \textit{reduced}. 
From now on we only consider reduced plabic graphs.

\begin{definition}\label{def: trip permutation}
Let $\mathcal G$ be a reduced plabic graph with boundary vertices $v_1,\dots, v_n$ labelled in a clockwise order. We define the \textit{trip permutation} $\pi_{\mathcal G}$ as follows. We start at a boundary vertex $v_i$ and form a path along the edges of $\mathcal G$ by turning maximally right at an internal black vertex and maximally left at an internal white vertex. We end up at a boundary vertex $v_{\pi(i)}$ and define $\pi_{\mathcal G}=[\pi(1),\dots,\pi(n)]\in S_n$.
\end{definition}

It is a fact that plabic graphs in one equivalence class have the same trip permutation. Further, it was proven by Postnikov in \cite[Theorem 13.4]{Pos06} that plabic graphs with the same trip permutation are connected by moves (M1)-(M3) and are therefore equivalent.
Let $\pi_{n-k,n}=(k+1,k+2,\dots,n,1,2,\dots,k)$. From now on we focus on plabic graphs $\mathcal G$ with trip permutation $\pi_{\mathcal G}=\pi_{n-k,n}$. Each path $v_i$ to $v_{\pi_{n-k,n}(i)}$ defined above, divides the disk into two regions. 
We label every face in the region to the left of the path by $i$. After repeating this for every $1\le i\le n$, all faces have a labelling by an $k$-element subset of $[n]$. 
Every such $k$-element subset defines a Young diagram that fits into an $(n-k)\times k$-rectangle of boxes.
We denote by $\mathcal{P_G}$ the set of all such subsets (resp. their associated Young diagrams) for a fixed plabic graph $\mathcal G$.
The cardinality of $\mathcal P_{\G}$ is $d+1$.

A face of a plabic graph is called \emph{internal}, if it does not intersect with the boundary of the disk. Other faces are called \emph{boundary faces}.
Following \cite{RW17} we define an orientation on a plabic graph. 
This is the first step in establishing the \emph{flow model} introduced by Postnikov, which we use to define plabic degrees on the Pl\"ucker coordinates.

\begin{definition}\label{def: perfect orientation}
An orientation $\mathcal O$ of a plabic graph $\mathcal G$ is called \emph{perfect}, if every internal white vertex has exactly one incoming arrow and every internal black vertex has exactly one outgoing arrow. 
The set of boundary vertices that are sources is called the \emph{source set} and is denoted by $I_{\mathcal O}$.
\end{definition}

Postnikov showed in \cite{Pos06} that every reduced plabic graph with trip permutation $\pi_{n-k,n}$ has a perfect orientation with source set of order $k$. See Figure \ref{fig:ExamplePerfectOrientation} for a plabic graph with trip permutation $\pi_{3,5}$.

Index the standard basis of $\mathbb Z^{\mathcal P_\G}=\mathbb Z^{d+1}$ by the faces of the plabic graph $\mathcal G$, where $d=k(n-k)$.
Given a perfect orientation $\mathcal O$ on $\mathcal G$, every directed path $\p$ from a boundary vertex in the source set to a boundary vertex that is a sink, divides the disk in two parts. 
The \emph{weight} $\wei(\p)\in\mathbb Z_{\ge 0}^{\mathcal P_\G}$ has entry $1$ in the position corresponding to a face $F$ of $\mathcal G$, if $F$ is to the left of $\p$ with respect to the orientation.
The \emph{degree} $\deg_{\mathcal G}(\p)\in \mathbb Z_{\ge 0}$ is defined the number of internal faces to the left of the path. 
The boundary face between the boundary vertices $k$ and $k+1$ never lies to the left of any path and therefore is also referred to as $F_{\varnothing}$. 

\begin{definition}\label{def:flow}
For a set of boundary vertices $J$ with $\vert J\vert =\vert I_{\mathcal O}\vert$, we define a \emph{$J$-flow} as a collection of self-avoiding, vertex disjoint directed paths with sources $I_{\mathcal O}-(J\cap I_{\mathcal O})$ and sinks $J-(J\cap I_{\mathcal O})$. 
Let $I_{\mathcal O}-(J\cap I_{\mathcal O})=\{j_1,\dots,j_r\}$ and $\bfff=\{\p_{j_1},\dots,\p_{j_r}\}$ be a flow where each path $\p_{j_i}$ has sink $j_i$.
Then the \emph{weight of the flow} is $\wei(\bfff):=\wei(\p_{j_1})+\dots+\wei(\p_{j_r})$.
Similarly, we define the \emph{degree of the flow} as $\deg_\G(\bfff)=\deg_\G(\p_{j_1})+\dots +\deg_\G(\p_{j_r})$.
By $\mathcal{F}_J$ we denote the set of all $J$-flows in $\mathcal G$ with respect to $\mathcal O$.
\end{definition}

\subsection{Valuation and plabic degree}\label{sec:appendix val}

In \cite{RW17} Rietsch-Williams use the cluster structure on $\Gr_k(\mathbb C^n)$ (due to Scott, see \cite{Sco06}) to define a valuation on $\mathbb C(\Gr_k(\mathbb C^n))\setminus \{0\}$ for every seed.
In fact, a plabic graph $\G$ defines a seed in the corresponding cluster algebra. 
A combinatorial algorithm associates a quiver with $\G$ (see e.g. \cite[Definition~3.8]{RW17}).
The corresponding cluster is a set of Pl\"ucker coordinates $\bar p_J$, where $J$ is a face label in $\G$ as described above.

Let $A_{k,n}:=\mathbb C[\Gr(k,n)]$. 
We recall the definition of the valuation $\val_\G$ from \cite[\S8]{RW17}. 
By \cite[\S6]{Pos06} for every plabic graph $\G$ there exists a map $\Phi_\G:(\mathbb C^*)^{\mathcal P_\G}\to \Gr_k(\mathbb C^n)$ sending $(x_\mu)_{\mu\in\mathcal P_\G}$ to $A\in \mathbb C^{n\times k}$, where
\[
A_{ij}=(-1)^{\#\{ i'\in [k]\vert i<i'<j \}} \sum_{\text{path } \p:i\to j} x^{\wei(\p)} \in \mathbb C[x_\mu]_{\mu\in \mathcal P_\G}.
\]
Here $x^{\wei(\p)}$ denotes the monomial with exponent vector $\wei(\p)\in\mathbb \{0,1\}^{\mathcal P_\G}$.
Fix an order on the coordinates $\{x_\mu\}_{\mu\in \mathcal P_\G}$ and let $\prec$ be the total order on $\mathbb Z^{\mathcal P_\G}$ be the corresponding lexicographic order (see \cite[Definition~8.1]{RW17}).
The pullback of $\Phi_\G$ satisfies $\Phi_\G^*(\bar p_J)\in \mathbb C[x_\mu^{\pm 1}\vert \mu \in \mathcal P_\G]$ for $J\in\binom{[n]}{k}$.
Moreover, every polynomial in Pl\"ucker coordinates has a unique Laurent polynomial expression in $\{x_\mu\}_{\mu\in \mathcal P_\G}$.
Suppose $f\in A_{k,n}$ has expression $\sum_{i=1}^s a_ix^{m_i}$ for $a_i\in \mathbb C$ and $m_i\in \mathbb Z^{\mathcal P_\G}$.
Then the lowest term valuation $\val_\G:\mathbb C(\Gr_k(\mathbb C^n)) \setminus \{0\} \to (\mathbb Z^{\mathcal P_\G},\prec)$ is defined as
\[
\val_\G(f):=\min{}_{\text{lex}} \{ m_i \mid a_i\not =0\}.
\]
For a rational function $h=\frac{f}{g}$ it is defined as $\val_\G(h):=\val_\G(f)-\val_\G(g)$.
For Pl\"ucker coordinates the images of $\val_\G$ can be computed explicitly using the combinatorics of the plabic graph.
For $J\in\binom{[n]}{k}$ let $\bfff_J\in\ff_J$ be the flow with $\deg_\G(\bfff_J)=\min_{\prec}\{\deg_\G(\bfff)\mid \bfff\in\ff_J\}$.
Then on a Pl\"ucker coordinate $\bar p_J\in A_{k,n}$ the valuation $\val_\G$ is given by
\begin{eqnarray}\label{eq:RW val}
\val_\G(\bar p_J)=\wei(\bfff_J) \in \mathbb Z^{\mathcal P_\G}.
\end{eqnarray}
In fact, $\mathcal P_\G$ contains one label $\mu$ whose corresponding face never contributes to the weight of any flow. 
With our convention it is the boundary face between the vertices $k$ and $k+1$, call it $F_\varnothing$.
Therefore, we can omit the corresponding variable and have $\val_\G:A_{k,n}\setminus\{0\}\to \mathbb Z^{\mathcal P_\G-\varnothing}\cong \mathbb Z^d$.
In particular, this implies that the rank of $\val_G$ is $d=\dim(\Gr_n(\mathbb C^n))$ which is one less than the Krull-dimension of $A_{k,n}$.
In order to have a valuation that fits into our framework, we slightly modify $\val_\G$ and define
\begin{eqnarray}\label{eq:valhatG}
\hat\val_\G: A_{k,n}\setminus\{0\}\to \mathbb Z^{d+1} \quad \text{ given by } \quad \hat\val_\G(f)=(\deg f, \val_\G(f)).
\end{eqnarray}
Note that $\hat \val_\G$ is a full-rank valuation on $A_{k,n}$. 
Further, $M_{\hat\val_\G}\in \mathbb Z^{(d+1)\times \binom{n}{k}}$ differs from $M_{\G}\in \mathbb Z^{d\times \binom{n}{k}}$ by the row $(1,\dots,1) \in \mathbb Z^{\binom{n}{k}}$.
As $I_{k,n}$ is homogeneous, seen as a weight vector, we have $(1,\dots,1)\in L_{I_{k,n}}$. 
In particular,
\begin{eqnarray}\label{eq:init same}
\init_{M_\G}(I_{k,n})=\init_{M_{\hat\val_\G}}(I_{k,n}).
\end{eqnarray}

In \cite{BFFHL} they define closely related to the valuation the following notion of degree for Pl\"ucker variables in $\mathbb C[p_J]_{J}$ and associate a weight vector in $\mathbb Z^{\binom{n}{k}}$.
For an example, consider Example~\ref{exp:plabic val} above. 

\begin{definition}\label{def: plabic deg}
For $J\in\binom{[n]}{k}$ and a plabic graph $\mathcal{G}$, the \emph{plabic degree} of the Pl\"ucker variable $p_J$ is defined as
\[
\deg_{\mathcal G}(p_J):=\min\{\deg_{\mathcal G}(\bfff)\mid \bfff\in \mathcal F_J\} \in \mathbb Z_{\ge 0}.
\]
It gives rise to the \emph{plabic weight vector} ${\bf w}_{\mathcal G}\in\mathbb Z^{\binom{n}{k}}$ defined by $({\bf w}_{\mathcal G})_J:=\deg_{\mathcal G}(p_J)$.
\end{definition}

By \cite[Lemma~3.2]{PSW09} and its proof, the plabic degree is independent of the choice of the perfect orientation. We therefore fix the perfect orientation by choosing the source set $I_{\mathcal O}=[k]$. 
The following proposition guarantees that the degree (and the valuation) are well-defined.
It is a reformulation of the original statement adapted to our notion degree.

\begin{proposition*}(\cite[Corollary~12.4]{RW17})\label{prop: unique min flow}
There is a unique $J$-flow in $\mathcal G$ with respect to $\mathcal O$ with degree equal to $\deg_{\mathcal{G}}( p_J)$.
\end{proposition*}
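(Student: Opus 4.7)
The plan is to prove uniqueness by an edge-exchange argument on the symmetric difference of two hypothetical minimum-degree $J$-flows. Fix the perfect orientation $\mathcal{O}$ with source set $I_\mathcal{O}=[k]$, and suppose $\bfff,\bfff'\in \mathcal{F}_J$ both attain the minimum $\deg_\G(p_J)$. My first step is to examine the edge symmetric difference $E(\bfff)\triangle E(\bfff')$. Because both flows satisfy the perfect-orientation constraint at every internal vertex (each internal white vertex has exactly one incoming edge of the flow, each internal black vertex exactly one outgoing edge), and because both have the identical source set $I_\mathcal{O}\setminus(J\cap I_\mathcal{O})$ and sink set $J\setminus(J\cap I_\mathcal{O})$ on the boundary, the net in/out imbalance cancels at every vertex. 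Standard Euler-tour reasoning then shows that $E(\bfff)\triangle E(\bfff')$ decomposes into a disjoint union of oriented cycles $C_1,\dots,C_r$ in the underlying graph of $\G$.

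The second step analyzes the effect of a single ``flip'' along such a cycle $C_i$: replacing the edges $E(\bfff)\cap C_i$ by $E(\bfff')\cap C_i$ produces another valid $J$-flow. Since $\G$ is embedded in a disk, $C_i$ bounds a planar region $R_i$ by the Jordan curve theorem, and by inspecting the ``to-the-left'' convention used to define $\deg_\G$, one checks that the resulting flow's degree differs from $\deg_\G(\bfff)$ by the signed count of interior faces of $\G$ contained in $R_i$, with sign determined by the orientation of $C_i$. Because $\bfff$ and $\bfff'$ both minimize the degree, the degree of every intermediate flow obtained by flipping along any subset of the cycles must also be $\geq \deg_\G(\bfff)$; combined with the additivity of these degree changes over disjoint cycles, this forces the signed interior-face contribution of each individual $C_i$ to vanish.

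The main obstacle, and the step that genuinely uses the reducedness of $\G$ together with the hypothesis that the trip permutation is $\pi_{n-k,n}$, is to show that no nontrivial oriented cycle $C_i$ in $\G$ can bound a region with vanishing signed interior-face count. I would argue this in two parts. First, a cycle enclosing only boundary faces of $\G$ is impossible: such a cycle would exhibit a configuration of the graph that could be simplified by the reduction move (R) or by a combination of (M1)--(M3), contradicting the assumption that $\G$ is reduced. Second, for a cycle enclosing interior faces, the consistency of the perfect orientation and the structure of $\pi_{n-k,n}$ ensure that all interior faces enclosed by $C_i$ contribute with the same sign, so their total cannot be zero unless the enclosed set is empty. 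Putting these together, the symmetric difference $E(\bfff)\triangle E(\bfff')$ must be empty, hence $\bfff=\bfff'$. I expect the second part of this obstacle to be the most delicate, requiring careful bookkeeping of orientations of edges along $C_i$ versus the trip strands of $\G$.
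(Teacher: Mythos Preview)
First, note that the paper does not prove this proposition: it is cited from \cite[Corollary~11.4]{RW17} and stated without argument, so there is no proof in the present paper to compare yours against.

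Regarding your strategy: the symmetric-difference/flip idea is sound, but two points need repair. The more serious gap is in your claim that flipping along a cycle $C_i$ yields another valid $J$-flow. After the flip every internal vertex does have in/out degree in $\{0,1\}$---the perfect-orientation constraint forces the symmetric difference to have degree at most $2$ everywhere, so the $C_i$ are vertex-disjoint simple cycles and the local check goes through---but the resulting subgraph could a priori contain a \emph{directed} cycle in addition to the source--sink paths, and would then fail to be a $J$-flow. You need the perfect orientation $\mathcal{O}$ to be acyclic to rule this out, and you neither invoke nor prove it. The second point is that your ``main obstacle'' is far easier than your two-part sketch suggests, and the appeal to reduction moves and trip strands is unnecessary. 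Since every boundary vertex of $\G$ has degree $1$ (Definition~\ref{def: plabic graph}), no cycle in the symmetric difference can touch the boundary; hence each $C_i$ lies strictly in the interior of the disk and \emph{every} face it encloses is an interior face. Therefore $|\delta_i|\ge 1$ automatically, and combined with $\delta_i\ge 0$ (flip $C_i$ alone) and $\delta_i\le 0$ (flip all cycles except $C_i$, starting from $\bfff'$) you are done---once the flip is known to produce a genuine flow.
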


\subsection{\cite{GHKK14}'s ${\bf g}$-vector valuation vs. \cite{RW17}'s valuation}\label{sec:RW and GHKK}

Gross, Hacking, Keel and Kontsevich construct vector space bases for cluster algebras in \cite{GHKK14}.
Among other powerful applications their so-called \emph{theta basis} can be used to construct toric degenerations for partially compactified cluster varieties.

For the Grassmannian their toric degeneration can be formulated in terms of valuations and Newton-Okounkov bodies. 
They rely on Fomin and Zelevinsky's principal coefficients (introduced in \cite{FZ07}) and the associated multiweights for cluster monomials called \emph{{\bf g}-vectors}. 
Generalized ${\bf g}$-vectors are introduced in \cite[Definition 5.10]{GHKK14} for elements of the theta basis.
For any seed $s$ the assignment of its ${\bf g}$-vector to a theta basis element extends to a full-rank valuation ${\bf g}_{s}: A_{k,n}\setminus \{0\} \to \mathbb Z^{d}$.
For us the following straight forward lemma is important.

\begin{lemma}\label{lem:gv identity}
For a plabic graph $\mathcal G$ consider the set of Pl\"ucker coordinated $\bar p_J$, where $J\in \mathcal P_{\G}-\varnothing$.
Then the matrix with rows ${\bf g}_{\mathcal G}(\bar p_{J})$ is (up to permutation of the rows) the identity matrix.
\end{lemma}

The following result is a direct consequence of the main theorem in \cite{BCMN}:

\begin{theorem*}[\cite{BCMN}]
For every plabic graph $\mathcal G$ there exists a linear map $\bar p^*_{\mathcal G}:\mathbb R^d\to \mathbb R^d$ inducing a unimodular equivalence between
\[
\Delta(A_{k,n},\val_{\mathcal G})\cong \Delta(A_{k,n},{\bf g}_{\mathcal G}).
\]
\end{theorem*}

\begin{example}
We consider the plabic graph $\G^{\rm rec}$ for $\Gr_2(\mathbb C^5)$ as in Figure~\ref{fig:ExamplePerfectOrientation}.
For a lattice $N\cong \mathbb Z^7$ we fix an ordered basis $\{e_{35},e_{25},e_{45},e_{12},e_{23},e_{34}\}$ corresponding to the faces of $\G^{\rm rec}$.
Let $M:=N^*$ be the dual lattice with dual basis (in order) $\{f_{35},f_{25},f_{45},f_{12},f_{23},f_{34}\}$. 
Then $\bar p^*_{\G^{\rm rec}}:N\otimes_{\mathbb Z}\mathbb R\to M\otimes_{\mathbb Z}\mathbb R$ with respect to our chosen bases is given by the matrix
\[
\left(
    \begin{smallmatrix}
    0 & 1 & -1 & 0 & 0 & -1 & 1 \\
    -1 & 0 & 0 & 1 & -1 & 1 & 0 \\
    1 & 0 & 1 & -1 & 0 & 0 & -1 \\
    0 & -1 & 0 & 1 & 0 & 0 & 0 \\
    0 & 1 & 0 & -1 & 1 & -1 & 0 \\
    1 & -1 & 0 & 0 & 0 & 1 & -1 \\
    -1 & 0 & 0 & 0 & 0 & 0 & 1
    \end{smallmatrix}
\right)
\]
The Newton--Okounkov cone $C(A_{2,5},\val_{\G^{\rm rec}})$ is cut out by the tropicalized Marsh-Rietsch superpotential $W=W_1+\dots +W_5$ (defined in \cite{MR_B-model}). 
Expressed in the Pl\"ucker coordinates corresponding to $\G^{\rm rec}$ it is of form
\[
W_1=\frac{\bar p_{34}}{\bar p_{35}} + \frac{\bar p_{23}\bar p_{45}}{\bar p_{35}\bar p_{25}} + \frac{\bar p_{12}\bar p_{45}}{\bar p_{15}\bar p_{25}}, W_2= q\frac{\bar p_{25}}{\bar p_{12}}, W_3= \frac{\bar p_{15}}{\bar p_{25}} + \frac{\bar p_{12}\bar p_{35}}{\bar p_{23}\bar p_{25}}, W_4= \frac{\bar p_{25}}{\bar p_{35}} + \frac{\bar p_{23}\bar p_{45}}{\bar p_{34}\bar p_{35}}, W_5= \frac{\bar p_{35}}{\bar p_{45}}.
\]
The convention to make sense of $\bar p^*$ as written above is $\bar p_{ij}:=z^{f_{ij}}$.
Note that the image of $\val_{\G^{\rm rec}}$ lies inside a hyperplane defined by $e_{45}=0$.
By \cite{RW17} the Newton--Okounkov polytope associated to $\val_{\G^{\rm rec}}:A_{2,5}\setminus\{0\}\to N$ is
\[
\Delta(A_{2,5},\val_{\G^{\rm rec}})= \{W_1^{\trop}\ge 0\}\cap \{W_2^{\trop}\ge -1\} \cap \{W_3^{\trop}\ge 0\} \cap \{W_4^{\trop}\ge 0\} \cap \{W_5^{\trop}\ge 0\}.
\]
For example, $W_5^{\trop}=f_{35}-f_{45}$ is the normal vector for the inequality $e_{35}-e_{45}\ge 0$. 
The lattice points of $\Delta(A_{2,5},\val_{\G^{\rm rec}})$ can be found on the left in Table~\ref{tab:val 2-5}.
The tropicalized Gross-Hacking-Keel-Kontsevich potential $W_{\rm GHKK}=\vartheta_1+\dots+\vartheta_5$ cuts out the Newton--Okounkov cone $C(A_{2,5},{\bf g}_{\G^{\rm rec}})$. 
Expressed in our seed we have
\[
\vartheta_1 = z^{-e_{15}}(1 + z^{-e_{25}}(1 + z^{-e_{35}})), \vartheta_2 = z^{-e_{12}}, \vartheta_3= z^{-e_{23}}(1 + z^{-e_{25}}), \vartheta_4 = z^{-e_{34}}(1 + z^{-e_{25}}), \vartheta_5 = z^{-e_{45}}.
\]
The image of $\Delta(A_{2,5},\val_{\G^{\rm rec}})$ under $\bar p^*_{\G^{\rm rec}}$ is
\[
P= \{\vartheta_1^{\trop}\ge 0\}\cap \{\vartheta_2^{\trop}\ge -1\} \cap \{\vartheta_3^{\trop}\ge 0\} \cap \{\vartheta_4^{\trop}\ge 0\} \cap \{\vartheta_5^{\trop}\ge 0\}.
\]
It is the Newton--Okounkov polytope for a linearly equivalent divisor. 
The Newton--Okounkov polytope $\Delta(A_{2,5},{\bf g}_{\G^{\rm rec}})$ associated to ${\bf g}_{\G^{\rm rec}}:A_{2,5}\setminus \{0\}\to M$ equals $\{W_{\rm GHKK}^{\trop}\le 0\} \cap \{\sum f_{ij}=1\}$.
It is unimodularly equivalent to $P$. 
The polytope $\Delta(A_{2,5},{\bf g}_{\G^{\rm rec}})$ is sent to $P$ by the a linear map $q_{\G^{\rm rec}}:M\otimes_{\mathbb Z}\mathbb R\to M\otimes_{\mathbb Z}\mathbb R$ defined by
\[
\left(\begin{smallmatrix}
1 & 0 & 2 & 0 & 0 & 0 & 0 \\
0 & 1 &-1 & 0 & 0 & 0 & 0 \\
-2&-2 &-2 &-1 & 0 &-1 &-1 \\
0 & 1 & 0 & 0 & 0 & 1 & 0 \\
-1&-1 &-1 & 0 & 0 & 0 & 1 \\
-1& 0 &-1 & 0 & 0 & 0 & 0 \\
1 & 1 & 1 & 1 & 0 & 0 & 0 \\
\end{smallmatrix}\right).
\]

\end{example}
\end{appendices}

\footnotesize


\begin{thebibliography}{BLMM17}

\bibitem[AB04]{AB04}
Valery Alexeev and Michel Brion.
\newblock Toric degenerations of spherical varieties.
\newblock {\em Selecta Math. (N.S.)}, 10(4):453--478, 2004.

\bibitem[And13]{An13}
Dave Anderson.
\newblock Okounkov bodies and toric degenerations.
\newblock {\em Math. Ann.}, 356(3):1183--1202, 2013.

\bibitem[Bay82]{Ba82}
David~A. Bayer.
\newblock {\em The division algorithm and the {H}ilbert scheme}.
\newblock ProQuest LLC, Ann Arbor, MI, 1982.
\newblock Thesis (Ph.D.)--Harvard University.

\bibitem[BCMN19]{BCMN}
L.~Bossinger, M.-W. Cheung, T.~Magee, and A.~{N\'ajera Ch\'avez}.
\newblock On cluster duality for {G}rassmannians.
\newblock {\em In preparation}, 2019.

\bibitem[BFF{\etalchar{+}}18]{BFFHL}
Lara Bossinger, Xin Fang, Ghislain Fourier, Milena Hering, and Martina Lanini.
\newblock Toric degenerations of {${\rm Gr}(2, n)$} and {${\rm Gr}(3, 6)$} via
  plabic graphs.
\newblock {\em Ann. Comb.}, 22(3):491--512, 2018.

\bibitem[BG09]{BG09}
Winfried Bruns and Joseph Gubeladze.
\newblock {\em Polytopes, rings, and {$K$}-theory}.
\newblock Springer Monographs in Mathematics. Springer, Dordrecht, 2009.

\bibitem[BLMM17]{BLMM}
Lara Bossinger, Sara Lamboglia, Kalina Mincheva, and Fatemeh Mohammadi.
\newblock Computing toric degenerations of flag varieties.
\newblock In {\em Combinatorial algebraic geometry}, volume~80 of {\em Fields
  Inst. Commun.}, pages 247--281. Fields Inst. Res. Math. Sci., Toronto, ON,
  2017.

\bibitem[Bos18]{Thesis}
Lara Bossinger.
\newblock {\em Toric degenerations: a bridge between representation theory,
  tropical geometry and cluster algebras}.
\newblock PhD thesis, Universit{\"a}t zu K{\"o}ln, 2018.

\bibitem[Bos21]{B-birat}
Lara Bossinger.
\newblock Birational sequences and the tropical {G}rassmannians.
\newblock {\em J. Algebra}, doi: 10.1016/j.jalgebra.2021.04.028, 2021.

\bibitem[Bri04]{Bri04}
Michel Brion.
\newblock Lectures on the geometry of flag varieties.
\newblock {\em arXiv preprint arXiv:0410240}, 2004.

\bibitem[BZ01]{BZ01}
Arkady Berenstein and Andrei Zelevinsky.
\newblock Tensor product multiplicities, canonical bases and totally positive
  varieties.
\newblock {\em Inventiones mathematicae}, 143(1):77--128, 2001.

\bibitem[Cal02]{Cal02}
Philippe Caldero.
\newblock Toric degenerations of {S}chubert varieties.
\newblock {\em Transform. Groups}, 7(1):51--60, 2002.

\bibitem[CLS11]{CLS11}
David~A. Cox, John~B. Little, and Henry~K. Schenck.
\newblock {\em Toric varieties}.
\newblock American Mathematical Soc., 2011.

\bibitem[Eis95]{Eis13}
David Eisenbud.
\newblock {\em Commutative algebra}, volume 150 of {\em Graduate Texts in
  Mathematics}.
\newblock Springer-Verlag, New York, 1995.
\newblock With a view toward algebraic geometry.

\bibitem[FFL17]{FFL15}
Xin Fang, Ghislain Fourier, and Peter Littelmann.
\newblock Essential bases and toric degenerations arising from birational
  sequences.
\newblock {\em Adv. Math.}, 312:107--149, 2017.

\bibitem[FZ02]{FZ02}
Sergey Fomin and Andrei Zelevinsky.
\newblock Cluster algebras. {I}. {F}oundations.
\newblock {\em J. Amer. Math. Soc.}, 15(2):497--529, 2002.

\bibitem[FZ07]{FZ07}
Sergey Fomin and Andrei Zelevinsky.
\newblock Cluster algebras. {IV}. {C}oefficients.
\newblock {\em Compos. Math.}, 143(1):112--164, 2007.

\bibitem[GHKK18]{GHKK14}
Mark Gross, Paul Hacking, Sean Keel, and Maxim Kontsevich.
\newblock Canonical bases for cluster algebras.
\newblock {\em J. Amer. Math. Soc.}, 31(2):497--608, 2018.

\bibitem[IW18]{IW18}
Nathan Ilten and Milena Wrobel.
\newblock Khovanskii-finite valuations, rational curves, and torus actions.
\newblock {\em arXiv preprint arXiv:1807.08780}, 2018.

\bibitem[Kav15]{Kav15}
Kiumars Kaveh.
\newblock Crystal bases and {N}ewton-{O}kounkov bodies.
\newblock {\em Duke Math. J.}, 164(13):2461--2506, 2015.

\bibitem[KK12]{KK12}
Kiumars Kaveh and Askold~G. Khovanskii.
\newblock Newton-{O}kounkov bodies, semigroups of integral points, graded
  algebras and intersection theory.
\newblock {\em Ann. of Math. (2)}, 176(2):925--978, 2012.

\bibitem[KM19]{KM16}
Kiumars Kaveh and Christopher Manon.
\newblock Khovanskii bases, higher rank valuations, and tropical geometry.
\newblock {\em SIAM J. Appl. Algebra Geom.}, 3(2):292--336, 2019.

\bibitem[LB09]{LB09}
Venkatramani Lakshmibai and Justin Brown.
\newblock {\em Flag varieties}, volume~53 of {\em Texts and Readings in
  Mathematics}.
\newblock Hindustan Book Agency, New Delhi, 2009.
\newblock An interplay of geometry, combinatorics, and representation theory.

\bibitem[Lit98]{Lit98}
P.~Littelmann.
\newblock Cones, crystals, and patterns.
\newblock {\em Transform. Groups}, 3(2):145--179, 1998.

\bibitem[LM09]{LM09}
Robert Lazarsfeld and Mircea Musta\c{t}\u{a}.
\newblock Convex bodies associated to linear series.
\newblock {\em Ann. Sci. \'Ec. Norm. Sup\'er. (4)}, 42(5):783--835, 2009.

\bibitem[MR13]{MR_B-model}
Robert {Marsh} and Konstanze {Rietsch}.
\newblock The {B}-model connection and mirror symmetry for {G}rassmannians.
\newblock {\em arXiv preprint arXiv:1307.1085}, 2013.

\bibitem[MS05]{MS05}
Ezra Miller and Bernd Sturmfels.
\newblock {\em Combinatorial commutative algebra}, volume 227 of {\em Graduate
  Texts in Mathematics}.
\newblock Springer-Verlag, New York, 2005.

\bibitem[MS15]{M-S}
Diane Maclagan and Bernd Sturmfels.
\newblock {\em Introduction to tropical geometry}, volume 161 of {\em Graduate
  Studies in Mathematics}.
\newblock American Mathematical Society, Providence, RI, 2015.

\bibitem[MS18]{MoSh}
Fatemeh Mohammadi and Kristin Shaw.
\newblock Toric degenerations of {G}rassmannians from matching fields.
\newblock {\em Algebr. Comb.} 2, 2019.


\bibitem[Pos06]{Pos06}
Alexander Postnikov.
\newblock Total positivity, {G}rassmannians, and networks.
\newblock {\em arXiv preprint arXiv:math/0609764}, 2006.

\bibitem[PSW09]{PSW09}
Alexander Postnikov, David Speyer, and Lauren Williams.
\newblock Matching polytopes, toric geometry, and the totally non-negative
  {G}rassmannian.
\newblock {\em J. Algebraic Combin.}, 30(2):173--191, 2009.

\bibitem[RW19]{RW17}
K.~Rietsch and L.~Williams.
\newblock Newton--{O}kounkov bodies, cluster duality, and mirror symmetry for
  {G}rassmannians.
\newblock {\em Duke Math. J.}, 168(18):3437--3527, 2019.

\bibitem[Sco06]{Sco06}
Joshua~S. Scott.
\newblock Grassmannians and cluster algebras.
\newblock {\em Proc. London Math. Soc. (3)}, 92(2):345--380, 2006.

\bibitem[SS04]{SS04}
David Speyer and Bernd Sturmfels.
\newblock The tropical {G}rassmannian.
\newblock {\em Adv. Geom.}, 4(3):389--411, 2004.

\bibitem[Tei03]{T03}
Bernard Teissier.
\newblock Valuations, deformations, and toric geometry.
\newblock In {\em Valuation theory and its applications, {V}ol. {II}
  ({S}askatoon, {SK}, 1999)}, volume~33 of {\em Fields Inst. Commun.}, pages
  361--459. Amer. Math. Soc., Providence, RI, 2003.

\bibitem[VP89]{PV89}
\`Ernest~B. Vinberg and Vladimir~L. Popov.
\newblock Invariant theory.
\newblock In {\em Algebraic geometry, 4 ({R}ussian)}, Itogi Nauki i Tekhniki,
  pages 137--314, 315. Akad. Nauk SSSR, Vsesoyuz. Inst. Nauchn. i Tekhn.
  Inform., Moscow, 1989.

\end{thebibliography}

\newcommand{\etalchar}[1]{$^{#1}$}

\normalsize
\bigskip

\textsc{Instituto de Matem\'aticas UNAM Unidad Oaxaca, 
Antonio de Le\'on 2, altos, Col. Centro, 
Oaxaca de Ju\'arez, CP. 68000,
Oaxaca, M\'exico}

\emph{E-mail address:} \texttt{lara@im.unam.mx}

\end{document}